\documentclass[a4paper,12pt,draft]{amsart}

\usepackage[margin=30mm]{geometry}
\usepackage{amssymb, amsmath, amsthm, amscd}

\usepackage{eucal,mathrsfs,dsfont}
\usepackage{color}


\renewcommand{\leq}{\leqslant}
\renewcommand{\geq}{\geqslant}
\renewcommand{\le}{\leqslant}
\renewcommand{\ge}{\geqslant}

\definecolor{mno}{rgb}{0.5,0.1,0.5}

\newcommand{\R}{\mathbb R}
\newcommand{\Rd}{{\mathbb R^d}}
\newcommand{\Pp}{{\bf P}}
\newcommand{\Ee}{{\bf E}}

\newcommand{\I}{\mathds 1}

\newcommand{\eps}{\varepsilon}
\newcommand{\dlt}{{(\delta)}}
\newcommand{\dltp}{{(\delta')}}

\newtheorem{theorem}{Theorem}[section]
\newtheorem{lemma}[theorem]{Lemma}
\newtheorem{proposition}[theorem]{Proposition}

\numberwithin{equation}{section}
\newtheorem{question}{Question}[section]

\theoremstyle{definition}

\newtheorem{example}[theorem]{Example}
\newtheorem{remark}[theorem]{Remark}
\newtheorem{assumption}[theorem]{Assumption}

\begin{document}
\allowdisplaybreaks
\title[LILs for symmetric jump processes] {\bfseries
Laws of the iterated logarithm for symmetric jump processes}
\author{Panki Kim \quad Takashi Kumagai
\quad Jian Wang}
\thanks{The research of Panki Kim is supported by
 the National Research Foundation of Korea(NRF) grant funded by the Korea government(MSIP)
(No. NRF-2015R1A4A1041675). \
 The research of Takashi Kumagai is partially supported
by the Grant-in-Aid for Scientific Research (A) 25247007, Japan.\ The research of Jian Wang is supported by National
Natural Science Foundation of China (No.\ 11201073 and 11522106), the JSPS postdoctoral fellowship
(26$\cdot$04021), and the Program for Nonlinear Analysis and Its Applications (No. IRTL1206)}

\date{}
\maketitle

\begin{abstract} Based on two-sided heat kernel estimates for a class of symmetric jump processes on metric measure spaces, the laws of the iterated logarithm (LILs) for sample paths, local times and ranges are established. In particular, the LILs are obtained for $\beta$-stable-like processes on $\alpha$-sets with $\beta>0$.


\noindent
\textbf{Keywords:} Symmetric jump processes;  law of the iterated logarithm; sample path; local time; range; stable-like process
\medskip

\noindent \textbf{MSC 2010:}
 60G52; 60J25; 60J55;  60J35; 60J75.
\end{abstract}
\allowdisplaybreaks

\section{Introduction and Setting}\label{section1}

The law of the iterated logarithm (LIL) describes the magnitude of the fluctuations of stochastic processes. The original statement of LIL for a random walk is due to  Khinchin in \cite{Kh}.
In this paper we discuss various types of the LILs for a large class of symmetric jump processes.

We first recall some known results on LILs of  stable processes, which are related to the topics of our paper.
Let $X:=(X_t)_{t\ge0}$ be a strictly $\beta$-stable process on ${\mathbb R}$ in the sense of Sato \cite[Definition 13.1]{sato} with $0<\beta<2$ and
$\nu((0,\infty))> 0$ for the L\'evy measure $\nu$ of $X$.
Then the following facts are well-known (see \cite[Propositions 47.16 and 47.21]{sato}).
\begin{proposition}\label{strstabq}
$(1)$ Let $h$ be a positive continuous and increasing function on $(0,\delta]$ for some $\delta>0$. Then
\[
\limsup_{t\to 0}\frac {|X_t|}{h(t)}= 0\quad\mbox{ a.s. ~~or }~ =\infty\quad \mbox{ a.s.}
\]
according to $\int_0^\delta h(t)^{-\beta}dt <\infty$ or $=\infty$, respectively.\\
$(2)$ Assume that
$X$  is not a subordinator.
Then there exists a constant $c\in (0,\infty)$ such that
\[
\liminf_{t\to 0} \frac{\sup_{0<s\le t} |X_s|}{(t/\log|\log t|)^{1/\beta}}
=c\quad\mbox{ a.s.}.
\]
\end{proposition}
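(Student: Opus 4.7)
The plan is to exploit two structural features of a strictly $\beta$-stable process. First, the scaling identity $X_{ct}\stackrel{d}{=}c^{1/\beta}X_t$, which combined with the assumption $\nu((0,\infty))>0$ and strict stability gives the power-law tail $\Pp(|X_1|>u)\asymp u^{-\beta}$ as $u\to\infty$. Second, Blumenthal's $0$-$1$ law: both $\limsup_{t\to 0}|X_t|/h(t)$ and $\liminf_{t\to 0}M_t/(t/\log|\log t|)^{1/\beta}$, with $M_t:=\sup_{0<s\le t}|X_s|$, are $\mathcal{F}_{0+}$-measurable and hence a.s.\ deterministic. The remaining work is then Borel--Cantelli along a geometric time grid $t_n=\rho^n$ for some $\rho\in(0,1)$.

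For Part (1), assume first $\int_0^\delta h(t)^{-\beta}\,dt<\infty$. A maximal inequality for stable processes together with scaling and the tail estimate gives
\[
\Pp\bigl(\sup_{s\le t_n}|X_s|>h(t_{n+1})\bigr)\le C\,\Pp\bigl(|X_{t_n}|>c\,h(t_{n+1})\bigr)\asymp t_n\,h(t_{n+1})^{-\beta},
\]
and $\sum_n t_n h(t_{n+1})^{-\beta}$ is comparable to the convergent integral because $h$ is monotone. Borel--Cantelli plus interpolation $|X_t|/h(t)\le |X_t|/h(t_{n+1})$ for $t\in[t_{n+1},t_n]$ yields the $0$ conclusion. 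If instead the integral diverges, the independent increments $Y_n:=X_{t_n}-X_{t_{n+1}}\stackrel{d}{=}(t_n-t_{n+1})^{1/\beta}X_1$ satisfy $\sum_n\Pp(|Y_n|>2h(t_n))=\infty$ by the tail estimate, and the second Borel--Cantelli lemma gives $|X_{t_n}|\ge h(t_n)$ infinitely often, hence $\limsup=\infty$. The dichotomy is then upgraded to the full $0/\infty$ conclusion via Blumenthal.

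For Part (2), the central analytic input is the small-ball estimate
\[
-\log\Pp(M_1\le\lambda)\asymp\lambda^{-\beta}\qquad\text{as }\lambda\downarrow 0,
\]
valid precisely because $X$ is not a subordinator, so jumps and oscillations of both signs force $M_1$ away from $0$. Scaling converts this into
\[
\Pp\bigl(M_t\le a\,(t/\log|\log t|)^{1/\beta}\bigr)=\Pp\bigl(M_1\le a\,(\log|\log t|)^{-1/\beta}\bigr)\asymp\exp\bigl(-c_1 a^{-\beta}\log|\log t|\bigr),
\]
which along $t_n=e^{-n}$ is of order $n^{-c_1/a^\beta}$. Applied to the \emph{independent} events $\{\sup_{t_{n+1}\le s\le t_n}|X_s-X_{t_{n+1}}|\le a(t_n/\log|\log t_n|)^{1/\beta}\}$ obtained by the Markov property at the grid points, a standard Borel--Cantelli dichotomy separates summable from non-summable at a critical value $a=c\in(0,\infty)$; combined with the $0$-$1$ law this identifies the liminf with $c$ a.s.

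The main obstacle is the lower bound $\Pp(M_1\le\lambda)\ge\exp(-c_2\lambda^{-\beta})$ in the small-ball estimate. The upper bound is routine: partition $[0,1]$ into $\lfloor\lambda^{-\beta}\rfloor$ subintervals and use the L\'evy measure to bound below the chance that one of them contains a jump of size $\ge 2\lambda$. The lower bound is subtler: truncate the Poisson point process of jumps at level $\lambda$---which costs exactly the factor $e^{-c\lambda^{-\beta}}$ from the L\'evy intensity of jumps above $\lambda$---and then show via a Doob/reflection argument on the compensated small-jump martingale that the truncated process stays inside $[-\lambda,\lambda]$ with probability bounded away from zero, using the $L^2$-estimate on small jumps and Kolmogorov's inequality. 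Once this two-sided small-ball bound is secured, the rest of the proposition is bookkeeping with scaling and Borel--Cantelli.
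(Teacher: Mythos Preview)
The paper does not actually prove Proposition~\ref{strstabq}: it is quoted as a classical fact with references to Khinchin, Taylor and Sato. What the paper \emph{does} prove are far-reaching generalisations (Theorems~\ref{sup-1}, \ref{sup-3} and \ref{inf1}) on metric measure spaces, from which the stable case can be recovered. So the relevant comparison is between your L\'evy-specific sketch and those general arguments.

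For Part~(1) your approach is correct and close in spirit to Theorems~\ref{sup-1} and \ref{sup-3}. You exploit independence of increments and exact scaling---tools unavailable in the paper's setting---so your divergent-integral half is cleaner: the second Borel--Cantelli lemma applies directly to the independent $Y_n$. The paper, lacking independence, must instead use the generalised Borel--Cantelli lemma (Lemma~\ref{bc}) with a correlation bound coming from two-sided heat kernel estimates. Both routes work; yours is more elementary but special to L\'evy processes.

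For Part~(2) there is a genuine gap in your upper bound for the $\liminf$. With the grid $t_n=e^{-n}$ you correctly identify independent events $E_n=\{\sup_{t_{n+1}\le s\le t_n}|X_s-X_{t_{n+1}}|\le a\psi(t_n)\}$ and show they occur infinitely often when $a$ is large. But on $E_n$ you only get $M_{t_n}\le M_{t_{n+1}}+a\psi(t_n)$, and $M_{t_{n+1}}$ is \emph{not} negligible relative to $\psi(t_n)$: since $\psi(t_{n+1})/\psi(t_n)\to e^{-1/\beta}$, the ``tail'' term is of the same order as the main term, and no bound on $\liminf M_t/\psi(t)$ follows. The paper's proof of Theorem~\ref{inf1} resolves this by working on the super-geometric scale $\phi(a_k)=e^{-k^2}$, so that the accumulated tail $\sigma_k=\sum_{i>k}u_i$ is genuinely small compared to $u_k$; it then separates the contribution of the tail via auxiliary events $H_k$ (controlled by the exit-time estimate \eqref{exit}) and the independent pieces $G_k$ (controlled by Proposition~\ref{lemma-inf}), and bounds $\Pp^x(\bigcap_{k=m}^{2m}D_k)$ by $c\exp(-m^{1/4})$. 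Your sketch would be repaired by switching to such a rapidly decreasing grid and adding this tail-control step; the phrase ``a standard Borel--Cantelli dichotomy separates summable from non-summable at a critical value $a=c$'' glosses over exactly the point where the argument is delicate.
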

Proposition \ref{strstabq}(1) was obtained by Khinchin in
\cite{Kh1}. A multidimensional version of Proposition
\ref{strstabq}(2) was first proved by Taylor in \cite{T}, and then a
refined version of Proposition \ref{strstabq}(2) for (non-symmetric)
L\'evy processes was established by Wee in \cite{Wee1}. We refer the
reader to \cite{ADS, BM, BMM, Sav} and the references therein.
Recently the results in Proposition \ref{strstabq} have been
extended to some class of Feller processes (see \cite{KS} and the
references therein).

When $\beta>1$, a local time of $X$  exists, and various LILs for
the local time are known. In the next result we concentrate on a
symmetric $\beta$-stable process $X$ on ${\mathbb R}$.
\begin{proposition}\label{strstabq2}
Assume $\beta \in (1,2)$. Then, there exist a local time $\{l(x,t): x\in {\mathbb R}, t>0\}$
and constants $c_1,c_2\in(0,\infty)$ such that
\begin{align}\label{slllil1}
\limsup_{t\to\infty} \frac{\sup_y l(y,t)}{t^{1-1/\beta}
(\log\log t)^{1/\beta}}=c_1\quad \mbox{ a.s.}
\end{align}
and
\begin{align}\label{slllil2}
\liminf_{t\to\infty} \frac{\sup_y l(y,t)}{t^{1-1/\beta}
(\log\log t)^{-1+1/\beta}}
=c_2\quad \mbox{ a.s.}.
\end{align}
\end{proposition}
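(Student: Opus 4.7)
The result combines a classical moment-and-scaling argument for the $\limsup$ (Kesten-type) with a Donsker--Varadhan large-deviation argument for the $\liminf$ (Chung-type). I would proceed in three steps.

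\textbf{Step 1 (existence and scaling).} For $\beta\in(1,2)$ the symmetric $\beta$-stable process on $\R$ is recurrent with a bounded, jointly continuous $1$-resolvent density, so the usual Blumenthal--Getoor construction yields a jointly continuous local time $\{l(y,t)\}$ satisfying the occupation-time formula $\int f(y)\,l(y,t)\,dy=\int_0^t f(X_s)\,ds$. The self-similarity $X_{ct}\stackrel{d}{=}c^{1/\beta}X_t$ induces the random-field identity $\{l(y,ct)\}_{y\in\R}\stackrel{d}{=}c^{1-1/\beta}\{l(c^{-1/\beta}y,t)\}_{y\in\R}$; in particular $S_t:=\sup_y l(y,t)\stackrel{d}{=}t^{1-1/\beta}S_1$ for every fixed $t$. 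All subsequent tail estimates reduce, after scaling, to tail estimates for $S_1$.

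\textbf{Step 2 (upper LIL \eqref{slllil1}).} The plan is to obtain two-sided bounds
\[
c_1\exp(-c_2\lambda^{\beta})\le\Pp(S_1\ge\lambda)\le c_3\exp(-c_4\lambda^{\beta}),\qquad\lambda\ge 1,
\]
via Kac's moment formula applied to $l(y,1)$ together with uniform boundedness of the $1$-resolvent density; the lower tail can be produced by a chaining / independent-pieces argument exploiting the Markov property. Combined with the scaling of Step 1 one gets $\Pp(S_t\ge Kt^{1-1/\beta}(\log\log t)^{1/\beta})\asymp(\log t)^{-\tilde cK^{\beta}}$, and a Borel--Cantelli argument along a geometric subsequence $t_n=2^n$, combined with the monotonicity of $t\mapsto S_t$, yields the upper bound for the $\limsup$. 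The matching lower bound uses the strong Markov property along a sparser sequence to produce enough independence for the second Borel--Cantelli lemma applied to the lower tail.

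\textbf{Step 3 (lower LIL \eqref{slllil2}).} This is the heart of the proof. The key input is a lower-tail estimate of the form
\[
\Pp(S_t\le M)\asymp\exp\!\bigl(-c\,M^{\beta}\,t^{1-\beta}\bigr),
\]
which comes from the Donsker--Varadhan large deviation principle applied to the empirical occupation measure $\mu_t:=t^{-1}\int_0^t\delta_{X_s}\,ds$: since $S_t=t\|\mu_t\|_\infty$, the relevant variational problem is $\inf\{\mathcal E(\sqrt f,\sqrt f):\int f\,dx=1,\ \|f\|_\infty\le a\}$, and this infimum is comparable to the principal Dirichlet eigenvalue of the fractional Laplacian $(-\Delta)^{\beta/2}$ on an interval of length $\asymp 1/a$, hence of order $a^{\beta}$. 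Substituting $M=ct^{1-1/\beta}(\log\log t)^{-1+1/\beta}$ gives $\Pp(S_t\le M)\asymp(\log t)^{-c'}$, and a Borel--Cantelli argument along $t_n=e^n$, together with the strong Markov property for near-independence, produces both directions of the $\liminf$.

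\textbf{Main obstacle.} Step 2 is routine once the two-sided exponential tails for $S_1$ are in hand. The technical heart is Step 3: both the LDP upper bound, yielding the precise rate $\asymp a^{\beta}$, and the LDP lower bound, requiring an explicit trial density built from the fractional Dirichlet ground state on an interval, are delicate. The non-locality of $\mathcal E$ makes sharp cutoffs awkward when $\beta>1$ (boundary contributions of the form $\int_{-R}^R (R-x)^{-\beta}dx$ diverge), so smoothed trial functions must be used, and matching the constants $c_1,c_2$ sharply is where the real work lies.
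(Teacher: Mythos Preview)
Your proposal has a genuine gap in Step~2 and a mis-attribution of methods between Steps~2 and~3. Kac's moment formula gives only $\Ee^y[l(x,1)^n]\le n!\,c^n$, hence an exponential tail $\Pp(l(x,1)\ge\lambda)\le Ce^{-c\lambda}$; chaining to the supremum does not improve the exponent. With a tail of order $e^{-c\lambda}$ your Borel--Cantelli argument along $t_n=2^n$ yields only $\limsup S_t/(t^{1-1/\beta}\log\log t)\le C$, which is the wrong normalisation --- off by a factor $(\log\log t)^{1-1/\beta}$. The sharp tail $\Pp(S_1\ge\lambda)\asymp e^{-c\lambda^\beta}$ does hold, but it is exactly what Donsker--Varadhan's large-deviation theory produces, and historically this is how the \emph{limsup} \eqref{slllil1} was proved; you have placed the LDP on the wrong side. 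Conversely, your Step~3 formula $\Pp(S_t\le M)\asymp\exp(-cM^\beta t^{1-\beta})$ cannot be right: your own variational problem $\inf\{\mathcal E(\sqrt f,\sqrt f):\|f\|_\infty\le a\}$ tends to~$0$ as $a\to 0$ (spread-out densities have small Dirichlet energy), so the DV rate vanishes and the event $\{S_t\le M\}$ for small $M$ is \emph{not} a DV large deviation. If you substitute $M=Kt^{1-1/\beta}(\log\log t)^{-(1-1/\beta)}$ into your formula the exponent goes to~$0$, which would force $\liminf=0$.

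The paper (treating the stable case as a special instance of Theorems~\ref{local-1} and~\ref{LILinfL}) takes an entirely different route that avoids both self-similarity and the LDP. For the $\limsup$ upper bound it uses only the \emph{polynomial} tail $\Pp(L^*(t)\ge b\,t^{1-1/\beta})\le c_1b^{-c_2}$ obtained from Kac plus H\"older continuity of local time, and then invokes a subadditive LIL (Proposition~\ref{U-LIL}): because $t\mapsto L^*(t)$ is subadditive under time-shifts, a weak tail suffices for the correct normalisation. The $\limsup$ lower bound and the $\liminf$ lower bound are both obtained from the elementary duality $t=\int l(y,t)\,\mu(dy)\le L^*(t)\,R(t)$ together with the Chung-type LIL for $\sup_{s\le t}d(X_s,x)$ (Theorem~\ref{inf2}), which controls the range $R(t)$. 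The delicate $\liminf$ upper bound is handled not by LDP but by a direct chain-of-balls construction: one builds events on which the process hops successively through a sequence of disjoint balls, in each of which $L^*$ stays moderate; the probability of each step is bounded below uniformly, giving a product estimate of order $(\log t)^{-1/4}$. This ``bare-hands'' approach is what allows the paper to drop self-similarity and treat general stable-like processes on metric measure spaces.
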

 In \cite{Gr}   Griffin showed that  \eqref{slllil2} holds, and in \cite{Wee}  Wee has extended
 \eqref{slllil2}
 to a large class of L\'evy processes.
As applications of the large deviation method, \eqref{slllil1} was
proved by Donsker and Varadhan in \cite{DV}. For the case of
diffusions, LILs for the local time have further considered on
metric measure spaces including fractals based on the large
deviation technique (see \cite{fst, BK}); however, the corresponding
work for (non-L\'evy) jump processes is still not available. It
would be very interesting to see to what extent the above results
for L\'evy processes are still true for general jump processes,
e.g.\ see \cite[p.\ 306]{Xi}. Thus, we are concerned with the
following;
\begin{question}
If the generator of the process $X$ is perturbed so that the corresponding process with new generator is no longer a L\'evy process, do
the results in Propositions $\ref{strstabq}$ and $\ref{strstabq2}$  still hold$?$
\end{question}
In this paper, we consider this problem for a large class of symmetric Markov jump processes on metric measure spaces via heat kernel estimates.

\medskip

In order to explain our results
explicitly, let us first give the framework.
Let $(M,d)$ be a locally compact, separable and connected metric space, and let $\mu$ be a Radon measure on $M$ with full support.
We assume that $B(x,r)$ is relatively compact for all $x\in M$ and $r>0$.
Let $(\mathscr{E}, \mathscr{F})$ be a symmetric regular Dirichlet form on $L^2(M,\mu)$.  By the Beurling-Deny formula, such form can be decomposed
into three terms --- the strongly local term, the pure-jump term and the killing term (see \cite[Theorem 4.5.2]{FOT}).
Throughout this paper, we consider the form that consists of the pure-jump term only; namely
there exists a symmetric Radon measure $n(\cdot,\cdot)$ on $M\times M\setminus \textrm{diag}$,
 where \textrm{diag} denotes the diagonal set $\{(x, x): x\in M\}$,  such that
 \begin{equation}\label{df}\begin{split}\mathscr{E}(u,v)=&\int_{M\times M\setminus \textrm{diag}}\!\!(u(x)-u(y))(v(x)-v(y))\, n(dx,dy)\end{split}\end{equation} for all $u,v\in \mathscr{F}\cap C_c(M)$.
We denote the associated Hunt process by $X=(X_t,t\ge0; \Pp^x, x\in M; \mathscr{F}_t, t\ge0)$.
Then there is a properly exceptional set $\mathscr{N}\subset M$ such that the associated Hunt process is uniquely determined up to any starting point outside $\mathscr{N}$.
Let $(P_t)_{t\ge0}$ be the semigroup corresponding to $(\mathscr{E}, \mathscr{F})$,
and set $\R_+=(0, \infty)$. A heat kernel (a transition density) of $X$ is a non-negative symmetric measurable function $p(t,x,y)$ defined on $\R_+\times M\times M$ such that
$$P_tf(x)=\int_M p(t,x,z)f(z)\,\mu(dz),~\,\,\,~p(t+s,x,y)=\int_M p(t,x,z)p(s,z,y)\,\mu(dz),$$
for any Borel function $f$ on $M$, for all $s,t>0$,
all $x \in M \setminus \mathscr{N}$
 and $\mu$-almost all $y\in M$.
\medskip

We will use ``$:=$" to denote a
definition, which is read as ``is defined to be".
For $a, b\in {\mathbb R}$,
$a\wedge b:=\min \{a, b\}$ and $a\vee b:=\max\{a, b\}$.
The following is our main theorem for the case of
$\beta$-stable like processes on $\alpha$-sets.

\begin{theorem}\label{stable-like}
{\bf[$\beta$-stable-like processes on $\alpha$-sets]}
Let $(M,d,\mu)$ be as above.
Consider a symmetric regular Dirichlet form $(\mathscr{E}, \mathscr{F})$ on $L^2(M,\mu)$ that has
the transition density function $p(t,x,y)$.
We assume $\mu$ and $p(t,x,y)$ satisfy that
\begin{itemize}
 \item[(i)] there is a constant $\alpha>0$ such that \begin{equation}\label{eq:d-set}  c_1r^\alpha\le \mu(B(x,r))\le c_2r^\alpha,\quad x\in M, r>0,\end{equation}
  \item[(ii)] there also exists a constant $\beta>0$ such that for all $x,y\in M$ and $t>0$,
 \begin{align}\label{e:hke}
 c_3\left( t^{-\alpha/\beta}\wedge \frac{t}{d(x,y)^{\alpha+\beta}} \right)\le p(t,x,y)\le c_4 \left( t^{-\alpha/\beta}\wedge \frac{t}{d(x,y)^{\alpha+\beta}} \right).
 \end{align}
 \end{itemize}

Then, we have the following statements.
 \begin{itemize}
\item[(1)] If $\varphi$ is a strictly increasing function  on $(0,1)$ satisfying
\begin{equation}\label{eq:int01}
\int_0^1 \frac{1}{\varphi(s)^\beta} \,ds <\infty \quad \textrm{(resp.} =\infty\textrm{)},
\end{equation}
then
\begin{equation}\label{eq:int-conc}\limsup_{t\to0} \frac{\sup_{0<s\le t} d(X_s,x)}{\varphi(t)}=0\quad \textrm{(resp.} =\infty\textrm{)},~~~\quad\,
\Pp^x\mbox{-a.e. } \omega,~~\forall x\in M.
\end{equation}
Similarly, if $\varphi$ is defined on $(1,\infty)$ and the integral in \eqref{eq:int01} is
over $[1,\infty)$, then \eqref{eq:int-conc} holds for $t\to \infty$ instead of $t\to 0$.

\item[(2)] There exist constants $c_5,c_6\in(0,\infty)$ such that  for
all $x\in M$ and $\Pp^x\mbox{-a.e.}$,
\begin{align*}\liminf_{t\to0} \frac{\sup_{0<s\le t} d(X_s,x)}{(t/\log|\log t|)^{1/\beta}}=c_5,~~\,\,
\liminf_{t\to \infty} \frac{\sup_{0<s\le t} d(X_s,x)}{(t/\log\log t)^{1/\beta}}=c_6.
\end{align*}
\item[(3)] Assume $\alpha<\beta$. Then, there exist a local time $\{l(x,t): x\in M, t>0\}$
and constants $c_7,c_8,c_9,c_{10}\in(0,\infty)$ such that for all
$x\in M$ and $\Pp^x\mbox{-a.e.}$,
\begin{align*}
\limsup_{t\to\infty} \frac{
\sup_y l(y,t)}{t^{1-\alpha/\beta}
(\log\log t)^{\alpha/\beta}}=c_7, ~~\,\,
\liminf_{t\to\infty} \frac{
\sup_y l(y,t)
}{t^{1-\alpha/\beta}
(\log\log t)^{-1+\alpha/\beta}}
=c_8,\\
~~~~~~~~\,\,\,\,~\limsup_{t\to\infty} \frac{R(t)}{{t^{\alpha/\beta}
(\log\log t)^{1-\alpha/\beta}}}=c_9, ~~\,\,
\liminf_{t\to\infty} \frac{R(t)}{{t^{\alpha/\beta}
(\log\log t)^{-\alpha/\beta}}}=c_{10},
\end{align*}
where $R(t):=\mu(X([0,t]))$ is the range of the process $X$.
\end{itemize}
\end{theorem}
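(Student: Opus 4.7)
The plan is to derive the three parts from a short list of quantitative estimates that the two-sided heat kernel bound \eqref{e:hke}, together with \eqref{eq:d-set}, forces on the process. As preliminaries I would establish: (a) the one-point tail $\Pp^x(d(X_t,x)\ge r)\asymp (t/r^\beta)\wedge 1$ for all $t,r>0$, by integrating \eqref{e:hke} over the exterior of $B(x,r)$ and using \eqref{eq:d-set}; (b) the exit-time asymptotics $\Ee^x\tau_{B(x,r)}\asymp r^\beta$ and $\Pp^x(\tau_{B(x,r)}\le t)\asymp (t/r^\beta)\wedge 1$, by a standard maximal/Markov bootstrap from (a); and (c) Dirichlet spectral estimates $\lambda_1(B(x,r))\asymp r^{-\beta}$ yielding $\Pp^x(\tau_{B(x,r)}>t)\asymp \exp(-c t r^{-\beta})$ for $t\gg r^\beta$, obtained from the on-diagonal HKE applied to the killed semigroup.

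For part (1), the convergent direction is Borel--Cantelli along the geometric scale $t_n=2^{-n}$: by (b),
\[\Pp^x\Bigl(\sup_{0<s\le t_n}d(X_s,x)\ge \varphi(t_n)\Bigr)\le C\,\frac{t_n}{\varphi(t_n)^\beta},\]
and these are summable once $\int_0^1\varphi(s)^{-\beta}\,ds<\infty$ because $\varphi$ is monotone, so the usual comparison with the integral applies. The divergent direction uses a conditional second Borel--Cantelli lemma: the lower bound in (a) combined with the strong Markov property gives $\Pp^x(d(X_{t_n},X_{t_{n+1}})\ge c\varphi(t_{n+1})\mid \mathscr{F}_{t_{n+1}})\ge c' t_{n+1}/\varphi(t_{n+1})^\beta$, so divergence of the sum and L\'evy's extension of Borel--Cantelli force $\limsup=\infty$ almost surely. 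For part (2), I would use (b) for the upper bound and (c) for the lower bound on $\Pp^x(\sup_{0<s\le t}d(X_s,x)\le r)=\Pp^x(\tau_{B(x,r)}>t)$; substituting $r\asymp (t/\log|\log t|)^{1/\beta}$ gives the Chung profile after a subsequence argument, and a Kolmogorov 0-1 law based on the Markov property identifies the liminf as a nonrandom constant.

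For part (3), the hypothesis $\alpha<\beta$ yields $\int_0^1 p(t,x,x)\,dt\asymp\int_0^1 t^{-\alpha/\beta}\,dt<\infty$, so a jointly measurable local time $l(x,t)$ exists. The normalizations $t^{1-\alpha/\beta}(\log\log t)^{\pm\alpha/\beta}$ are exactly the Donsker--Varadhan small-ball asymptotics: the $\limsup$ for $\sup_y l(y,t)$ and the $\liminf$ for $R(t)$ come from small-ball probabilities $\Pp^x(X([0,t])\subset B(x,r))\asymp \exp(-c t r^{-\beta})$ with $r\asymp t^{1/\beta}(\log\log t)^{-1/\beta}$, optimized in $r$; the dual $\liminf$ for $\sup_y l(y,t)$ and $\limsup$ for $R(t)$ come from the complementary lower tail of the number of distinct $r$-balls visited, extracted from (a)-(b). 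Assembling these into a.s.\ LILs proceeds by the standard dyadic Borel--Cantelli argument, the duality $R(t)\cdot\sup_y l(y,t)\gtrsim t$, and a 0-1 law to fix the constants.

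The main obstacle, in my view, is part (3). The classical L\'evy proofs (e.g.\ \cite{DV,Gr,Wee}) rest on translation invariance and exact scaling to obtain sharp spectral asymptotics and hence the Donsker--Varadhan rate function. In the present metric measure setting neither is available, so I would need to replace Fourier/scaling arguments by purely heat-kernel-based sub/super-additivity: partition $[0,t]$ into mesoscopic blocks of length $\asymp r^\beta$, use (c) on each block, and match the resulting subadditive functional to the $(\log\log t)$ exponent via Varadhan's lemma. A secondary delicate point is the identification of the constants $c_5,\dots,c_{10}$ as deterministic: without a dilation group one cannot invoke the usual scaling/ergodic shortcut, and one must argue tail-triviality of shift-invariant events directly from the Markov property and the uniformity in \eqref{e:hke}.
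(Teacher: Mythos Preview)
Your outline for parts (1) and (2) broadly matches the paper's route (Theorems~\ref{sup-1}--\ref{inf2} specialized to $V(r)=r^\alpha$, $\phi(r)=r^\beta$). One subtlety: in the divergent case of part~(1) at $t\to 0$, your conditional Borel--Cantelli conditions on $\mathscr{F}_{t_{n+1}}$ with $t_{n+1}=2^{-(n+1)}$, but this filtration is \emph{decreasing} in $n$, so L\'evy's extension does not apply. The paper instead uses a Kochen--Stone/Petrov lemma (Lemma~\ref{bc}), proving $\Pp^x(A_k\cap A_j)\le C\,\Pp^x(A_k)\Pp^x(A_j)$; this pairwise control requires the \emph{upper} off-diagonal heat-kernel bound as well as the lower one, as the authors note explicitly in Remark~\ref{rem26}. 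For $t\to\infty$ the filtration goes the right way and only the lower bound is needed, as you suggest.

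The real gap is part~(3). You correctly flag that Donsker--Varadhan is unavailable without self-similarity, but your small-ball/duality scheme does not produce the \emph{upper} bound on $\limsup_{t\to\infty} L^*(t)$: the inequality $t\le L^*(t)R(t)$ only converts an upper bound on one side into a \emph{lower} bound on the other. The paper's substitute for large deviations is a polynomial tail estimate $\Pp^x\bigl(L^*(t)\ge b\,t/V(\phi^{-1}(t))\bigr)\le c\,b^{-c'}$ uniform in $t$ and $x$ (Proposition~\ref{p-local}). This is built from two ingredients you do not mention: an exponential tail for a \emph{single} $l(y,t)$ via Kac's moment formula (Proposition~\ref{kac} and Lemma~\ref{local-lemma-1}), and a uniform modulus of continuity for the map $y\mapsto l(y,t)$, namely a bound on $\Pp^z\bigl(\sup_{d(x,y)\le L}\sup_{s\le u}|l(x,s)-l(y,s)|\ge A\bigr)$ obtained by combining a Garsia-type lemma on the metric space with the scaled family $X^{(\delta)}_t=X_{\phi(\delta)t}$ (Proposition~\ref{thm:estlochold} and Lemma~\ref{thm:Garlw}). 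This scaling family---not large-deviation subadditivity or Varadhan's lemma---is the paper's replacement for exact self-similarity, and it is the technical core absent from your proposal. With Proposition~\ref{p-local} in hand, the abstract LIL machinery (Proposition~\ref{U-LIL}), the duality $t\le L^*(t)R(t)$, and a block argument close to the one you sketch then close all four limits in Theorems~\ref{local-1}, \ref{LILinfL} and~\ref{range}.
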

Note that in \cite{CK}, \eqref{e:hke} is proved for stable-like processes, that is
\begin{equation}\label{eq:DFshape}
\mathscr{E}(u,v)=\int_{M\times M\setminus \{x=y\}}\!\!(\widetilde u(x)-\widetilde u(y))(\widetilde v(x)-\widetilde v(y))\, n(dx,dy),\quad
\forall u,v\in {\mathscr{F}},
\end{equation}
where $\widetilde u$ is a quasi-continuous version of $u\in {\mathscr{F}}$,
and the L\'evy measure $n(\cdot,\cdot)$ satisfies
\[
c_1'\frac {\mu(dx)\mu(dy)}{d(x,y)^{\alpha+\beta}}\le n(dx,dy)\le c_2'\frac {\mu(dx)\mu(dy)}{d(x,y)^{\alpha+\beta}},
\]
for $\beta\in (0,2)$.
$\beta$-stable-like processes are perturbations of $\beta$-stable processes, and clearly
they are no longer L\'evy processes in general. Stable-like processes are analogues of uniformly elliptic divergence forms in the framework of jump processes.
-- We emphasize here that, in Theorem \ref{stable-like} above, we do not assume $\beta<2$ in general (see Example \ref{exfe4}).
Indeed, in this paper we will consider more general jump processes that include
jump processes of mixed types on metric measure spaces, which are given in Section  \ref{Sect5}.

\medskip

For the case of diffusions that enjoy the so-called sub-Gaussian
heat kernel estimates, LILs corresponding to Theorem \ref{stable-like} have been
established in \cite{BK,fst}.  However, since the proof uses Donsker-Varadhan's large deviation
theory for Markov processes, some self-similarity of the process is assumed in these papers (see \cite[(4.4)]{BK}
and \cite[(1.7)]{fst}).
In the present paper, we will not assume such a self-similarity on the process $X$. Instead we consider a family of scaling processes and take a (somewhat classical) \lq\lq bare-hands\rq\rq\, approach.

\bigskip

The remainder of the paper is organized as follows. In Section \ref{secn2}, we give
the assumptions on estimates of heat kernels
we will use, and present their consequences.
In Section \ref{Sect3}, we establish LILs for sample paths.
Section \ref{Sect4} is devoted to the LILs of maximums of local times and ranges of processes.
The LILs for jump processes of mixed types on metric measure spaces are given in Section \ref{Sect5} to illustrate the power of our results. Some of the proofs and technical lemmas are
left in Appendix \ref{Appx}.

Throughout this paper, we will use $c$, with or
without subscripts and superscripts,
to denote strictly
positive finite constants whose values are insignificant and may change from line to
line. We write $f\asymp g$ if there exist constants $c_1, c_2>0$ such that
$c_1g(x)\le f(x)\le c_2g(x)$ for all $x$.

\section{Heat Kernel Estimates and Their Consequences}\label{secn2}
Let $(M,d)$ be a locally compact, separable and connected metric space, and let $\mu$ be a Radon measure on $M$ with full support such that for any $x\in M$ and $r>0$,
\begin{equation}\label{e:BV}
C_*^{-1}V(r) \le \mu(B(x,r))\le C_* V(r),\end{equation} 
where $C_* \ge 1$ and $V:\R_+\to\R_+$ is a strictly increasing function satisfying that 
there exists a constants $c>1$ so that
\begin{equation}\label{v-d} V(0)=0, \quad V(\infty)=\infty \quad  \textrm{ and } \quad V(2r)\le cV(r)\quad\textrm{ for every }r>0.\end{equation}
Note that \eqref{v-d} is equivalent to the following: there exist constants $c,d>0$ such that
\begin{equation}\label{v-d-1} V(0)=0, \quad  V(\infty)=\infty \quad  \textrm{ and } \quad \frac{V(R)}{V(r)}\le c\Big( \frac{R}{r}\Big)^d\quad\textrm{ for all } 0<r<R.\end{equation}
Let $(\mathscr{E}, \mathscr{F})$ be a symmetric regular Dirichlet form on $L^2(M,\mu)$.
In this paper we will consider the following type of estimates for heat kernels:
 there exists
a properly exceptional set $\mathscr{N}$ and,  for given $T \in (0, \infty]$,
 there exist positive constants $C_1$ and $C_2$
such that  for
all $x\in M \setminus \mathscr{N}$,
$\mu$-almost all $y\in M$ and $t \in (0,T)$,
\begin{eqnarray}\label{a-two-sidedup}
p(t,x,y)\le C_1\bigg( \frac{1}{V(\phi^{-1}(t))} \wedge \frac{t}{V(d(x,y))\phi(d(x,y))}  \bigg),
\\
\label{a-two-sidedlow}
C_2\bigg( \frac{1}{V(\phi^{-1}(t))} \wedge \frac{t}{V(d(x,y))\phi(d(x,y))}  \bigg)
\le p(t,x,y),
\end{eqnarray} where $\phi:\R_+\to \R_+$ is a strictly increasing function.

We now state
the first set of our assumptions on heat kernels.

\begin{assumption}\label{assmp1}
There exists a transition density $p(t,x,y): \R_+ \times M \times M \to [0, \infty]$ of the semigroup of $(\mathscr{E}, \mathscr{F})$  satisfying
\eqref{a-two-sidedup} and \eqref{a-two-sidedlow} with $T=\infty$, and \eqref{v-d}.
\end{assumption}
\begin{assumption}\label{assmp11}
 $\phi(0)=0$, and there exist constants $c_0\in(0,1)$ and $\theta>1$ such that for every $r>0$
\begin{equation}\label{phi+w} \phi(r)\le c_0 \phi(\theta r).\end{equation}
\end{assumption} It is easy to see that under \eqref{phi+w}, $\lim\limits_{r\to\infty}\phi(r)=\infty$, and there exist constants $c_0,d_0>0$ such that $$ c_0\Big( \frac{R}{r}\Big)^{d_0}\le \frac{\phi(R)}{\phi(r)}\quad \textrm{ for all } 0<r<R,$$ e.g. the proof of \cite[Proposition 5.1]{GH11}.

In this section, we assume the above heat kernel estimates and discuss the consequences. Sometimes we only consider two-sided estimates on the heat kernel for short time. We say that Assumption \ref{assmp1} holds with $T<\infty$, if there exists a transition density $p(t,x,y): \R_+ \times M \times M \to [0, \infty]$ of the semigroup of $(\mathscr{E}, \mathscr{F})$  satisfying
\eqref{a-two-sidedup} and \eqref{a-two-sidedlow} with $T<\infty$, and \eqref{v-d}.
We emphasize that the constants appearing in the statements of this section only depend
on heat kernel estimates \eqref{a-two-sidedup} and \eqref{a-two-sidedlow}.

\medskip

Before we go on, let us note that
\eqref{a-two-sidedup} and \eqref{a-two-sidedlow} can be proved
in a rather wide framework.

\begin{theorem}{\rm (}\cite[Theorem 1.2]{CK1}{\rm )}
Let $(M,d,\mu)$ be a metric measure space given above with
$\mu(M)=\infty$. We assume that
 $\mu(B(x,r))\asymp V(r)$ for all $x\in M$ and $r>0$ where $V$ satisfies \eqref{eqn:univd*} below.
We also assume that there exist $x_0\in M$, $\kappa \in (0, 1]$ and an increasing sequence $r_n\to \infty$ as $n\to \infty$ so that for every $n\geq 1$, $0<r<1$ and $x\in \overline{B(x_0, r_n)}$,
there is some ball $B(y, \kappa r)\subset B(x, r) \cap \overline{B(x_0,r_n)}$.
Let $(\mathscr{E}, \mathscr{F})$ be a symmetric regular Dirichlet form on $L^2(M,\mu)$
such that $\mathscr{E}$ is given by \eqref{eq:DFshape} and the
L\'evy measure $n(\cdot,\cdot)$ satisfies
\begin{equation}\label{eq:Levyofe}
c_1\frac {\mu(dx)\mu(dy)}{V(d(x,y))\phi(d(x,y))}\le n(dx,dy)\le c_2\frac {\mu(dx)\mu(dy)}{V(d(x,y))\phi(d(x,y))}.
\end{equation}
Assume further that
 $\phi$
satisfies \eqref{assum-2} below
and that $\int_0^r(s/\phi(s))ds\le c_3r^2/\phi( r)$ for all $r>0$. Then there exists a jointly continuous
heat kernel $p(t,x,y)$
that enjoys the estimates \eqref{a-two-sidedup} and \eqref{a-two-sidedlow} with $T=\infty$.
\end{theorem}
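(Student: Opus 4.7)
The plan is to follow the Chen--Kumagai approach tailored to symmetric pure-jump Dirichlet forms on metric measure spaces, which proceeds through (i) on-diagonal upper bound via Nash inequality, (ii) off-diagonal upper bound via Meyer's decomposition and a Davies perturbation, (iii) near-diagonal lower bound via ultracontractivity, conservativeness and H\"older regularity, and (iv) off-diagonal lower bound via a Lévy-system chaining. First I would produce the transition density and prove the on-diagonal bound $p(t,x,x)\le c/V(\phi^{-1}(t))$. From \eqref{eq:Levyofe}, \eqref{v-d} and the assumed scaling of $\phi$, one derives a Nash-type inequality
\begin{equation*}
\|u\|_2^2 \le c\,\Phi^{-1}\!\bigl(\mathscr{E}(u,u)\bigr)\|u\|_1^2\cdot \textup{(corrections)},
\end{equation*}
with profile $\Phi$ built from $V$ and $\phi$; here the hypothesis $\int_0^r (s/\phi(s))\,ds \le c r^2/\phi(r)$ is what makes the small-jump part of $\mathscr{E}$ comparable to the required moment quantity. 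Ultracontractivity then yields existence of $p(t,x,y)$ and the on-diagonal bound, and joint continuity follows from parabolic H\"older regularity of caloric functions, itself a consequence of the two-sided jump-kernel bound and volume doubling.

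For the off-diagonal upper bound I would invoke Meyer's truncation: split $n=n^{(r)}+n_{(r)}$, where $n^{(r)}$ keeps only jumps of length $<r$, and let $X^{(r)}$ be the subprocess associated with the truncated form $\mathscr{E}^{(r)}$. A Davies perturbation of $\mathscr{E}^{(r)}$ by exponential weights $e^{\pm\psi}$ is legitimate because $n^{(r)}$ is supported on short jumps, and it produces the identity $\mathscr{E}^{(r)}(e^{-\psi}u,e^{\psi}u)+\Lambda_r(\psi)\|u\|_2^2\ge 0$ with $\Lambda_r(\psi)$ computable in terms of $\phi$. Combined with the on-diagonal estimate this gives a stretched-exponential bound for $p^{(r)}(t,x,y)$. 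Meyer's identity
\begin{equation*}
p(t,x,y)\le p^{(r)}(t,x,y)+\int_0^t\!\!\int_{M\times M} p^{(r)}(s,x,z)\,p(t-s,w,y)\,n_{(r)}(dz,dw)\,ds
\end{equation*}
then converts the big-jump remainder into the tail $c\,t/(V(d(x,y))\phi(d(x,y)))$ once $r$ is tuned to $r\asymp d(x,y)$ and the Davies weight is optimized. Taking the minimum with the on-diagonal bound yields \eqref{a-two-sidedup}.

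For the lower bound I would first establish the near-diagonal estimate $p(t,x,y)\ge c/V(\phi^{-1}(t))$ when $d(x,y)\le \eta\,\phi^{-1}(t)$, via the standard ``square-and-integrate'' argument
\begin{equation*}
p(2t,x,x)\ge \mu(B(x,r))^{-1}\Bigl(\int_{B(x,r)} p(t,x,z)\,\mu(dz)\Bigr)^{\!2},
\end{equation*}
combined with the on-diagonal upper bound, conservativeness (forced by $\mu(M)=\infty$ and the scaling of $\phi$ from \eqref{assum-2}), the already-proved off-diagonal upper bound to control $\Pp^x(X_t\notin B(x,r))$, and H\"older continuity of $p(t,\cdot,\cdot)$ to pass from the diagonal to nearby points. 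For the off-diagonal lower bound with $d(x,y)\ge c\phi^{-1}(t)$, I would invoke the Lévy system: condition on the trajectory staying in $B_x:=B(x,\phi^{-1}(t)/4)$ until some time $s\in[t/3,2t/3]$, making a single jump into $B_y:=B(y,\phi^{-1}(t)/4)$, and remaining in $B_y$ afterwards. The near-diagonal lower bound estimates the two survival pieces, the exit time from small balls controls the ``staying'' probability, and \eqref{eq:Levyofe} gives the jump rate $n(B_x,B_y)\ge c\mu(B_x)\mu(B_y)/(V(d(x,y))\phi(d(x,y)))$, producing the required $c\,t/(V(d(x,y))\phi(d(x,y)))$ and hence \eqref{a-two-sidedlow}.

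The hardest step is the Davies part of the off-diagonal upper bound. For a jump kernel with a general tail profile $\phi$ (not the clean $r^{-\alpha-\beta}$ of $\beta$-stable-like processes) one has to bound
\begin{equation*}
\int_{d(z,w)<r}\bigl(e^{\psi(w)-\psi(z)}-1\bigr)^2\, n(dz,dw)
\end{equation*}
uniformly in $\psi$ Lipschitz, and show that this quantity is controlled by $c/\phi(r)$ times a Lipschitz constant squared. The hypothesis $\int_0^r(s/\phi(s))\,ds\le c r^2/\phi(r)$ is exactly what makes this moment estimate quadratic and controlled by $\phi$ alone, so that $\Lambda_r(\psi)$ has the right scaling. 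Balancing the weight, the truncation radius $r$ and the time $t$ so that the stretched-exponential term for $p^{(r)}$ matches the Meyer polynomial term at scale $r\asymp d(x,y)$ is the delicate calculation; everything else in the proof is a now-standard adaptation of Nash/Davies/Meyer/Lévy-system techniques for symmetric jump processes on metric measure spaces.
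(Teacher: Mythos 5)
This theorem is not proved in the paper at all --- it is quoted from Chen and Kumagai \cite[Theorem 1.2]{CK1} --- and your outline (Nash-type inequality for the on-diagonal bound, Meyer truncation with a Davies perturbation for the off-diagonal upper bound, near-diagonal lower bound from conservativeness and H\"older regularity, L\'evy-system chaining for the off-diagonal lower bound) is essentially the argument of that reference, so the approach is correct. The only things you leave implicit are where the geometric hypothesis on $x_0,\kappa,r_n$ enters (it is used to construct the heat kernel by approximation over the compacts $\overline{B(x_0,r_n)}$, a step your sketch passes over) and the present paper's sole addition, namely the remark that the extra scaling assumption on $(M,d)$ imposed in \cite{CK1} can be removed by working with the scaled metrics \eqref{e:dmu}.
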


\begin{remark}In \cite[Theorem 1.2]{CK1}, an additional assumption was made on the space $(M,d)$ such that it enjoys some scaling property (see \cite[p.\ 282]{CK1}). However, such
assumption can be removed by introducing a family of scaled distances as in \eqref{e:dmu} below
instead of assuming the existence of a family of scaled spaces, and by discussing similarly to the proof of Proposition \ref{thm:estlochold} below.
\end{remark}

\subsection{General case}
In this subsection, we state consequences of  Assumptions \ref{assmp1} and \ref{assmp11}. The proofs of next two propositions are given in Appendix  \ref{A1}.
We note that Proposition \ref{con} and its proof are due to \cite{CKW}.
\begin{proposition}\label{con}
If $p(t,x,y)$ satisfies \eqref{a-two-sidedlow} with $T=\infty$
 (in particular, if Assumption $\ref{assmp1}$ is satisfied), then
the process $X$ is conservative, i.e. for any $x\in M \setminus \mathscr{N}$ and $t>0$,
$$\int p(t,x,y)\,\mu(dy)=1.$$
\end{proposition}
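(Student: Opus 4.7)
\textbf{Proof plan for Proposition \ref{con}.} The goal is to show $P_t1(x)=1$ for every $t>0$ and $x\in M\setminus\mathscr{N}$; sub-Markovianity of $(P_t)$ already yields $P_t1\le 1$, so only the opposite inequality is at stake.

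First, I use the heat kernel lower bound to extract a uniform positive lower bound on $P_t1$. For $y\in B(x,\phi^{-1}(t))$, the estimate \eqref{a-two-sidedlow} simplifies to $p(t,x,y)\ge C_2/V(\phi^{-1}(t))$, and integrating over this ball using the volume lower bound in \eqref{e:BV} produces
\[
P_t1(x)\ \ge\ \frac{C_2}{V(\phi^{-1}(t))}\,\mu(B(x,\phi^{-1}(t)))\ \ge\ \frac{C_2}{C_*}\ =:\ \delta\ >\ 0,
\]
uniformly in $t>0$ and $x\in M\setminus\mathscr{N}$. Equivalently, the lifetime $\zeta$ of $X$ satisfies $\Pp^x(\zeta>t)\ge\delta$ for every $t$ and $x$.

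Second, by monotonicity of $t\mapsto P_t1$ the pointwise limit $v(x):=\lim_{t\to\infty}P_t1(x)\in[\delta,1]$ exists and satisfies $P_sv=v$ for every $s>0$. The strict positivity $p(t,\cdot,\cdot)>0$ forced by \eqref{a-two-sidedlow}, combined with the symmetry of $P_s$ and the connectedness of $M$, propagates the supremum of $v$ throughout $M$ and forces this bounded invariant function to be constant: $v\equiv c\in[\delta,1]$.

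Third, I rule out $c<1$ via the Markov property at a deterministic time $s>0$. Since $\Pp^y(\zeta<\infty)=1-v(y)=1-c$ for every $y$,
\[
\Pp^x(s<\zeta<\infty)\ =\ \Ee^x\bigl[\mathbf{1}_{\{\zeta>s\}}\,\Pp^{X_s}(\zeta<\infty)\bigr]\ =\ (1-c)\bigl(1-\Pp^x(\zeta\le s)\bigr),
\]
while directly $\Pp^x(s<\zeta<\infty)=\Pp^x(\zeta<\infty)-\Pp^x(\zeta\le s)=(1-c)-\Pp^x(\zeta\le s)$. Equating these two expressions and simplifying yields $c\cdot\Pp^x(\zeta\le s)=0$, and since $c\ge\delta>0$ this forces $\Pp^x(\zeta\le s)=0$ for every $s>0$. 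Hence $\Pp^x(\zeta<\infty)=0$, i.e., $1-c=0$, so $c=1$; combined with the monotonicity $v\le P_t1\le 1$ this gives $P_t1(x)=1$ for every $t>0$ and $x\in M\setminus\mathscr{N}$.

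The main technical obstacle is the irreducibility step in the second paragraph: on a general unbounded metric measure space with $\mu(M)=\infty$ the pointwise lower bound on $p(s,x,y)$ degenerates as $d(x,y)\to\infty$, so the constancy of the bounded invariant function $v$ does not follow from a single application of a strong-maximum-principle-type argument. One must combine the strict positivity of $p$ with the symmetry of $P_s$ (to pass the propagation of the supremum through the whole space) and with the connectedness of $M$, ruling out nontrivial invariant ``sets at infinity'' that would be compatible with the heat kernel lower bound alone; the absence of a killing term in the Beurling--Deny decomposition \eqref{df} provides the background structural input that prevents an interior obstruction.
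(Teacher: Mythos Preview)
Your first step (the uniform lower bound $P_t1(x)\ge\delta$) is correct and matches the paper. The gap is in your second step. You assert that the bounded invariant function $v=\lim_t P_t1$ is constant, but your sketch does not prove it, and the natural maximum-principle route is circular: if $v(x_0)=\sup v$, then from $v(x_0)=\int p(s,x_0,y)v(y)\,\mu(dy)$ one needs \emph{both} $v\equiv\sup v$ $\mu$-a.e.\ \emph{and} $\int p(s,x_0,y)\,\mu(dy)=1$ to force equality---the latter is exactly conservativeness at $x_0$. Without already knowing $P_s1=1$, sub-Markovianity allows mass to leak to the cemetery and the argument stalls. Your step~3 is algebraically correct \emph{given} constancy of $v$, but if you drop constancy and rerun the same Markov-at-time-$s$ computation using only $P_sv=v$, it collapses to a tautology. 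So the whole weight rests on step~2, which you have not established.

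The paper avoids this entirely. It never proves $v$ is constant; instead it exploits the ``no killing inside $M$'' structure you mention only in passing. Writing $u(y)=\Pp^y(\zeta=\infty)$, the process $u(X_t)=\Ee^x[1_{\{\zeta=\infty\}}\mid\mathscr{F}_t]$ is a bounded martingale. Taking exit times $\tau_j$ from an exhausting sequence of compacts $K_j\uparrow M$, the absence of a killing term gives $\tau_j<\zeta$ a.s., hence $X_{\tau_j}\in M$ and $u(X_{\tau_j})\ge\delta$ for every $j$. Optional stopping then yields
\[
u(x)=\lim_j\Ee^x u(X_{\tau_j})\ \ge\ \delta\,\Pp^x(\zeta<\infty)+\Pp^x(\zeta=\infty),
\]
since on $\{\zeta=\infty\}$ the martingale converges to $1$ while on $\{\zeta<\infty\}$ each $u(X_{\tau_j})\ge\delta$. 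This forces $\Pp^x(\zeta<\infty)=0$ directly. The point is that stopping at $\tau_j\uparrow\zeta$ lets one ``approach death'' while remaining in $M$, where the uniform bound $u\ge\delta$ applies; your deterministic-time Markov identity cannot see this.
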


\begin{proposition}\label{space} Let $p(t,x,y)$ satisfy
Assumptions $\ref{assmp1}$ and $\ref{assmp11}$ above. Then, we have 
$\textrm{Diam }(M)=\infty$ and  $\mu(M)=\infty$. 
Moreover,
there exist constants $c_1, c_2>0$, $d_2\geq d_1>0$
 such that
\begin{eqnarray}\label{eqn:univd*}
~~\mbox{   }~~\mbox{   }~\mbox{   }~~~
  c_1 \Big(\frac Rr\Big)^{d_1} \leq \frac{V
(R)}{V (r)} \ \leq \ c_2 \Big(\frac Rr\Big)^{d_2}~ \hbox{ for
every }~ 0<r<R<\infty.
\end{eqnarray}
\end{proposition}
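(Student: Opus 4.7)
The plan is to prove the three assertions in sequence: $\mu(M) = \infty$, then $\mathrm{Diam}(M) = \infty$, and finally the two-sided polynomial control on $V$.

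For $\mu(M) = \infty$, I would argue by contradiction. If $\mu(M) < \infty$, Proposition~\ref{con} and the conservativeness identity $1 = \int_M p(t,x,y)\,\mu(dy) \le \mu(M)\sup_y p(t,x,y)$ force $\sup_y p(t,x,y) \ge 1/\mu(M)$ uniformly in $t$. However \eqref{a-two-sidedup} gives $\sup_y p(t,x,y) \le C_1/V(\phi^{-1}(t))$. Under \eqref{phi+w}, $\phi(r) \to \infty$ as $r \to \infty$, so $\phi^{-1}(t) \to \infty$, and then $V(\phi^{-1}(t)) \to \infty$ by \eqref{v-d}. Thus $\sup_y p(t,x,y) \to 0$, a contradiction. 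Then $\mathrm{Diam}(M) = \infty$ is immediate: otherwise $M \subseteq B(x,D)$ for some $x$ and some $D < \infty$, so \eqref{e:BV} gives $\mu(M) \le C_* V(D) < \infty$, contradicting what was just shown.

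The polynomial upper bound on $V(R)/V(r)$ is nothing more than \eqref{v-d-1}, an immediate iteration of the doubling condition \eqref{v-d}. The polynomial lower bound is the main content, and I expect it to be the most delicate step. My plan is to establish reverse volume doubling, i.e.\ to produce $\lambda > 1$ and $\eta > 1$ with $V(\lambda r) \ge \eta V(r)$ for all $r > 0$, which then implies the polynomial lower bound by iteration. I would combine three ingredients: (i) a tail estimate
\[
\Pp^x(d(x,X_t) > R) \le C'\,\frac{t}{\phi(R)} \qquad \text{for } R \ge \phi^{-1}(t),
\]
proved by annular decomposition of $B(x,R)^c$, the off-diagonal part of \eqref{a-two-sidedup}, the volume doubling \eqref{e:BV}, and the growth $\phi(\theta^k r) \ge c_0^{-k}\phi(r)$ from \eqref{phi+w}; (ii) the on-diagonal upper HKE in the form $\Pp^x(X_t \in B(x,r)) \le C_1 C_* V(r)/V(\phi^{-1}(t))$; and (iii) Proposition~\ref{con}. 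Choosing $R = \phi^{-1}(Kt)$ with $K$ large enough in (i)+(iii) gives $\Pp^x(X_t \in B(x,R)) \ge 3/4$, and choosing $r = \delta \phi^{-1}(t)$ with $\delta$ small enough in (ii) (possible by the already-established upper polynomial bound) gives $\Pp^x(X_t \in B(x,r)) \le 1/4$. Subtraction yields $\Pp^x(X_t \in B(x,R)\setminus B(x,r)) \ge 1/2$, and applying the on-diagonal upper HKE in the reverse direction gives
\[
\mu(B(x,R)) - \mu(B(x,r)) \ge \frac{V(\phi^{-1}(t))}{2C_1},
\]
which via \eqref{e:BV} converts into a multiplicative gap between $V(R)$ and $V(r)$. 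Iterating along a geometric sequence of scales yields the polynomial lower bound with $d_1$ expressible in terms of $C_1, C_2, C_*, c_0$ and $\theta$.

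The main obstacle will be the last step: closing the chain of inequalities so that one actually extracts a strict gap $V(\lambda r)/V(r) \ge \eta > 1$ uniformly in $r$ (rather than a vacuous consistency check). Because only a lower polynomial rate on $\phi$ is assumed and no upper rate, special care is needed in choosing the auxiliary scales $\delta$, $K$, $\lambda$ and in controlling the ratio $R/r$ uniformly in $r$; this is where the interplay between the HKE constants, the doubling constant of $V$, and the scaling parameters $c_0,\theta$ of $\phi$ must be combined carefully.
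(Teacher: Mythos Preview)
Your treatment of $\mu(M)=\infty$ and $\mathrm{Diam}(M)=\infty$ is essentially the paper's argument phrased as a contradiction; the paper writes it directly as $\mu(M)\ge \|u_t\|_1/\|u_t\|_\infty\to\infty$ with $u_t=p(t,x_0,\cdot)$, using Proposition~\ref{con} and \eqref{a-two-sidedup}.

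Where you diverge from the paper is the reverse-doubling (the lower polynomial bound on $V$). The paper does not touch heat kernels here at all: it simply invokes \cite[Corollary~5.3]{GH11}, i.e.\ the standard geometric fact that on a connected metric measure space with the volume doubling property \eqref{v-d} and infinite diameter one always has reverse volume doubling. The connectedness of $M$, which is part of the standing hypotheses in Section~\ref{section1}, is the only extra input. This is far shorter than the route you propose.

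Your heat-kernel route also has a circularity as written. In step (ii) you want $\Pp^x(X_t\in B(x,r))\le C_1C_*\,V(r)/V(\phi^{-1}(t))\le 1/4$ with $r=\delta\phi^{-1}(t)$, and you justify the choice of $\delta$ by the ``already-established upper polynomial bound''. But the upper bound $V(R)/V(r)\le c(R/r)^{d_2}$ (i.e.\ \eqref{v-d-1}) yields only a \emph{lower} bound $V(\delta s)/V(s)\ge c^{-1}\delta^{d_2}$; it gives no control from above. A uniform-in-$t$ upper bound $V(\delta\phi^{-1}(t))/V(\phi^{-1}(t))\to 0$ as $\delta\to 0$ is exactly the reverse doubling you are trying to prove. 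Without that, doubling alone permits $V$ to be nearly flat on long intervals, so you cannot force $\Pp^x(X_t\in B(x,\delta\phi^{-1}(t)))$ to be small uniformly in $t$. The subsequent annulus inequality $C_*V(R)\ge C_*^{-1}V(r)+V(\phi^{-1}(t))/(2C_1)$ then does not produce a ratio strictly exceeding $1$ unless the constants cooperate, which they need not. In short, this is the ``main obstacle'' you flagged, and it is genuine: the argument as planned does not close. The clean fix is the one the paper uses---bring in connectedness and apply the standard $\mathrm{VD}\Rightarrow\mathrm{RVD}$ lemma.
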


\begin{proposition}\label{jump} Assume that the regular Dirichlet form $(\mathscr{E}, \mathscr{F})$ given by \eqref{df} enjoys the heat kernel $p(t,x,y)$ such that Assumption $\ref{assmp1}$ is satisfied. Then, the jump measure $n(dx,dy)$
satisfies \eqref{eq:Levyofe}.
\end{proposition}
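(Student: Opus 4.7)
The plan is to recover $n(dx,dy)$ from the short-time asymptotics of $p(t,x,y)$ by testing the Dirichlet form identity against functions with disjoint supports. Let $u,v\in\mathscr{F}\cap C_c(M)$ with $r:=d(\mathrm{supp}(u),\mathrm{supp}(v))>0$. Since $u(x)v(x)\equiv 0$ everywhere, expanding $(u(x)-u(y))(v(x)-v(y))$ in \eqref{df} and using the symmetry of $n$ collapses the form to
\[
\mathscr{E}(u,v)=-2\int u(x)v(y)\,n(dx,dy).
\]
On the other hand, by the standard semigroup characterization for a regular Dirichlet form, $\mathscr{E}(u,v)=\lim_{t\to0^+}t^{-1}\langle u-P_tu,v\rangle_{L^2(\mu)}$, and since $\langle u,v\rangle=0$ and $p(t,\cdot,\cdot)$ is symmetric, this identity rewrites as
\[
2\int u(x)v(y)\,n(dx,dy)=\lim_{t\to0^+}\frac{1}{t}\int\!\!\int u(x)v(y)\,p(t,x,y)\,\mu(dx)\mu(dy).
\]

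Next I would insert the heat kernel estimates. For $t<\phi(r)$ the inequality $\phi^{-1}(t)<r\le d(x,y)$ holds for every $(x,y)\in\mathrm{supp}(u)\times\mathrm{supp}(v)$, so the minima in \eqref{a-two-sidedup}--\eqref{a-two-sidedlow} are realized by the second term, giving
\[
C_2\cdot\frac{1}{V(d(x,y))\phi(d(x,y))}\le\frac{p(t,x,y)}{t}\le C_1\cdot\frac{1}{V(d(x,y))\phi(d(x,y))}
\]
uniformly on $\mathrm{supp}(u)\times\mathrm{supp}(v)$. Multiplying by the nonnegative product $u(x)v(y)$, integrating against $\mu\otimes\mu$ (the dominating integrand is bounded by $\|u\|_\infty\|v\|_\infty/(V(r)\phi(r))$ on a compact set, so dominated convergence applies), and passing to the limit $t\to0^+$ in the identity above yields
\[
\frac{C_2}{2}\!\int\! u(x)v(y)\,\frac{\mu(dx)\mu(dy)}{V(d(x,y))\phi(d(x,y))}\le\int\! u(x)v(y)\,n(dx,dy)\le\frac{C_1}{2}\!\int\! u(x)v(y)\,\frac{\mu(dx)\mu(dy)}{V(d(x,y))\phi(d(x,y))}.
\]
A routine monotone-class argument, approximating indicators of pairs of relatively compact Borel sets at positive distance by such nonnegative $u,v$, then promotes these integral bounds to the pointwise measure inequality \eqref{eq:Levyofe} on $M\times M\setminus\mathrm{diag}$.

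The main technical point, and the obstacle I would address most carefully, is ensuring that the test-function class $\mathscr{F}\cap C_c(M)$ is rich enough to separate pairs of disjoint Borel sets and that the identity $\mathscr{E}(u,v)=\lim_t t^{-1}\langle u-P_tu,v\rangle$ is available for such test pairs. Both are standard in the regular Dirichlet form setup, but the availability of suitable cutoffs in $\mathscr{F}$ must be read off from Assumption \ref{assmp1}: convolving compactly supported $L^2\cap L^\infty$ functions with $P_s$ produces elements of $\mathscr{F}$ by the spectral theorem, and for sets at positive distance the resulting approximants can be arranged to have disjoint supports in the limit. Once the integral inequalities hold for a sufficiently rich collection of $(u,v)$, the diagonal-free measure identification is immediate.
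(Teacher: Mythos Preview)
Your argument is correct and is precisely the standard route; the paper itself gives no details beyond pointing to \cite[Theorem 1.2, (a)$\Rightarrow$(c)]{BGK}, whose proof is exactly this disjoint-support test combined with $\mathscr{E}(u,v)=\lim_{t\downarrow 0}t^{-1}\langle u-P_tu,v\rangle$ and the two-sided heat kernel bounds. One remark: your final paragraph's worry about manufacturing test functions via $P_s$-smoothing is unnecessary, since \emph{regularity} of $(\mathscr{E},\mathscr{F})$ already guarantees that $\mathscr{F}\cap C_c(M)$ is uniformly dense in $C_c(M)$, which is more than enough to separate disjoint relatively compact Borel sets and run the monotone-class step.
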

For the assertion of $n(dx,dy)$, using the heat kernel estimates,  we can follow the proof of \cite[Theorem 1.2, (a)$\Rightarrow$ (c)]{BGK}.

\subsection{The case that $\phi$ satisfies the doubling property}

Throughout this subsection, we assume that $\phi$ satisfies the doubling property.
\begin{assumption}\label{assmp2}
There is a constant $c>1$ so that
\begin{equation}\label{doubphiw0}
\phi (2r)\le c \phi (r) \qquad \hbox{ for every }  r>0.
\end{equation}
\end{assumption}
Note that, $\eqref{doubphiw0}$ implies that for any $\theta>1$ there exists $c_0=c_0(\theta)>1$ such that for every $r>0$, $\phi(\theta r)\le c_0 \phi(r).$
If Assumptions \ref{assmp11} and \ref{assmp2} are satisfied, then it is easy to see (also see the proof of \cite[Proposition 5.1]{GH11}) that
$\phi$ satisfies the following inequality
 \begin{eqnarray}\label{assum-2}c_3\Big(\frac{R}{r}\Big)^{d_3}\le \frac{\phi(R)}{\phi(r)}\le c_4\Big(\frac{R}{r}\Big)^{d_4}\end{eqnarray} for all $0<r\le R$ and some positive constants $c_i, d_i (i=3,4)$.

In this subsection, we state consequences of
Assumptions \ref{assmp1}, \ref{assmp11} and \ref{assmp2}.
  The proofs
 of   Propositions  \ref{p:Holder_estimates}, \ref{w-ndle} and \ref{lemma-inf}
  in this subsection  are also given in Appendix  \ref{A1}.

We first prove the H\"older estimates for $p(t,x,y)$. As a result, under Assumptions \ref{assmp1}, \ref{assmp11} and  \ref{assmp2}, even in the case that Assumption \ref{assmp1} holds with $T<\infty$ and that the process $X$ is conservative,
the property exceptional set $\mathscr{N}$ can be taken to be the empty set, and so \eqref{a-two-sidedup} and \eqref{a-two-sidedlow} hold for all $x,y \in M$ and $t>0$.
We will frequently use this fact without explicitly mentioning it.

\begin{proposition}\label{p:Holder_estimates}
Suppose Assumptions $\ref{assmp1}$, $\ref{assmp11}$ and $\ref{assmp2}$ hold.
Then there exist constants $\theta \in (0,1]$ and $c>0$ such that for all $t\ge s >0$ and $x_i, y_i\in M$ with $i=1,2$
\begin{align}\label{e:Holder}
&|p(t,x_1,y_1)- p(s,x_2,y_2)| \nonumber\\
&\le\,  \frac{c}{V(\phi^{-1}(s))\phi^{-1}(s)^\theta} \left( \phi^{-1}(t-s)+d(x_1,x_2)+d(y_1,y_2)\right)^\theta.
\end{align}
In particular,
for all $t>0$ and  $x_i, y_i\in M$ with $i=1,2$
\begin{align}\label{e:Holder2}
|p(t,x_1,y_1)- p(t,x_2,y_2)| \le  \frac{c}{V(\phi^{-1}(t))} \left(\frac{d(x_1,x_2)+d(y_1,y_2)}{\phi^{-1}(t)}\right)^\theta.\end{align}

Furthermore, \eqref{e:Holder} and \eqref{e:Holder2} still hold true for any $0<s<t\le T$, if Assumptions $\ref{assmp11}$ and $\ref{assmp2}$ are satisfied,  Assumption $\ref{assmp1}$ only holds with $T<\infty$ and the process $X$ is conservative.
\end{proposition}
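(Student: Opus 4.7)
The plan is to follow the now-standard Bass--Levin/Chen--Kumagai framework for deriving H\"older regularity of caloric functions from two-sided heat kernel bounds of symmetric jump processes, and then to apply the resulting regularity to $p(\cdot,\cdot,y_0)$ for fixed $y_0$, which is caloric in the other two variables. The joint space--time estimate \eqref{e:Holder} will reduce to controlling spatial oscillations of caloric functions on parabolic cylinders, because the time variable is absorbed into the spatial variable via the parabolic scaling $r\leftrightarrow \phi^{-1}(t)$ built into the heat kernel bounds.

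First I would extract the basic probabilistic ingredients from Assumptions~\ref{assmp1}--\ref{assmp2}. Integrating the lower bound \eqref{a-two-sidedlow} over $B(x,r)$ and using conservativeness (Proposition~\ref{con}) yields $\Pp^x(\tau_{B(x,r)}>\phi(r))\ge c_0>0$, which by iteration gives the mean exit time estimate $\Ee^x[\tau_{B(x,r)}]\asymp \phi(r)$ and the survival bound $\Pp^x(\tau_{B(x,r)}\le t)\le c\,t/\phi(r)$. Combined with the jump intensity identification from Proposition~\ref{jump}, the L\'evy system formula gives the long-jump control
\[
\Pp^x\!\big(X_{t\wedge \tau_{B(x,r)}}\in A\big)\le \frac{c\,t\,\mu(A)}{V(d(x,A))\phi(d(x,A))},\qquad A\subset B(x,r)^c,
\]
which is essential for handling non-local contributions in the oscillation argument.

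Next, the heart of the proof is a parabolic oscillation lemma: there exist $\rho\in(0,1)$ and $\kappa>1$ such that any bounded function $u$ on $[0,\kappa\phi(R)]\times M$ which is caloric on the cylinder $Q:=[0,\kappa\phi(R)]\times B(x_0,R)$ satisfies
\[
\operatorname*{osc}_{[\kappa\phi(R)-\phi(r),\,\kappa\phi(R)]\times B(x_0,r)} u \le \rho\,\operatorname*{osc}_{Q} u,\qquad r\le R/2.
\]
This is the standard Krylov--Safonov/Bass--Levin oscillation reduction: from any point in the small subcylinder the process has uniformly positive probability, by the exit-time and hitting-time estimates, of entering either the sub- or super-level set at height close to the median of $u$ before leaving $Q$. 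Iterating over the geometric sequence of scales $r_n=2^{-n}R$ then produces joint H\"older continuity of $u$ in time and space with an exponent $\theta\in(0,1]$ depending only on $\rho$ and the doubling constants of $V$ and $\phi$, giving $|u(t_1,x_1)-u(t_2,x_2)|\le c\|u\|_\infty\big((\phi^{-1}(|t_1-t_2|)+d(x_1,x_2))/R\big)^\theta$.

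Applying this estimate to $u(r,z)=p(t-r,z,y_1)$ on a cylinder of radius $R\asymp \phi^{-1}(s)$ around $(0,x_2)$, together with the on-diagonal upper bound $\|u\|_\infty\le c/V(\phi^{-1}(s))$ from \eqref{a-two-sidedup}, produces exactly \eqref{e:Holder} (and \eqref{e:Holder2} is the special case $t=s$, using symmetry of $p$ to dispatch the $y$-variable). Once joint continuity is established, the exceptional set $\mathscr{N}$ can be emptied by selecting the canonical continuous representative on a dense countable set and extending. The truncated case $T<\infty$ is identical, since the iteration only uses scales $\phi^{-1}(s)\le \phi^{-1}(T)$, and conservativeness is hypothesized in that setting. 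The main obstacle is the oscillation lemma itself, and specifically the control of contributions from long jumps out of $B(x_0,R)$: the tails coming from concentric annuli must be summed geometrically, and this is precisely where Assumption~\ref{assmp2} (doubling of $\phi$), rather than merely the one-sided growth of Assumption~\ref{assmp11}, is essentially used.
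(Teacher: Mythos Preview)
Your proposal is correct and follows essentially the same route as the paper: the paper also derives the two key space--time estimates (a lower hitting probability for the space--time process and the long--jump tail bound via the L\'evy system and Proposition~\ref{jump}), and then defers to the Bass--Levin/Chen--Kumagai oscillation argument of \cite[Theorem 4.11]{CK} to obtain H\"older continuity of bounded parabolic functions, which is applied to $p(\cdot,\cdot,y_0)$ exactly as you describe. The paper's write-up is terser (it simply cites \cite{CK} for the oscillation lemma), but the ingredients and logical structure coincide with yours.
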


Using Proposition \ref{p:Holder_estimates}, we can get
\begin{theorem}[{\bf Zero-One Law for Tail Events}]\label{t:01tail}
Let $p(t,x,y)$ satisfy Assumptions $\ref{assmp1}$, $\ref{assmp11}$ and $\ref{assmp2}$ above, and let $A$ be a tail event. Then, either $\Pp^x(A)$ is $0$ for all $x$ or else it is $1$ for all $x \in M$.
\end{theorem}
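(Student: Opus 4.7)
The plan is to prove the statement in two stages: first, show that $x \mapsto \Pp^x(A)$ is constant on $M$; second, upgrade this to $\Pp^x(A) \in \{0,1\}$ via Doob's martingale convergence theorem. Conservativeness of $X$ (Proposition \ref{con}) and the fact that the exceptional set can be taken empty under Assumptions \ref{assmp1}, \ref{assmp11} and \ref{assmp2} (noted after Proposition \ref{p:Holder_estimates}) will be used freely.

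\emph{Step 1 (spatial invariance).} Fix a tail event $A$ and $t > 0$. Since $A \in \sigma(X_s : s \ge t) = \theta_t^{-1}(\sigma(X_s : s \ge 0))$, there exists an event $A_t$ with $A = \theta_t^{-1}(A_t)$ a.s., and the Markov property yields
\[
\Pp^x(A) = \int_M p(t, x, z)\, g_t(z)\,\mu(dz), \qquad g_t(z) := \Pp^z(A_t).
\]
Since $0 \le g_t \le 1$, for any $x, y \in M$,
\[
|\Pp^x(A) - \Pp^y(A)| \le \int_M |p(t,x,z) - p(t,y,z)|\, \mu(dz).
\]
I would split this integral into $B(x, R)$ and $B(x, R)^c$ for a radius $R > 2 d(x,y)$. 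On $B(x,R)$, the H\"older estimate \eqref{e:Holder2} combined with volume doubling \eqref{eqn:univd*} bounds the contribution by $c\,(R/\phi^{-1}(t))^{d_2}\,(d(x,y)/\phi^{-1}(t))^\theta$. On $B(x,R)^c$, a dyadic annular decomposition $\{2^k R \le d(x, \cdot) < 2^{k+1}R\}$ together with the off-diagonal upper bound in \eqref{a-two-sidedup}, \eqref{eqn:univd*} and \eqref{assum-2} yields a bound of order $t/\phi(R)$; the same order holds at $y$ using $B(x,R) \supset B(y, R/2)$ and the doubling of $\phi$. Choosing $R = \phi^{-1}(t^{1+\varepsilon})$ with sufficiently small $\varepsilon > 0$, the explicit power laws in \eqref{eqn:univd*} and \eqref{assum-2} force both contributions to vanish as $t \to \infty$. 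Since the left-hand side does not depend on $t$, it equals $0$.

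\emph{Step 2 (zero-one).} For every $r > 0$, tailness gives $A \in \sigma(X_s : s \ge t + r) = \theta_t^{-1}(\sigma(X_s : s \ge r))$, so $A_t$ can be chosen in $\bigcap_{r > 0}\sigma(X_s : s \ge r)$; that is, $A_t$ is itself a tail event. Applying Step 1 to $A_t$ shows that $g_t$ is identically equal to some constant $c_t$, and integrating against $p(t, x, \cdot)\mu$ identifies $c_t$ as $c := \Pp^x(A)$, independent of $t$. Hence $\Ee^x[\indyk{A}\mid \mathscr{F}_t] = g_t(X_t) = c$ for every $t \ge 0$, while the martingale convergence theorem sends the left-hand side to $\indyk{A}$ $\Pp^x$-a.s. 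Therefore $\indyk{A} \equiv c$ $\Pp^x$-a.s., forcing $c \in \{0,1\}$.

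\emph{Main obstacle.} The delicate step is the off-diagonal control on $B(x,R)^c$: unlike the Gaussian setting, the heat kernel decays only polynomially in space, so $R$ must grow faster than $\phi^{-1}(t)$ to make $t/\phi(R)$ small, yet the H\"older-based estimate on $B(x,R)$ then carries the inflating factor $(R/\phi^{-1}(t))^{d_2}$. Balancing the two requires the quantitative power-law forms of volume doubling \eqref{eqn:univd*} and of $\phi$ from \eqref{assum-2}; this is where the assumptions enter essentially.
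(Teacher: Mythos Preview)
Your argument is correct and complete; the near/far split in Step~1 does balance as you claim (with $\eps<\theta d_3/(d_2 d_4)$), and Step~2 is a clean way to finish once spatial constancy is known.

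The paper proceeds in the opposite order. It fixes a single point $x_0$, uses the martingale convergence theorem together with the Markov property and a strong-Feller estimate $|P_{t_1}f(x)-P_{t_1}f(x_0)|<\eps\|f\|_\infty$ (for $x$ in a ball of radius $c_1\phi^{-1}(t_0)$, and $t_1$ large) to obtain $|\Pp^{x_0}(A)-\Pp^{x_0}(A)^2|\le 8\eps$, whence $\Pp^{x_0}(A)\in\{0,1\}$; only then does it invoke continuity of $x\mapsto \Pp^x(A)$ and connectedness of $M$ to extend to all $x$. Your approach inverts this: you first prove constancy of $x\mapsto\Pp^x(A)$ directly, and then your Step~2 observation that $g_t\equiv c$ forces the conditional-expectation martingale to be deterministically $c$ is a slicker route to $c\in\{0,1\}$ than the paper's $\Pp(A)=\Pp(A)^2$ computation. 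The underlying analytic content is the same---both arguments rest on the H\"older estimate of Proposition~\ref{p:Holder_estimates} and the off-diagonal upper bound---and your near/far split is essentially what the paper needs (but does not spell out) to justify its strong-Feller claim. One small point: the assertion that $A_t$ is itself a tail event relies on $\theta_t$ being surjective on path space (so that $\theta_t^{-1}(A_t)=\theta_t^{-1}(C_r)$ forces $A_t=C_r$); this holds on the canonical c\`adl\`ag space, but is worth making explicit. The paper's route avoids this by never needing $A_t$ to be tail.
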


For an open set $D$, we define
\begin{equation}
\label{e:hkos1}
p^D(t,x,y)  :=  p(t,x,y) - \Ee^x \big( \>p(t - \tau_D,
X_{\tau_D},y) : \tau_D < t\big),\quad
t>0, x,y \in D
\end{equation}
where $\tau_{D}:=\inf\{s>0: X_s\notin D\}.$
Using the strong Markov property of $X$, it is easy to verify that
$p^D(t,x,y)$ is the transition density for $X^D$,
the subprocess of $X$ killed upon leaving an open set $D$.
$p^D(t,x,y)$ is also called
the Dirichlet heat kernel of the process $X$ killed on exiting $D$.
The following two statements present a lower bound for the near diagonal estimate of Dirichlet heart kernels
and detailed controls of the distribution of the maximal process.

\begin{proposition}\label{w-ndle}
If Assumptions $\ref{assmp1}$, $\ref{assmp11}$ and $\ref{assmp2}$ hold, then there exist constants $\delta_0, c_0>0$ such that for any $x\in M$ and $r>0$,
\begin{equation}\label{e:pDlower}
p^{B(x,r)}(\delta_0 \phi (r), x',y')\ge c_0 V(r)^{-1},\quad x',y'\in B(x,r/2).
\end{equation}
Furthermore, if Assumptions $\ref{assmp11}$ and $\ref{assmp2}$ are satisfied, Assumption $\ref{assmp1}$ only holds for $T<\infty$ and the process $X$ is conservative, then \eqref{e:pDlower} holds for all $x\in M$ and $r\ge 0$ with $\delta_0 \phi (r)\in (0,T)$.
\end{proposition}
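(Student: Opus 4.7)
My plan is to exploit the Dynkin--Hunt formula \eqref{e:hkos1},
\[
p^{B}(t,x',y') \,=\, p(t,x',y') \,-\, \Ee^{x'}\!\left[p(t-\tau_B, X_{\tau_B}, y');\,\tau_B<t\right],\qquad B := B(x,r),
\]
with the choice $t = \delta_0\phi(r)$ for a sufficiently small $\delta_0\in(0,1)$. The main term will be bounded below by $C\delta_0/V(r)$ using \eqref{a-two-sidedlow} and the doubling of $V$ and $\phi$; the correction term will be bounded above by $C\delta_0^{\,2}/V(r)$ using \eqref{a-two-sidedup} together with a uniform exit-time estimate $\Pp^{x'}(\tau_B<t)\le C\delta_0$. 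Shrinking $\delta_0$ then makes the correction negligible relative to the main term, yielding the asserted lower bound with $c_0\propto\delta_0$.

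For the main term, I split the lower HKE \eqref{a-two-sidedlow} according to whether $V(\phi^{-1}(t))^{-1}$ or $t/(V(d)\phi(d))$ with $d:=d(x',y')$ is smaller. In the former case, since $\phi^{-1}(\delta_0\phi(r))\le r$, $p(t,x',y')\ge C_2/V(\phi^{-1}(t))\ge C_2/V(r)$; in the latter, since $d<r$ and $V,\phi$ are increasing, $V(d)\phi(d)\le V(r)\phi(r)$, giving $p(t,x',y')\ge C_2 t/(V(r)\phi(r))=C_2\delta_0/V(r)$. Either way,
$p(\delta_0\phi(r),x',y')\ge C_2\delta_0/V(r).$
For the correction term, the key geometric fact is that $X_{\tau_B}\in B^c$ and $y'\in B(x,r/2)$ force $d(X_{\tau_B},y')\ge r/2$, so by \eqref{a-two-sidedup} and doubling,
\[
p(s,X_{\tau_B},y') \,\le\, \frac{C_1 s}{V(r/2)\phi(r/2)} \,\le\, \frac{Cs}{V(r)\phi(r)}, \qquad 0<s<t,
\]
whence the correction term is at most $(Ct/(V(r)\phi(r)))\,\Pp^{x'}(\tau_B<t)=(C\delta_0/V(r))\,\Pp^{x'}(\tau_B<t)$.

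The main obstacle is the uniform exit-time bound $\Pp^{x'}(\tau_B\le t)\le C\delta_0$. I would use the decomposition
\[
\{\tau_B\le t\} \,\subset\, \{X_t\notin B(x,3r/4)\} \,\cup\, \{\tau_B\le t,\,X_t\in B(x,3r/4)\}.
\]
For the first event, $x'\in B(x,r/2)$ gives $d(x',y)\ge r/4$ for every $y\notin B(x,3r/4)$, so \eqref{a-two-sidedup} combined with the tail estimate $\int_{B(x',r/4)^c}(V(d(x',y))\phi(d(x',y)))^{-1}\,\mu(dy)\le C/\phi(r)$ (which follows by dyadic annular decomposition using volume doubling and the strict lower growth of $\phi$ encoded in Assumption \ref{assmp11}) yields $\Pp^{x'}(X_t\notin B(x,3r/4))\le Ct/\phi(r)=C\delta_0$. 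For the second event, the strong Markov property at $\tau_B$ and the bound $d(z,y)\ge r/4$ valid for $z\in B^c$ and $y\in B(x,3r/4)$, together with \eqref{a-two-sidedup} and doubling, give $\sup_{z\in B^c,\,s\le t}\Pp^z(X_s\in B(x,3r/4))\le C\delta_0$, so this probability is at most $C\delta_0\cdot\Pp^{x'}(\tau_B\le t)$. Rearranging, $(1-C\delta_0)\,\Pp^{x'}(\tau_B\le t)\le C\delta_0$, hence $\Pp^{x'}(\tau_B\le t)\le C'\delta_0$ for $\delta_0$ small. Combining,
\[
p^B(\delta_0\phi(r),x',y') \,\ge\, \frac{C_2\delta_0}{V(r)} - \frac{CC'\delta_0^{\,2}}{V(r)} \,\ge\, \frac{C_2\delta_0}{2V(r)}
\]
once $\delta_0$ is small enough, and we set $c_0:=C_2\delta_0/2$. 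The ``furthermore'' clause under Assumption \ref{assmp1} with $T<\infty$ and $X$ conservative requires no new idea: every invocation of \eqref{a-two-sidedup}--\eqref{a-two-sidedlow} above occurs at a time in $(0,\delta_0\phi(r)]$, so the argument goes through verbatim whenever $\delta_0\phi(r)\in(0,T)$.
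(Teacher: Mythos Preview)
Your argument is correct but takes a genuinely different route from the paper's. The paper first proves only a \emph{near-diagonal} Dirichlet lower bound: taking $t=\phi(\delta r)$ and restricting to $x',y'\in B(x,\delta r/2)$, the main term $p(t,x',y')\ge C_1/V(\delta r)$ overwhelms the crude correction bound $\sup_{s\le t,\,d(z,y')\ge r/2}p(s,z,y')\le C_2/V(r/2)$ once $\delta$ is small, so no exit-time estimate is needed at this stage; the extension to all of $B(x,r/2)$ at time $\delta_0\phi(r)$ is then obtained by a chaining argument, for which the paper cites \cite[Lemma~2.3]{BBK}. You instead work directly on the full ball $B(x,r/2)$, where the main term is only of order $\delta_0/V(r)$ and hence of the \emph{same} order as the naive correction bound; you compensate by inserting the extra factor $\Pp^{x'}(\tau_B<t)\le C'\delta_0$ (this is essentially \eqref{exit} in the paper), which knocks the correction down to order $\delta_0^2/V(r)$. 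Your approach trades the chaining step and the external reference for the exit-time lemma; the paper's trades the exit-time lemma for chaining. One small point worth making explicit: your decomposition of $\{\tau_B\le t\}$ uses conservativeness (so that $X_t\in M$), which in the main case $T=\infty$ is supplied by Proposition~\ref{con}, just as it is assumed outright in the ``furthermore'' clause.
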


\begin{proposition}\label{lemma-inf}
If Assumptions $\ref{assmp1}$, $\ref{assmp11}$ and $\ref{assmp2}$ hold, then there exist some constants $c_0>0$ and $a^*_1, a^*_2\in (0,1)$ such that for
all $x\in M$, $r>0$ and $n\ge 1$,
\begin{equation}\label{e:ppnn}{a^*_1}^n\le \Pp^x(\sup_{0\le s\le c_0 n \phi(r)} d(X_s, x)\le r)\le {a^*_2}^n.\end{equation}
Furthermore, if Assumptions $\ref{assmp11}$ and $\ref{assmp2}$ are satisfied, Assumption $\ref{assmp1}$ only holds for $T<\infty$ and the process $X$ is conservative, then \eqref{e:ppnn} holds for all $x\in M$,  $n\ge 1$ and $r >0$ with $c_0n \phi(r)\le T$.
\end{proposition}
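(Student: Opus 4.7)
The plan is to rewrite the event as $\{\tau_{B(x,r)}>t\}$, with $\tau_{D}:=\inf\{s>0:X_s\notin D\}$, and then bound $\Pp^x(\tau_{B(x,r)}>t)$ above and below on a single characteristic time scale, iterating the Markov property to get exponential decay in $n$.

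For the lower bound I would set $T_0:=\delta_0\phi(r)$ with $\delta_0$ as in Proposition~\ref{w-ndle}. Integrating the Dirichlet lower bound \eqref{e:pDlower} over $B(x,r/2)$ gives, uniformly in $z\in B(x,r/2)$,
\[
\Pp^z\bigl(\tau_{B(x,r)}>T_0,\,X_{T_0}\in B(x,r/2)\bigr)\ge c\,V(r)^{-1}\mu(B(x,r/2))\ge\eta
\]
for some $\eta>0$ independent of $x,r$, by \eqref{e:BV} and the doubling of $V$. An induction using the Markov property at times $T_0,2T_0,\dots$, conditioning at each step on $\{X_{kT_0}\in B(x,r/2)\}$, then yields $\Pp^x(\tau_{B(x,r)}>nT_0)\ge\eta^n$.

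For the upper bound I would fix $K\ge 2$ (to be chosen) and set $T_1:=\phi(Kr)$. Since $\{\tau_{B(x,r)}>T_1\}\subseteq\{X_{T_1}\in B(x,r)\}$, the heat kernel upper bound \eqref{a-two-sidedup} gives, uniformly in $z\in M$,
\[
\Pp^z(\tau_{B(x,r)}>T_1)\le\int_{B(x,r)}p(T_1,z,y)\,\mu(dy)\le\frac{C_1\mu(B(x,r))}{V(Kr)}\le\frac{c'}{K^{d_1}},
\]
by \eqref{e:BV} and \eqref{eqn:univd*}. Choosing $K$ so that $\gamma:=c'/K^{d_1}<1$ and iterating the Markov property at times $T_1,2T_1,\dots$ (the bound $\gamma$ being uniform in the starting point) gives $\Pp^x(\tau_{B(x,r)}>nT_1)\le\gamma^n$.

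Finally I would merge the two estimates into a common time unit. By the doubling of $\phi$ in \eqref{assum-2}, $T_1\le c_4K^{d_4}\phi(r)=:c_0\phi(r)$, which fixes the constant $c_0$ of the statement. The upper bound becomes $\Pp^x(\tau_{B(x,r)}>nc_0\phi(r))\le\gamma^n$, so $a^*_2:=\gamma$ works. For the lower bound I would cover $[0,nc_0\phi(r)]$ by $N:=\lceil nc_0/\delta_0\rceil\le nc_0/\delta_0+1$ steps of length $T_0$, obtaining $\Pp^x(\tau_{B(x,r)}>nc_0\phi(r))\ge\eta^N\ge\eta\cdot\bigl(\eta^{c_0/\delta_0}\bigr)^n$, which is of the required form $(a^*_1)^n$ after absorbing the prefactor. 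The finite-$T$ version is identical, the hypothesis $c_0n\phi(r)\le T$ being exactly what is needed to keep all invoked times inside the window of validity of the heat kernel bounds. The hardest part is this final bookkeeping to align the two natural time scales $\delta_0\phi(r)$ and $\phi(Kr)$ into a single constant, but the doubling of $\phi$ makes it routine.
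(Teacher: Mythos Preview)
Your proof is correct and follows essentially the same route as the paper: the lower bound is obtained by chaining the Dirichlet heat kernel lower bound of Proposition~\ref{w-ndle} over successive time steps of length $\delta_0\phi(r)$ while keeping the endpoint in $B(x,r/2)$, and the upper bound is obtained by iterating a one-step estimate that the process exits a ball of scale $r$ by time comparable to $\phi(r)$ with probability bounded away from zero. Your derivation of the one-step upper bound via the on-diagonal estimate $p(T_1,z,y)\le C_1/V(\phi^{-1}(T_1))$ is in fact the cleaner justification (the paper cites \eqref{exit}, which bounds the exit probability from above rather than below; the intended argument is the one you give, as used elsewhere in the paper for \eqref{eq:rlamwfq}), and your final alignment of the two time scales via the doubling of $\phi$, together with the absorption of the prefactor $\eta$ by taking $a^*_1=\eta^{1+c_0/\delta_0}$, is exactly what is needed.
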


Let us introduce a space-time process $Z_s=(V_s, X_s)$, where $V_s=V_0+s$.  The law of the space-time process $s\mapsto Z_s$ starting from $(t,x)$ will be denoted by $\Pp^{(t,x)}$.
For any $r,t, \delta>0$ and $x \in M$, we define
$$
Q_\delta(t,x,r)=[t,t+\delta\phi(r)]\times B(x,r).
$$
We say that a non-negative Borel measurable function $h(t,x)$ on $[0,\infty)\times M$ is parabolic in a relatively open subset $D$ of $[0,\infty)\times M$, if for every relatively compact open subset $D_1\subset D$,
$h(t,x)=\Ee^{(t,x)} h(Z_{\hat \tau_{D_1}})$ for every $(t,x)\in D_1$, where $\hat \tau_{D_1}=\inf\{s>0: Z_s\notin D_1\}.$

We now state the following parabolic Harnack inequality.
\begin{proposition}\label{thm:PHI_hw}
 Assume that Assumptions $\ref{assmp1}$, $\ref{assmp11}$ and $\ref{assmp2}$ hold.
  For every $0<\delta <1$, there exists $c_1>0$ such that for every $z\in M$, $R>0$ and every non-negative function $h$ on $[0,\infty)\times M$, that is parabolic on $[0, 3\delta \phi(R)]\times B(z,2R)$,
  $$\sup_{(t,y)\in Q_\delta(\delta \phi(R), z,R)} h(t,y)\le c_1 \inf_{y\in B(z,R)} h(0,y).$$
\end{proposition}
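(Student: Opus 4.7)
The plan is to deduce the parabolic Harnack inequality from the two-sided heat kernel bounds by the standard strategy for symmetric jump processes: establish a lower Harnack-type bound on $h(0,y_0)$ and a complementary super-mean-value inequality on $\sup_{Q_\delta} h$, with a common space-time integral of $h$ in between, and then combine them.

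For the lower Harnack step, I would first chain the near-diagonal Dirichlet heat-kernel bound of Proposition \ref{w-ndle} via Chapman-Kolmogorov through a bounded number of intermediate balls inside $B(z,2R)$ to obtain
$$p^{B(z,2R)}(t,y_0,w)\ge c_\delta V(R)^{-1},\quad y_0,w\in B(z,R),\;t\in[\delta\phi(R),2\delta\phi(R)].$$
The number of chaining steps depends only on $\delta$ and on the doubling parameters from Assumptions \ref{assmp11} and \ref{assmp2}. Because $(0,y_0)$ lies in the parabolicity domain, applying the defining identity $h(0,y_0)=\Ee^{y_0}h(Z_{\hat\tau_{D_1}})$ to a relatively compact exhaustion of $[0,t_1)\times B(z,2R)$ and using $h\ge 0$ to restrict to the event $\{t_1<\tau_{B(z,2R)}\}$ yields
$$h(0,y_0)\ge\int_{B(z,2R)}p^{B(z,2R)}(t_1,y_0,w)\,h(t_1,w)\,\mu(dw)\ge\frac{c_\delta}{V(R)}\int_{B(z,R)}h(t_1,w)\,\mu(dw).$$
Averaging in $t_1\in[\delta\phi(R),2\delta\phi(R)]$ converts this into a bound on $h(0,y_0)$ in terms of the space-time average of $h$ over $Q_\delta(\delta\phi(R),z,R)$.

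The principal obstacle is the complementary super-mean-value estimate
$$\sup_{(t,y)\in Q_\delta(\delta\phi(R),z,R)} h(t,y)\le \frac{C}{V(R)\phi(R)}\iint_{Q'} h(s,w)\,\mu(dw)\,ds,$$
where $Q'$ is a slightly larger cylinder whose space-time integral can in turn be controlled by the lower Harnack bound at appropriate base points. This is the nonlocal analogue of Moser's $L^1$-to-$L^\infty$ estimate for parabolic functions, and I would prove it by a Moser/De Giorgi iteration adapted to the Dirichlet form \eqref{df}: the jump-measure bound from Proposition \ref{jump} controls the long-range cross terms, the upper heat-kernel estimate in Assumption \ref{assmp1} supplies the Nash-type inequality that powers the iteration, Proposition \ref{lemma-inf} provides the time-localization via exit-time bounds, and the H\"older continuity in Proposition \ref{p:Holder_estimates} lets one pass from $L^p$ space-time averages to pointwise values at the last step. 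Once both bounds are in place, the doubling of $V$ and $\phi$ lets one reconcile the slightly different base cylinders up to constants depending only on $\delta$; taking the supremum over $(t_1,y_1)\in Q_\delta(\delta\phi(R),z,R)$ on the left and the infimum over $y_0\in B(z,R)$ on the right then delivers the stated inequality.
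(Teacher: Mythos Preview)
Your lower-Harnack step is fine and is indeed part of any proof. The gap is in the upper step: the super-mean-value inequality you state,
\[
\sup_{(t,y)\in Q_\delta(\delta\phi(R),z,R)} h(t,y)\le \frac{C}{V(R)\phi(R)}\iint_{Q'} h(s,w)\,\mu(dw)\,ds,
\]
with $Q'$ merely a slightly larger cylinder, is \emph{false} for nonlocal parabolic functions. When you run Moser/De~Giorgi with a spatial cut-off $\eta$ supported in $B(z,2R)$, the cross terms in the energy are
\[
\int_{B(z,2R)}\int_{B(z,2R)^c}\eta(x)^2 h(t,x)\,h(t,w)\,J(x,w)\,\mu(dw)\,\mu(dx),
\]
and the bound $J(x,w)\asymp (V(d(x,w))\phi(d(x,w)))^{-1}$ from Proposition~\ref{jump} only converts this into a tail functional of the type
\[
\mathrm{Tail}(h;z,R,t)\;=\;\int_{B(z,2R)^c}\frac{h(t,w)}{V(d(z,w))\phi(d(z,w))}\,\mu(dw),
\]
which depends on $h$ over the whole space and is \emph{not} absorbed into $\iint_{Q'}h$. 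Without controlling this tail, the iteration does not close; and for general nonlocal forms the $L^1\!\to\!L^\infty$ bound over local cylinders is known to fail. This is precisely the point where the ``upper jump smoothness'' condition (\textbf{UJS}) enters, and why the paper isolates it.

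The paper's route is different from yours and avoids Moser iteration altogether. It first observes that the two-sided bounds on $J$ from Proposition~\ref{jump} imply \textbf{UJS}, and then invokes the probabilistic proofs of \cite[Theorem~4.12]{CK1} and \cite[Theorem~5.2]{CKK3}: these use the L\'evy system formula, the exit-time estimate \eqref{diff}, and a balayage/hitting-probability argument to compare $h$ at two space-time points directly (first for bounded $h$ and small $\delta$, then extended by chaining and a truncation argument). If you want to salvage your analytic approach, you would need to (i) prove the local boundedness estimate with an explicit tail term, (ii) prove a weak Harnack inequality bounding $\mathrm{Tail}(h;z,R)$ by $\inf_{B(z,R)}h(0,\cdot)$ (this is where \textbf{UJS} or the explicit $J$-bounds are essential), and (iii) combine them. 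That is a genuine program (cf.\ Felsinger--Kassmann in the Euclidean $\phi(r)=r^\beta$ case), but it is not what you wrote, and it is substantially longer than the citation the paper gives.
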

By Assumptions $\ref{assmp1}$, $\ref{assmp11}$ and $\ref{assmp2}$ and Proposition \ref{jump}, the density $J(x,y)$ of the jump measure $n(dx,dy)$ satisfies the following upper jump smoothness
 ({\bf UJS}): there exists a constant $c_1>0$ such that
for $\mu$-a.e. $x, y\in M$,
$$
 J(x,y) \le \frac{c_1}{V(r)}
\int_{B(x,r)} J(z,y)\,\mu(dz) \quad \hbox { whenever $r\le \frac 12
d(x,y)$} . $$ Noting that $J(x,y)=\lim\limits_{r\to0}
\frac{1}{\mu(B(x,r))} \int_{B(x,r)} J(z,y)\,\mu(dz)$ for $\mu$-a.e.
$x, y\in M$, ({\bf UJS}) is a kind of smooth assumption on the upper
bound of jump kernel $J(x,y)$. Let $c$ be the constant in Assumption
$\ref{assmp2}$, and $c_0\in(0,1)$ be the constant such that for
almost all $x\in M$ and $r>0$,
\begin{equation}\label{diff}\Pp^x(\tau_{B(x,r/2)}\le c_0\phi(r))\le 1/2,\end{equation}
see e.g. \eqref{exit} below. Since the density $J(x,y)$ of the jump measure $n(dx,dy)$ satisfies ({\bf UJS}),
Proposition \ref{thm:PHI_hw} can be proved by following
the arguments of \cite[Theorem 4.12]{CK1} and \cite[Theorem 5.2]{CKK3}.
See \cite[Appendix B]{CK1} and \cite[Section 5]{CKK3} for more details. In fact, as explained in the first paragraph of \cite[Theorem 5.2]{CKK3}
one can  first consider the case that $h$ is non-negative and bounded
on $[0, \infty)\times F$ and establish the result for $\delta \leq c_0/c$.
Once this is done, one can extend it to all $\delta <1$ and  any non-negative parabolic function (not necessarily bounded)  by a simple chaining argument and
the argument in the step 3 of the proof of  \cite[Theorem 5.2]{CKK3}, respectively.

\section{Laws of the Iterated Logarithm for Sample Paths}\label{Sect3}
In this section, we discuss LILs for sample paths of the process $X$.
Instead of assuming full heat kernel estimates as in Assumption \ref{assmp1}, we give
the estimates that are needed in each statement.
Throughout this paper (except Proposition \ref{space-c} below), we will always assume that
the reference measure $\mu$ satisfies 
the uniform volume doubling property in \eqref{e:BV} and that $V$ is a strictly increasing function that satisfies \eqref{v-d}.

\subsection{Upper bound for limsup behavior}
In this subsection we assume that the heat kernel $p(t,x,y)$ on
$(M,d,\mu)$ satisfies the following upper bound estimate for all $x
\in M\setminus\mathscr{N} $, $\mu$-almost all $y\in M$ and all $t
\in (a,b)$ with $a<b$,
\begin{equation}\label{upper-1} p(t,x,y)\le \frac{C \, t}{V(d(x,y))\phi(d(x,y))},\end{equation}
where $C>0$, and $\phi:\R_+\to \R_+$ is a strictly increasing
functions satisfying \eqref{assum-2}.

 \begin{theorem}\label{sup-1} Assume that the process $X$ is conservative. Then the following statements hold.

\begin{itemize}
\item[(1)] If  $
a=0$ and $\varphi$ is an increasing function on $(0,1)$ such that
\begin{align}\label{e:asump-sup1}\int_0^1
\frac{1}{\phi\big({\varphi(t)}\big)}\,dt<\infty,\end{align} then
$$
\limsup_{t\to0} \frac{\sup_{0\le s\le t} d(X_s, x)}{\varphi(t)}=0,
 ~~~\quad\,
\Pp^x\mbox{-a.e. } \omega,~~\forall
x\in M \setminus \mathscr{N}.
$$
\item[(2)] If
$b=\infty$ and $\varphi$ is an increasing function on on
$(1,\infty)$ such that
\begin{align*}\label{e:asump-sup2}\int_1^\infty
\frac{1}{\phi\big({\varphi(t)}\big)}\,dt<\infty,\end{align*} then
$$\limsup_{t\to \infty} \frac{\sup_{0\le s\le t} d(X_s,
x)}{\varphi(t)}=0,
 ~~~\quad\,
\Pp^x\mbox{-a.e. } \omega,~~\forall
x\in M \setminus \mathscr{N}.$$
\end{itemize}

 \end{theorem}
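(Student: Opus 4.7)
The plan is to reduce both assertions to a Borel--Cantelli argument along a geometric sequence of times, with the decisive input being the maximal tail bound
\[
\Pp^x\bigl(\sup_{0\le s\le t} d(X_s,x)\ge r\bigr)\le \frac{Ct}{\phi(r)},\qquad t\in(a,b),\ r>0.
\]
To obtain this I would first derive the one-time estimate $\Pp^x(d(X_t,x)\ge r)\le Ct/\phi(r)$ by integrating \eqref{upper-1} against $\mu$ on $\{d(x,y)\ge r\}$, decomposing into dyadic annuli $B(x,2^{k+1}r)\setminus B(x,2^k r)$, and summing with the help of volume doubling \eqref{v-d} together with the lower power-type bound for $\phi$ from \eqref{assum-2}; this yields
\[
\int_{d(x,y)\ge r}\frac{\mu(dy)}{V(d(x,y))\phi(d(x,y))}\le C\sum_{k=0}^\infty \frac{V(2^{k+1}r)}{V(2^kr)\phi(2^kr)}\le \frac{C'}{\phi(r)}.
\]
The one-time bound is then upgraded by the standard strong Markov argument: with $\tau:=\tau_{B(x,r)}$,
\[
\Pp^x(\tau\le t)\le \Pp^x(d(X_t,x)\ge r/2)+\sup_{y\in M,\,s\le t}\Pp^y(d(X_{t-s},y)\ge r/2)\,\Pp^x(\tau\le t),
\]
and the tail term is absorbed whenever $Ct/\phi(r)\le 1/2$ (the claimed bound being trivial otherwise). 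Conservativeness of $X$ is used here to identify $\{\tau\le t\}$ with $\{\sup_{0\le s\le t}d(X_s,x)\ge r\}$.

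For part~(1), set $t_n=2^{-n}$, fix $\eps>0$, and consider
\[
A_n=\Bigl\{\sup_{0\le s\le t_n}d(X_s,x)>\eps\,\varphi(t_{n+1})\Bigr\}.
\]
Combining the maximal tail bound with the doubling of $\phi$ from \eqref{assum-2} used to absorb $\eps$ gives
\[
\Pp^x(A_n)\le \frac{Ct_n}{\phi(\eps\,\varphi(t_{n+1}))}\le \frac{C'\,t_{n+1}}{\phi(\varphi(t_{n+1}))}.
\]
Because $\varphi$ and $\phi$ are increasing, $s\mapsto 1/\phi(\varphi(s))$ is non-increasing, so
\[
\int_{t_{n+2}}^{t_{n+1}}\frac{ds}{\phi(\varphi(s))}\ge \frac{t_{n+2}}{\phi(\varphi(t_{n+1}))},
\]
and summing in $n$ together with \eqref{e:asump-sup1} yields $\sum_n\Pp^x(A_n)<\infty$. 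Borel--Cantelli then shows that $\Pp^x$-almost surely only finitely many $A_n$ occur, and for $t\in[t_{n+1},t_n]$ the monotonicity of the supremum and of $\varphi$ gives $\sup_{0\le s\le t}d(X_s,x)/\varphi(t)\le\eps$ for all sufficiently small $t$. Letting $\eps$ run through a countable null sequence completes part~(1); part~(2) is identical with $t_n=2^n$ and $A_n=\{\sup_{0\le s\le t_{n+1}}d(X_s,x)>\eps\,\varphi(t_n)\}$.

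The main obstacle I foresee is transferring the finite-integral hypothesis \eqref{e:asump-sup1} into summability along the dyadic sequence without imposing further regularity on $\varphi$; the argument above does this purely via the monotonicity of $1/\phi(\varphi(\cdot))$ combined with the doubling of $\phi$ granted by \eqref{assum-2}, which is exactly what lets us strip off the arbitrary constant $\eps$ hidden inside $\phi(\eps\,\varphi(\cdot))$. The derivation of the maximal tail bound from \eqref{upper-1} is otherwise routine, though one should verify that the cases $t_n\to 0$ (for~(1)) and $t_n\to\infty$ (for~(2)) actually fall inside the hypothesized interval $(a,b)$, which they do since $a=0$ and $b=\infty$ in the respective parts.
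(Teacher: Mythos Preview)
Your proposal is correct and follows essentially the same route as the paper: dyadic annulus decomposition of $\int_{B(x,r)^c}p(t,x,y)\,\mu(dy)$ using volume doubling and the lower power bound on $\phi$, upgrade to the maximal tail via the strong Markov property, Borel--Cantelli along $t_n=2^{\pm n}$, and finally the doubling of $\phi$ from \eqref{assum-2} to remove the $\eps$. The only point to watch is your maximal bound for part~(2): your absorption inequality needs the one-time estimate at times $t-s$ for all $s\le t$, but the hypothesis there only gives \eqref{upper-1} on $(a,\infty)$ with $a$ possibly positive, so $t-s$ may fall below $a$; the paper avoids this by running the decomposition at time $2t$ rather than $t$ (so that $2t-s\in[t,2t]\subset(a,\infty)$ once $t>a$) and bounding both pieces directly instead of absorbing.
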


\begin{proof} We only prove (1), since (2) can be verified similarly.
Let us first check that
there is a constant $c_1>0$ such that for all $x\in M \setminus \mathscr{N}$, $r>0$ and $t\in(0,b)$,
\begin{equation}\label{point}\int_{B(x,r)^c}p(t,x,z)\,\mu(dz)\le  \frac{c_1t}{\phi(r)}.\end{equation}  If $t\ge \phi(r)$, then
the right hand side of \eqref{point} is greater than $1$ by taking $c_1>1$, so
we may assume
 that $t\le \phi(r)$. Without loss of generality, we also assume that $b=1$. It follows from \eqref{upper-1} and the increasing property of $V$ that, for
all $x\in M \setminus \mathscr{N}$, $\mu$-almost all  $z\in M$ with $d(x,z)\ge s$ and each  $t\in (0,1)$,
 $$p(t,x,z)\le \frac{C t}{V(s)\phi(s)}.$$
This upper bound,  along with  the uniform volume doubling property of $\mu$ (e.g.\  
\eqref{e:BV} and \eqref{v-d-1})
and \eqref{assum-2},
 yields that
 \begin{align*}
\int_{B(x,r)^c} p(t,x,z)\,\mu(dz)&\le \sum_{k=0}^\infty \int_{B(x,\theta^{k+1} r)\setminus B(x,\theta^k r)} p(t,x,z)\,\mu(dz)\\
&\le \sum_{k=0}^\infty \frac{C}{V(\theta^kr)} \frac{ t}{\phi(\theta^kr)}\mu \Big(B(x,\theta^{k+1} r)\setminus B(x,\theta^k r)\Big)\\
&\le \sum_{k=0}^\infty  \frac{c_2 V(\theta^{k+1}r)}{V(\theta^kr)}\frac{ t}{\phi(\theta^kr)}
\le c_3\sum_{k=0}^\infty c_0^{k}\frac{ t}{\phi(r)}
\le  \frac{ c_4t}{\phi(r)}. \end{align*}

 Recall that $\tau_{B(x,r)}=\inf\{t>0: X_t\notin B(x,r)\}.$
    By \eqref{point} and the strong Markov property and the conservativeness of $X$,  for
   all $x\in M \setminus \mathscr{N}$, $t\in(0,1)$ and $r>0$,
   \begin{equation}\label{exit}\begin{split} \Pp^x&(\tau_{B(x,r)}\le t)\\
   =&\Pp^x(\tau_{B(x,r)}\le t, X_{2t}\in B(x,r/2)^c)+\Pp^x(\tau_{B(x,r)}\le t, X_{2t}\in B(x,r/2))\\
   \le &\Pp^x(\tau_{B(x,r)}\le t, d(X_{2t},x)\le r/2)+\Pp^x(d(X_{2t},x)\ge r/2)\\
   \le &\Pp^x(\tau_{B(x,r)}\le t, d(X_{2t},X_{\tau_{B(x,r)}})\ge r/2)+ \frac{ 2 c_1 t}{\phi(r/2)}\\
   \le &\sup_{s\le t, d(z,x)\ge r} \Pp^z (d(X_{2t-s},z)\ge r/2)+ \frac{ 2c_1t}{\phi(r/2)}
   \le  \frac{ c_5 t}{\phi(r/2)}.\end{split}
 \end{equation}
 (Note that the conservativeness is used in the
equality above. Indeed, without the assumption of the conservativeness,
there must be an extra term $$\Pp^x(\tau_{B(x,r)}\le t, \zeta\le 2t)$$
in the right hand side of the equality above, where $\zeta$ is the
lifetime of the process $X$.)

Set $s_k=2^{-k-1}$ for all $k\ge 1$. By \eqref{exit}, we have that, for all $x\in M \setminus \mathscr{N}$
\begin{align*}
\Pp^x(\sup_{0<s\le s_k}d(X_s, x)\ge 2\varphi(s_{
k}))= \Pp^x(\tau_{B(x,2\varphi(s_{
k}))}\le s_{k})\le  \frac{ c_5 s_{k}}{\phi( \varphi(s_{k+1}))} .\end{align*}
By the assumption
\eqref{e:asump-sup1} and the Borel-Cantelli lemma,
 $$\Pp^x(\sup_{0<s\le s_k}d(X_s, x)\le 2\varphi(s_{k}))
 \textrm{ except finite }
 k\ge 1)=1,$$
 which implies that $$
\limsup_{t\to0} \frac{\sup_{0\le s\le t} d(X_s, x)}{\varphi(t)}\le 2,
 ~~~\quad\,
\Pp^x\mbox{-a.e. } \omega,~~\forall
x\in M \setminus \mathscr{N}.
$$ Therefore, the required assertion follows by considering $\varepsilon\varphi(r)$ for small $\varepsilon>0$ instead of $\varphi(r)$ and using \eqref{assum-2}.  \end{proof}

\begin{remark}
 From \eqref{point}, one can easily get similar statements for the limsup
  behavior of $d(X_t,x)$ for both $t\to0$ and $t\to\infty.$
 \end{remark}

\subsection{Lower bound for limsup behavior}

We begin with the assumption that the heat kernel $p(t,x,y)$ on $(M,d,\mu)$ satisfies the following
off-diagonal lower bound estimate: there are constants $a, C>0$ such that for
every $x\in M \setminus \mathscr{N}$,  $\mu$-almost all $y\in M$ and all $t\in(a,\infty)$,
\begin{equation}\label{lower-1} p(t,x,y)\ge  \frac{C \, t}{V(d(x,y))\phi(d(x,y))}, \quad d(x,y)\ge \phi^{-1}(t), \end{equation}
where $V$ and $\phi$ are strictly increasing functions satisfying \eqref{eqn:univd*} and \eqref{doubphiw0}, respectively.
The statement below presents lower bound for the limsup behavior of maximal process for $t\to \infty$.
\begin{theorem}\label{sup-2}
Let $p(t,x,y)$ satisfy the lower bound estimate \eqref{lower-1} above.
If $\varphi$ is an increasing function  on $(1,\infty)$
satisfying \begin{equation}\label{sup-2-con}\int_1^\infty  \frac{1}{\phi(\varphi(t))}\,dt=\infty,\end{equation}
then
for all
$x\in M \setminus \mathscr{N}$ \begin{align}
\label{e:l_bd_limsup1}\limsup_{t\to \infty} \frac{ \sup_{0<s\le t}d(X_s,x) }{\varphi\big({t}\big)
  } = \limsup_{t\to \infty} \frac{ d(X_t,x) }{\varphi(t)
}= \infty,\quad\,
\Pp^x\mbox{-a.e. }
\omega.
\end{align}
\end{theorem}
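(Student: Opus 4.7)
The plan is to fix an arbitrary $K>0$, work along the dyadic sequence $t_n:=2^n$, and show that the events
\[
A_n \;:=\; \bigl\{ d(X_{t_n},\,X_{t_{n-1}}) \ge K\varphi(t_n) \bigr\}
\]
occur infinitely often $\Pp^x$-a.s.\ via the conditional Borel--Cantelli lemma along the filtration $(\mathscr{F}_{t_n})_n$; the triangle inequality and arbitrariness of $K$ will then give \eqref{e:l_bd_limsup1}.

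\textbf{Step 1 (pointwise displacement bound).} From \eqref{lower-1}, together with the doubling of $\phi$ in \eqref{doubphiw0} and the lower scaling of $V$ in \eqref{eqn:univd*}, I would first establish that there is $c_0>0$ such that, for every $y\in M\setminus\mathscr{N}$, every $s>a$, and every $r>0$,
\[
\Pp^y\!\bigl( d(X_s,y)\ge r \bigr)\;\ge\; \min\!\Bigl(c_0,\; \frac{c_0 s}{\phi(r)}\Bigr).
\]
For $r\ge \phi^{-1}(s)$ one integrates the lower bound of $p(s,y,\cdot)$ over an annulus $B(y,\lambda r)\setminus B(y,r)$, choosing $\lambda$ large enough (using the lower scaling of $V$) so that the annulus carries $\mu$-mass comparable to $V(r)$, and then absorbs $\lambda$ into constants via the doubling of $V$ and $\phi$; for $r<\phi^{-1}(s)$ one uses the trivial inclusion $\{d(X_s,y)\ge r\}\supset \{d(X_s,y)\ge \phi^{-1}(s)\}$ and the previous case at $r=\phi^{-1}(s)$.

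\textbf{Step 2 (conditional divergence of probabilities).} By the Markov property at $t_{n-1}$ and Step 1, almost surely,
\[
\Pp^x\!\bigl(A_n\,\big|\,\mathscr{F}_{t_{n-1}}\bigr)
 \;=\; \Pp^{X_{t_{n-1}}}\!\bigl(d(X_{2^{n-1}},X_0)\ge K\varphi(t_n)\bigr)
 \;\ge\; \min\!\Bigl(c_0,\; \frac{c_1(K)\,t_n}{\phi(\varphi(t_n))}\Bigr),
\]
where the upper scaling \eqref{assum-2} of $\phi$ absorbs the factor $K$. Since $1/\phi(\varphi(\cdot))$ is decreasing (both $\phi$ and $\varphi$ being increasing), a standard dyadic Cauchy-condensation comparison shows that \eqref{sup-2-con} is equivalent to
\[
\sum_{n\ge 1}\frac{t_n}{\phi(\varphi(t_n))}=\infty,
\]
so $\sum_n \Pp^x(A_n\mid \mathscr{F}_{t_{n-1}})=\infty$ $\Pp^x$-a.s.

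\textbf{Step 3 (Borel--Cantelli and triangle inequality).} Each $A_n$ is $\mathscr{F}_{t_n}$-measurable, so the L\'evy extension of the Borel--Cantelli lemma yields $\Pp^x(A_n \text{ i.o.})=1$. On $A_n$, the triangle inequality forces
\[
d(X_{t_n},x)\ge \tfrac{K}{2}\varphi(t_n)
\quad\text{or}\quad
d(X_{t_{n-1}},x)\ge \tfrac{K}{2}\varphi(t_n)\ge \tfrac{K}{2}\varphi(t_{n-1}),
\]
the last inequality using monotonicity of $\varphi$. Hence along a (random) subsequence $m\to\infty$ of $(t_n)$ we have $d(X_m,x)/\varphi(m)\ge K/2$, so $\limsup_{t\to\infty} d(X_t,x)/\varphi(t)\ge K/2$ $\Pp^x$-a.s.\ and a fortiori the same for $\sup_{0<s\le t} d(X_s,x)/\varphi(t)$. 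Letting $K\to\infty$ through integers gives \eqref{e:l_bd_limsup1} for every $x\in M\setminus\mathscr{N}$.

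\textbf{Main obstacle.} The substantive work is concentrated in Step 1: with only the off-diagonal lower bound \eqref{lower-1} (no near-diagonal estimate is assumed here), the proof must squeeze the uniform displacement bound out of an annular integration, which requires the careful choice of the radial scale $\lambda$ from the lower volume scaling and the separate treatment at $r\asymp \phi^{-1}(s)$. Steps 2 and 3 are then routine, provided the filtration is taken right-continuous and augmented in the usual way so that the conditional Borel--Cantelli lemma applies.
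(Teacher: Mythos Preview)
Your proposal is correct and follows essentially the same approach as the paper's proof: dyadic times $t_n=2^n$, displacement events $\{d(X_{t_n},X_{t_{n-1}})\ge\cdots\}$, the conditional (L\'evy) Borel--Cantelli lemma, and the triangle-inequality finish. The only cosmetic differences are that the paper builds the lower bound in Step~1 by a Stieltjes integral over the full tail $\{r\ge s\}$ rather than a single annulus, and it carries the threshold $\varphi(2^{k+1})\vee\phi^{-1}(2^{k+1})$ through the event (splitting cases afterwards) instead of packaging the two regimes into the $\min(c_0,\,c_0 s/\phi(r))$ bound as you do; also, in Step~2 you should cite \eqref{doubphiw0} rather than \eqref{assum-2}, since only the doubling of $\phi$ is assumed in this subsection (which is all you need to absorb $K$).
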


\begin{proof}
Without loss of generality, we can assume that
$a=1$ and $\phi(1)=1$.
First, choose $r_0\ge2$ such that $r_0^{-d_1}<c_1$, where $d_1$ and $c_1$ are constants given in \eqref{eqn:univd*}. By
\eqref{eqn:univd*} and \eqref{doubphiw0}, we have that for all $s \ge 1$
\begin{align*}\int_{r\ge s} \frac{1}{V(r)\phi(r)}\,dV(r)&=\sum_{k=0}^\infty \int_{r\in [r_0^ks, r_0^{k+1}s)}\frac{1}{V(r)\phi(r)}\,dV(r)\\
&\ge \sum_{k=0}^\infty \frac{V(r_0^{k+1}s)-V(r_0^{k}s)}{V(r_0^{k+1}s)\phi(r_0^{k+1}s)}\\
&\ge \left(1-\frac{1}{c_1r_0^d}\right)\sum_{k=0}^\infty \frac{1}{\phi(r_0^{k+1}s)}\\
&\ge \frac{1}{c_0}\left(1-\frac{1}{c_1r_0^d}\right)\sum_{k=0}^\infty c^{-(1+\log_2 r_0)(k+1)} \frac{1}{\phi(s)}\\
&=: c_2 \frac{1}{\phi(s)}.\end{align*}
In particular,
\begin{equation}\label{sup-2-1}\inf_{t\ge 1} \int_{r\ge \phi^{-1}(t)} \frac{ t}{V(r)\phi(r)}\,d V(r)>0,\end{equation}
and by \eqref{sup-2-con},
\begin{equation}\label{sup-2-2}\int_1^\infty \, dt \int_{r\ge \varphi(t)} \frac{ 1}{V(r)\phi(r)}\,d V(r) =\infty.\end{equation}

For any $k\ge 1$, set $B_k=\{ d(X_{2^{k+1}},X_{2^k}) \ge
\varphi (2^{k+1}) \vee \phi^{-1}(2^{k+1})
\}.$
Then for every $x\in M \setminus \mathscr{N}$
 and $k\ge 1$, by the Markov property,
\begin{align*}
\Pp^x (B_k|\mathscr{F}_{2^{k}})\ge &\inf_{z}\Pp^z(d(X_{2^k}, z)\ge
\varphi (2^{k+1}) \vee \phi^{-1}(2^{k+1}))\\
\ge& C\int_{r\ge \varphi (2^{k+1}) \vee \phi^{-1}(2^{k+1})} \frac{ 2^k}{V(r)\phi(r)}\,d V(r).
\end{align*}

If there exist infinitely many $k\ge 1$ such that $\varphi(2^{k+1})\le \phi^{-1}(2^{k+1})$, then, by \eqref{sup-2-1}, for infinitely many $k\ge 1$,
\begin{align*}\Pp^x (B_k|\mathscr{F}_{2^{k}})\ge & C\int_{r\ge\phi^{-1}(2^{k+1})} \frac{ 2^k}{V(r)\phi(r)} \,d V(r)\\
\ge & \frac{C}2\inf_{t\ge 1} \int_{r\ge\phi^{-1}(t)} \frac{ t}{V(r)\phi(r)}\,d V(r)=:c_3>0\end{align*} and so
\begin{equation}\label{sup-2-3}\sum_{k=1}^\infty \Pp^x (B_k|\mathscr{F}_{2^{k}})=\infty.\end{equation} If there is $k_0\ge1$ such that for all $k\ge k_0$, $\varphi(2^{k+1})>\phi^{-1}(2^{k+1})$, then
$$\Pp^x (B_k|\mathscr{F}_{2^{k}})\ge C\int_{r\ge \varphi(2^{k+1})} \frac{2^k}{V(r)\phi(r)}\,d V(r)=
\frac{C}2\int_{r\ge \varphi(2^{k+1})} \frac{2^{k+1}}{V(r)\phi(r)}\,d V(r).$$ Combining this with \eqref{sup-2-2}, we also get \eqref{sup-2-3}.
Therefore, by the second Borel-Cantelli lemma, $\Pp^x(\limsup B_n)=1$. Whence, for infinitely many $k\ge 1$,
  $$d(X_{2^{k+1}},x)\ge \frac{1}{2}( \varphi(2^{k+1})  \vee \phi^{-1}(2^{k+1}))
 $$
  or $$d(X_{2^k}, x)\ge \frac{1}{2}( \varphi(2^{k+1}) \vee \phi^{-1}(2^{k+1}))\ge \frac{1}{2}( \varphi(2^k) \vee \phi^{-1}(2^k)).$$
 In particular,
 $$\limsup_{t\to \infty}\frac{d(X_t,x)}{\varphi(t) \vee \phi^{-1}\big({t}\big)}\ge \limsup_{k\to \infty} \frac{d(X_{2^{k}},x)}{\varphi(2^k)\vee \phi^{-1}(2^k)}\ge \frac{1}{2}.$$

By the inequality above, we immediately get that  for all
$x\in M \setminus \mathscr{N}$
$$ \limsup_{t\to \infty} \frac{\sup_{0<s\le t} d(X_s,x) }{\varphi\big({t}\big)
  }\ge
\limsup_{t\to \infty} \frac{ d(X_t,x) }{\varphi\big({t}\big)
  } \ge
\frac{1}{2},\quad
\Pp^x\mbox{-a.e. }
\omega.$$
Therefore, \eqref{e:l_bd_limsup1} follows by considering $k\varphi(r)$ for large enough $k>1$ instead of $\varphi(r)$ and using \eqref{doubphiw0}.
 \end{proof}

To consider the lower bound for limsup behavior of maximal process for $t\to 0$, we need the following two-sided
off-diagonal estimate for the heat kernel $p(t,x,y)$ on $(M,d,\mu)$, i.e.
for every $x \in M\setminus \mathscr{N}$, $\mu$-almost all $y\in M$
and each $t \in (0,b)$ with some constant $b>0$,
 \begin{equation}\label{two-sided} \frac{C_1t}{V(d(x,y))\phi(d(x,y))}
 \le p(t,x,y)
 \le\frac{ C_2 t}{V(d(x,y))\phi(d(x,y))},\quad d(x,y)\ge \phi^{-1}(t),\end{equation}
 where $V$ and $\phi$ are strictly increasing functions satisfying \eqref{eqn:univd*} and \eqref{doubphiw0}, respectively.

\begin{theorem}\label{sup-3}  Let $p(t,x,y)$ satisfy two-sided
off-diagonal estimate
 \eqref{two-sided} above.
 If
$\varphi$ is an increasing function  on $(0,1)$
satisfying  \begin{equation}\label{sup-3-con}\int_0^1  \frac{1}{\phi(\varphi(t))}\,dt=\infty,\end{equation}
then for all $x\in M \setminus \mathscr{N}$,
 \begin{align}
 \label{e:lb_limsup1}\limsup_{t\to0} \frac{\sup_{0<s\le t} d(X_s,x)}{\varphi\big(t\big)}=
 \limsup_{t\to0} \frac{ d(X_t,x)}{\varphi\big(t\big)   }=\infty,~~~\qquad\,
\Pp^x\mbox{-a.e. } \omega. \end{align}
\end{theorem}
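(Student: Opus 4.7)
The plan is to adapt the dyadic Borel--Cantelli scheme used in Theorem \ref{sup-2} to the small-time regime $t\to 0$. The new ingredient, and the main obstacle, is that for $t_k=2^{-k}\downarrow 0$ the natural filtration $\mathscr{F}_{t_k}$ is decreasing in $k$, so the conditional second Borel--Cantelli lemma exploited in the proof of Theorem \ref{sup-2} no longer applies. I would instead deduce a pairwise correlation bound from the Markov property and then combine the Kochen--Stone lemma with Blumenthal's $0$-$1$ law.

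The first step is to integrate the lower and upper bounds in \eqref{two-sided} over dyadic annuli, using the doubling of $V$ and $\phi$, to obtain the two-sided ball-complement estimate
$$c_1\frac{t}{\phi(R)}\;\le\;\Pp^y\bigl(d(X_t,y)\ge R\bigr)\;\le\; c_2\frac{t}{\phi(R)},\qquad R\ge \phi^{-1}(t),\ t\in(0,b),$$
uniformly in $y\in M\setminus\mathscr{N}$; this is essentially the computation carried out at the start of the proof of Theorem \ref{sup-2}, together with its upper counterpart. Setting $t_k=2^{-k}$ and $A_k:=\{d(X_{t_k},X_{t_{k+1}})\ge \varphi(t_k)\vee\phi^{-1}(t_{k+1})\}$, the Markov property applied at time $t_{k+1}$, combined with the estimate above, sandwiches the conditional probability: writing $\lambda_k:=t_{k+1}/\phi\bigl(\varphi(t_k)\vee\phi^{-1}(t_{k+1})\bigr)$,
$$c_1\lambda_k\;\le\;\Pp^x(A_k\mid\mathscr{F}_{t_{k+1}})\;\le\; c_2\lambda_k,$$
uniformly in the value of $X_{t_{k+1}}$ and for all $k$ large enough that $t_{k+1}<b$.

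A two-case analysis exactly parallel to the one in Theorem \ref{sup-2} gives $\sum_k\lambda_k=\infty$: either $\varphi(t_k)\le\phi^{-1}(t_{k+1})$ for infinitely many $k$ (so $\lambda_k=1$ along that subsequence), or else $\varphi(t_k)>\phi^{-1}(t_{k+1})$ for all large $k$, in which case $\sum_k t_{k+1}/\phi(\varphi(t_k))$ is comparable to $\int_0^1 \phi(\varphi(s))^{-1}\,ds=\infty$ by \eqref{sup-3-con}, monotonicity of $\phi\circ\varphi$ and doubling. For $j<k$ we have $A_k\in\mathscr{F}_{t_k}\subset\mathscr{F}_{t_{j+1}}$, so conditioning yields the key pairwise bound
$$\Pp^x(A_j\cap A_k)=\Ee^x\bigl[\mathbf{1}_{A_k}\Pp^x(A_j\mid\mathscr{F}_{t_{j+1}})\bigr]\le c_2\lambda_j\,\Pp^x(A_k)\le \frac{c_2}{c_1}\,\Pp^x(A_j)\Pp^x(A_k).$$
The Kochen--Stone lemma then gives $\Pp^x(\limsup_k A_k)\ge c_1/c_2>0$; since $A_k\in\mathscr{F}_{t_k}$, the event $\limsup_k A_k$ lies in $\bigcap_N\mathscr{F}_{t_N}=\mathscr{F}_{0+}$, and Blumenthal's $0$-$1$ law upgrades this to probability $1$.

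Finally, on $\limsup_k A_k$ the triangle inequality forces $\max\{d(X_{t_k},x),d(X_{t_{k+1}},x)\}\ge\tfrac12\varphi(t_k)\ge\tfrac12\varphi(t_{k+1})$ for infinitely many $k$, so $\limsup_{t\to 0}d(X_t,x)/\varphi(t)\ge \tfrac12$ $\Pp^x$-a.s. Replacing $\varphi$ by $M\varphi$ for each positive integer $M$ (the hypothesis \eqref{sup-3-con} is preserved by the doubling of $\phi$) and intersecting over $M$ yields $\limsup_{t\to 0}d(X_t,x)/\varphi(t)=\infty$ $\Pp^x$-a.s.; the corresponding statement for $\sup_{0<s\le t}d(X_s,x)$ then follows immediately from the pointwise bound $d(X_t,x)\le\sup_{0<s\le t}d(X_s,x)$.
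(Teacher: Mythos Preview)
Your proposal is correct and follows essentially the same route as the paper: dyadic times $t_k=2^{-k}$, events $A_k$ measuring the jump $d(X_{t_k},X_{t_{k+1}})$, two-sided tail bounds from \eqref{two-sided} integrated over annuli, a pairwise correlation inequality via the Markov property, and then a generalized Borel--Cantelli argument upgraded to probability one by Blumenthal's $0$--$1$ law. The only cosmetic differences are that the paper phrases the correlation bound via Lemma~\ref{bc} (the Petrov/Yan variant) rather than Kochen--Stone, and uses the threshold $\phi^{-1}(s_k)$ in place of your $\phi^{-1}(t_{k+1})$, which are comparable by doubling.
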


To prove Theorem \ref{sup-3},
we will adopt the following generalized Borel-Cantelli lemma.

\begin{lemma}\label{bc}$($\cite[Theorem 2.1]{P} or \cite[Theorem 1]{Y}$)$ Let $A_1$, $A_2$, $\ldots$ be a sequence of events satisfying conditions
$\sum_{n=1}^\infty \Pp(A_n)=\infty$ and
$\Pp(A_k\cap A_j)\le C\Pp(A_k)\Pp (A_j)$ for all $k,j>L$ such that $k\neq j$ and for some constants $C\ge1$ and $L$. Then,
$\Pp(\limsup A_n)\ge 1/C.$ \end{lemma}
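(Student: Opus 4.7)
The plan is to combine a second--moment (Paley--Zygmund) estimate with a tail-truncation argument. First I would introduce, for each $N>L$, the counting random variable $S_N:=\sum_{n=L+1}^N \indyk{A_n}$ and compute
\begin{align*}
\Ee S_N &= \sum_{n=L+1}^N \Pp(A_n),\\
\Ee S_N^2 &= \Ee S_N + \sum_{\substack{L<k,j\le N \\ k\ne j}} \Pp(A_k\cap A_j).
\end{align*}
The hypothesis $\Pp(A_k\cap A_j)\le C\Pp(A_k)\Pp(A_j)$ for distinct $k,j>L$, together with the trivial bound $\sum_{k\ne j}\Pp(A_k)\Pp(A_j)\le (\Ee S_N)^2$, will then give $\Ee S_N^2 \le \Ee S_N+C(\Ee S_N)^2$.

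Next, I would apply the Cauchy--Schwarz inequality to $\Ee S_N=\Ee(S_N\,\indyk{S_N\ge 1})$, obtaining $\Pp(S_N\ge 1)\ge (\Ee S_N)^2/\Ee S_N^2$. Combining this with the second-moment bound yields
$$
\Pp\Bigl(\bigcup_{n=L+1}^N A_n\Bigr)=\Pp(S_N\ge 1)\ge \frac{(\Ee S_N)^2}{\Ee S_N+C(\Ee S_N)^2}=\frac{1}{C+1/\Ee S_N}.
$$
The divergence $\sum_n\Pp(A_n)=\infty$ forces $\Ee S_N\to\infty$ as $N\to\infty$, so the right-hand side tends to $1/C$. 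Continuity of $\Pp$ from below then gives $\Pp\bigl(\bigcup_{n>L}A_n\bigr)\ge 1/C$.

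Finally, to upgrade the union bound to a $\limsup$ bound, I would observe that for any $M>L$ the hypotheses of the lemma still hold for the truncated sequence $(A_n)_{n>M}$: pairs $k,j>M$ are in particular pairs with $k,j>L$, and $\sum_{n>M}\Pp(A_n)=\infty$. The argument above applied to this truncated sequence gives $\Pp\bigl(\bigcup_{n\ge M}A_n\bigr)\ge 1/C$ for every $M>L$. Since $\bigcup_{n\ge M}A_n$ is decreasing in $M$ and $\limsup_n A_n=\bigcap_{M>L}\bigcup_{n\ge M}A_n$, continuity of $\Pp$ from above yields $\Pp(\limsup A_n)\ge 1/C$. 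The main delicate point is the Paley--Zygmund step: one has to ensure that the linear-in-$\Ee S_N$ correction term in the second-moment bound becomes negligible compared to the quadratic main term in the ratio $(\Ee S_N)^2/\Ee S_N^2$, which is exactly what the divergence of $\sum\Pp(A_n)$ supplies. No independence or further regularity is needed.
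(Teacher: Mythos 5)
Your argument is correct and complete: the Paley--Zygmund/Cauchy--Schwarz bound $\Pp(\bigcup_{n=L+1}^N A_n)\ge (\Ee S_N)^2/\Ee S_N^2\ge 1/(C+1/\Ee S_N)$, the passage to the limit using $\sum_n\Pp(A_n)=\infty$, and the truncation-plus-continuity-from-above step to convert the union bound into a bound on $\Pp(\limsup_n A_n)$ are all sound. The paper gives no proof of this lemma but cites Petrov and Yan, whose arguments are essentially this same second-moment (Chung--Erd\H{o}s type) computation, so your route matches the intended one.
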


\begin{proof}[Proof of Theorem $\ref{sup-3}$] For simplicity, we may and
will assume that $b=1$, $\phi(1)=1$ and $2^{-d_1}<c_1$, where $d_1$ and $c_1$ are constants given in \eqref{eqn:univd*}. Then, similar to the proof of Theorem \ref{sup-2}, under assumptions of the theorem, we have
\begin{equation}\label{sup-3-1}\inf_{t\in(0,1]} \int_{r\ge \phi^{-1}(t)} \frac{t}{V(r)\phi(r)}\,d V(r) >0,\end{equation}
and, by \eqref{sup-3-con},
 \begin{equation}\label{sup-3-2} \int_0^1 \, dt \int_{r\ge\varphi(t)}\frac{1}{V(r)\phi(r)}\,d V(r)=\infty.\end{equation}

 For some $t\in(0,1)$ and any $k\ge 1$, set $s_k=2^{-k}t$ and
$$A_k=\Big\{ d(X_{s_k},X_{s_{k+1}}) \ge \varphi (s_{k}) \vee \phi^{-1} (s_{k}) \Big\}.$$
 By the Markov property and the lower bound in \eqref{two-sided}, for
 all $x\in M \setminus \mathscr{N}$,
  \begin{align*}
\Pp^x (A_k)\ge &\inf_{z}\Pp^z(d(X_{s_{k+1}}, z)\ge \varphi(s_{k})   \vee \phi^{-1}(s_{k}))\nonumber\\
\ge&C_1\inf_{z}\int_{d(y,z)\ge\varphi(s_{k}) \vee \phi^{-1}(s_{k})}\frac{s_{k+1}}{V(d(z,y))\phi(d(z,y))}\mu(dy)\nonumber\\
\ge &c_2\int_{r\ge\varphi(s_{k}) \vee \phi^{-1}(s_{k})} \frac{s_{k}}{V(r)\phi(r)} \,dV(r)
=:c_2 c_{1,s_{k}}.\end{align*}
In particular, if $\varphi(\theta)\ge \phi^{-1}(\theta)$, then  \begin{align*}c_{1,\theta}
= &\int_{r\ge \varphi(\theta)}\frac{\theta}{V(r)\phi(r)}\,d V(r)
  ; \end{align*}  if $\varphi(\theta)\le \phi^{-1}(\theta)$, then  \begin{equation}\label{sup-3-3}\begin{split}c_{1,\theta}
   &= \int_{r\ge \phi^{-1}(\theta)}\frac{\theta }{V(r)\phi(r)}\,d V(r).\end{split}\end{equation}
Combining these two estimates above with \eqref{sup-3-1} and \eqref{sup-3-2} yields that
  $$\sum_{k=1}^\infty
  \Pp^x(A_k)=\infty.$$

On the other hand, for any $k<j$, by the Markov property and the upper bound for the heat kernel \eqref{two-sided},
 \begin{align*}\Pp^x (A_k\cap A_j)\le &\Ee^x\Big(\I_{A_j} \Pp^{X_{s_{k}}}\big(d(X_{s_{k+1}}, X_0)\ge \varphi(s_{k})\vee \phi^{-1} (s_{k})\big)\Big)\\
 \le &\Pp^x(A_j) \sup_{z}\Pp^z\big(d(X_{s_{k+1}}, z)\ge \varphi(s_{k})\vee \phi^{-1} (s_{k})\big)\\
 \le & c_3\Pp^x(A_j)c_{1,s_{k}}
 \le  c_3^2c_{1,s_{j}}c_{1,s_{k}}.\end{align*}
 From this and \eqref{sup-3-3}, we can easily see that there is a constant $C_0\ge 1$ such that
 $$ \Pp^x (A_k\cap A_j)\le C_0\Pp^x(A_k)\Pp^x(A_j).$$

 Therefore, according to Lemma \ref{bc}, $\Pp^x(\limsup A_n)\ge 1/C_0,$ which along with the Blumenthal 0-1 law implies that $\Pp^x(\limsup A_n)=1$. Whence, for infinitely many $k\ge 1$,
  $$d(X_{s_{k}},x)\ge \frac{1}{2} (\varphi(s_{k})
\vee  \phi^{-1}(s_{k}))
$$
  or
  $$d(X_{s_{k+1}}, x)\ge \frac{1}{2} (\varphi(s_{k})
  \vee  \phi^{-1}(s_{k}))
  \ge \frac{1}{2} \left(\varphi\left(s_{k+1}\right)\vee \phi^{-1}\left({s_{k+1}}\right)\right) .$$ In particular,
 $$\limsup_{t\to0}\frac{d(X_t,x)}{\varphi(t)   \vee \phi^{-1}(t) }\ge \limsup_{k\to \infty} \frac{d(X_{s_{k}},x)}{\varphi(s_{k})
  \vee  \phi^{-1}(s_{k})
 }\ge \frac{1}{2}.$$
Hence, \eqref{e:lb_limsup1} follows by considering $k\varphi(r)$ for large $k>1$ instead of $\varphi(r)$ and using  \eqref{doubphiw0}.
\end{proof}

\begin{remark}\label{rem26}
 The proof of Theorem \ref{sup-2} is
 based only
  on off-diagonal lower bound of the heat kernel estimate for long time, while in the proof of Theorem \ref{sup-3} explicit two-sided off-diagonal estimate of the heat kernel for small time is used.
Unlike the case of Theorem \ref{sup-2}, we do not know how to prove Theorem \ref{sup-3}
by using only the off-diagonal lower bound of the heat kernel estimate.
  \end{remark}

\subsection{
 Liminf laws of the iterated logarithm}

In this part, we discuss
Chung-type liminf laws of the iterated logarithm. To this end, we assume that  the heat kernel $p(t,x,y)$ on $(M,d,\mu)$ satisfies the following two-sided estimates with $T\in(0,\infty]$:
for every $x \in M\setminus \mathscr{N}$, $\mu$-almost all $y\in M$
and each $0<t<T$, \begin{equation}\label{two-sided-1}\begin{split} & C_1\bigg( \frac{1}{V(\phi^{-1}(t))} \wedge \frac{t}{V(d(x,y))\phi(d(x,y))}  \bigg)\le p(t,x,y),\\
  &p(t,x,y)\le C_2\bigg( \frac{1}{V(\phi^{-1}(t))} \wedge \frac{t}{V(d(x,y))\phi(d(x,y))}  \bigg),\end{split}\end{equation} where  $V$ and $\phi$ are strictly increasing functions satisfying \eqref{eqn:univd*} and \eqref{assum-2}
  respectively.

\begin{theorem}\label{inf1} Assume that the process $X$ is conservative.
Let $p(t,x,y)$ satisfy two-sided
estimate
 \eqref{two-sided-1} above with $0<T<\infty$.
Then there exists a constant $c\in (0,\infty)$ such that
$$\liminf_{t\to0} \frac{\sup_{0<s\le t} d(X_s,x)}{\phi^{-1}(t/\log|\log t|)}=c,~~~\quad\,
\Pp^x\mbox{-a.e. } \omega,~~\forall x\in M.$$ \end{theorem}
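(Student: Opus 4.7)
The plan is to combine the sharp two-sided exit probability bounds of Proposition \ref{lemma-inf} with two Borel--Cantelli arguments, in the same spirit as the proofs of Theorems \ref{sup-2} and \ref{sup-3}. Under the hypotheses of the theorem ($X$ conservative, Assumption \ref{assmp1} with $T<\infty$, and Assumptions \ref{assmp11}, \ref{assmp2}), Proposition \ref{lemma-inf} together with the doubling \eqref{assum-2} of $\phi$ yields the two-sided sub-exponential bounds
\begin{equation*}
\exp(-c_1 t/\phi(r)) \;\le\; \Pp^x\!\Bigl(\sup_{0\le s\le t}d(X_s,x)\le r\Bigr) \;\le\; \exp(-c_2 t/\phi(r))
\end{equation*}
valid for all $x\in M$ and all $r,t$ with $\phi(r)\le t\le T/2$. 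Writing $M_t := \sup_{0\le s\le t}d(X_s,x)$ and $\psi(t):=\phi^{-1}(t/\log|\log t|)$, so that $t/\phi(\psi(t))=\log|\log t|$, the doubling \eqref{assum-2} shows that $\Pp^x(M_t\le\lambda\psi(t))$ is comparable to $|\log t|^{-\eta(\lambda)}$, with $\eta(\lambda)\to\infty$ as $\lambda\to 0$ and $\eta(\lambda)\to 0$ as $\lambda\to\infty$.

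Since $L := \liminf_{t\to 0}M_t/\psi(t)$ is $\mathcal{F}_{0+}$-measurable, Blumenthal's $0$--$1$ law makes $L$ $\Pp^x$-a.s.\ equal to a deterministic constant, and the uniformity of the bounds above in $x$ shows that this constant is independent of the base point. Thus it suffices to prove $0<L<\infty$. For the lower bound I would take $t_n=\theta^n$ with $\theta\in(0,1)$ fixed and $\lambda>0$ so small that $\eta(\lambda)>1$; the upper exit bound then gives $\sum_n\Pp^x(M_{t_n}\le\lambda\psi(t_n))\lesssim\sum_n n^{-\eta(\lambda)}<\infty$, the first Borel--Cantelli lemma forces $M_{t_n}>\lambda\psi(t_n)$ for all large $n$, and the monotonicity of $M_t$ combined with the bound $\psi(t)/\psi(t_{n+1})\le C(\theta)$ on $t\in[t_{n+1},t_n]$ (a consequence of \eqref{assum-2} applied to $\phi^{-1}$) interpolates to $L\ge\lambda/C(\theta)>0$.

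The upper bound $L<\infty$ is the main obstacle, because the events $A_n:=\{M_{t_n}\le K\psi(t_n)\}$ are very far from independent. Here I would follow the Kochen--Stone strategy used in the proof of Theorem \ref{sup-3} (via Lemma \ref{bc}): choose $K$ so large that $\eta'(K)<1$, so that the lower exit bound yields $\Pp^x(A_n)\gtrsim n^{-\eta'(K)}$ with divergent sum, and then estimate the pairwise intersection for $k<j$ by writing
\begin{equation*}
A_k\cap A_j \;=\; A_j \cap \bigl\{\sup_{t_j\le s\le t_k} d(X_s,x)\le K\psi(t_k)\bigr\}
\end{equation*}
and applying the strong Markov property at the earlier time $t_j$. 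The triangle inequality $d(X_s,X_{t_j})\le d(X_s,x)+d(X_{t_j},x)$, combined with the bound $d(X_{t_j},x)\le K\psi(t_j)$ on $A_j$ and with the upper exit bound, produces a conditional factor of the form $\exp(-c_2(t_k-t_j)/\phi(2K\psi(t_k)))$, which is to be compared with $\Pp^x(A_k)\gtrsim\exp(-c_1 t_k/\phi(K\psi(t_k)))$ via \eqref{assum-2}. The delicate point is to balance the gap between the exit constants $c_1,c_2$ and the doubling constant of $\phi$; this forces the use of a sparse sequence such as $t_n=\exp(-2^n)$, which makes $\psi(t_j)/\psi(t_k)\to 0$ for $j>k$ and renders the triangle-inequality inflation from $K\psi(t_k)$ to $2K\psi(t_k)$ absorbable after a suitable adjustment of $K$. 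Lemma \ref{bc} then gives $\Pp^x(\limsup A_n)\ge 1/C>0$, which the $0$--$1$ law upgrades to probability one, so $L\le K<\infty$ and the proof is complete.
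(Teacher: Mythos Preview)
Your lower bound $L>0$ is fine and matches the easy half of the paper's argument (the paper phrases it dually as $\limsup_{r\to0}\tau_{B(x,r)}/[\phi(r)\log|\log\phi(r)|]\le b$, but the first Borel--Cantelli step is the same).

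The real gap is in your proof that $L<\infty$. The Kochen--Stone verification cannot be carried out. After conditioning at time $t_j$ you get
\[
\Pp^x(A_k\cap A_j)\;\le\;\Pp^x(A_j)\,\exp\!\bigl(-c_2(t_k-t_j)/\phi(2K\psi(t_k))\bigr),
\]
and the required bound $\Pp^x(A_k\cap A_j)\le C\,\Pp^x(A_j)\Pp^x(A_k)$ amounts, after taking logarithms and letting $t_j/t_k\to0$, to
\[
\frac{c_2}{\phi(2K\psi(t_k))}\;\ge\;\frac{c_1}{\phi(K\psi(t_k))}-\frac{\log C}{t_k},
\qquad\text{i.e.}\quad \frac{c_2}{c_1}\;\gtrsim\;\frac{\phi(2K\psi)}{\phi(K\psi)}.
\]
But the lower bound in \eqref{assum-2} forces $\phi(2r)/\phi(r)\ge c_3\,2^{d_3}>1$, whereas $c_2\le c_1$ always; so this fails no matter how $K$ is chosen (both sides scale identically in $K$). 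Your proposed remedy of passing to $t_n=\exp(-2^n)$ makes things worse: then $|\log t_n|=2^n$ and the upper exit bound gives $\Pp^x(A_n)\le 2^{-n\eta}$ for some $\eta>0$, so $\sum_n\Pp^x(A_n)<\infty$ and the divergence hypothesis of Lemma~\ref{bc} is destroyed. The analogy with Theorem~\ref{sup-3} is misleading: there the events concern increments $d(X_{s_k},X_{s_{k+1}})$ over \emph{disjoint} time intervals, so the Markov property delivers genuine factorisation with matching upper and lower constants; your events $\{M_{t_n}\le K\psi(t_n)\}$ are nested in time, and the mismatch $c_1\ne c_2$ from Proposition~\ref{lemma-inf} is fatal.

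The paper's device for $L<\infty$ is precisely to manufacture disjoint time intervals. Working with the dual quantity $\tau_{B(x,r)}$, it sets $a_k=\phi^{-1}(e^{-k^2})$, $u_k\asymp e^{-k^2}\log k$, $\sigma_k=\sum_{i>k}u_i$, and bounds $\Pp^x\bigl(\bigcap_{k=m}^{2m}D_k\bigr)$ where $D_k=\{\sup_{0<s\le\sigma_{k-1}}d(X_s,x)>2a_k\}$. The crucial step is $D_k\subset G_k\cup H_k$ with
\[
G_k=\Bigl\{\sup_{\sigma_k\le s\le\sigma_{k-1}}d(X_s,X_{\sigma_k})>a_k\Bigr\},
\qquad
H_k=\Bigl\{\sup_{0<s\le\sigma_k}d(X_s,x)>a_k\Bigr\}.
\]
The $G_k$'s live on the disjoint intervals $(\sigma_k,\sigma_{k-1}]$, so the Markov property gives the exact product bound $\Pp^x(\bigcap G_k)\le\prod_k\bigl(1-(1+k)^{-2/3}\bigr)$, while each $H_k$ is negligible because $\sigma_k\ll\phi(a_k)$. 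This sidesteps the $c_1/c_2$ mismatch entirely and yields $\Pp^x(\bigcap_{k=m}^{2m}D_k)\le c\,e^{-m^{1/4}}$, which is summable.
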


\begin{proof}
The following proof is based on the idea of proofs in \cite[Chapter 3]{Du} (see also the proof of \cite[Theorem 2]{KS}). Without loss of generality, we can assume that $T=1$, and $\mathscr{N}= \emptyset$ due to Proposition \ref{p:Holder_estimates}.

Let $(a_k)_{k\ge 1}$ be the sequence defined by $a_k=
\phi^{-1}(e^{-k^2})$ so that $\phi(a_k)= e^{-k^2}$.  For any
$k\ge1$, set $\lambda_k=\frac{2}{3|\log a^*_1|}\log(1+k)$,
$u_k=c_0\lambda_ke^{-k^2}$ and $\sigma_k=\sum_{i=k+1}^\infty u_i$,
where
 $c_0>0$ and $a^*_1 \in (0,1)$ are the constants in Proposition \ref{lemma-inf}.
We will prove that there are $\xi, c_1\in(0,\infty)$ such that for all $x\in M$
$$\Pp^x\left(\sup_{2a_{2m}\le r\le 2a_m} \frac{\tau_{B(x,r)}}{\phi(r)\log|\log \phi(r)|}\le \xi\right)\le c_1\exp(-m^{1/4}),\quad m\ge 1.$$

For $k\ge 1$, let $G_k=\big\{\sup_{\sigma_k\le s\le \sigma_{k-1}} d(X_s,X_{\sigma_k})>a_k\big\}$. By the Markov property, the conservativeness of the process $X$
and Proposition \ref{lemma-inf},
 for all $x\in M$,
\begin{align*} \Pp^x (G_k)&\le \sup_z\Pp^z\big(\sup_{0\le s\le u_k} d(X_s,z)> a_k\big)\\
&=1-\inf_z\Pp^z\big(\sup_{0\le s\le u_k} d(X_s,z)\le a_k\big)\\
&=1-{a^*_1}^{\lambda_k}= 1-(1+k)^{-2/3}\le \exp(-c_2k^{-2/3}).\end{align*}

For $k\ge 1$, let $H_k=\big\{  \sup_{0<s\le \sigma_k} d(X_s,x)>a_k\big\}.$ Then, for
all $x\in M$
and for all $k\ge 1$,
\begin{align*} \Pp^x (H_k)&\le  \frac{c_3\sigma_k}{\phi(a_k)}\le \frac{c_4 \sum_{i=1}^\infty e^{-(k+i)^2} \log (1+k+i) }{e^{-k^2}}\le c_5 e^{-k },\end{align*} where the first inequality follows from \eqref{exit} and the doubling property of $\phi$.

For $m\ge 1$, define $A_m=\bigcap_{k=m}^{2m} D_k$, where $D_k=\big\{\sup_{0<s\le \sigma_{k-1}} d(X_s,x)>2a_k\big\}.$ Since $D_k\subset G_k\cup H_k$,
$A_m\subset \left(\cap_{k=m}^{2m} G_k\right)\cup \left(\cup_{k=m}^{2m} H_k\right).$
By using the Markov property again, we find that for
all $x\in M$,
\begin{align*} \Pp^x (A_m)&\le \Pp^x(\cap_{k=m}^{2m} G_k)+\Pp^x (\cup_{k=m}^{2m} H_k)\\
&\le \prod_{k=m}^{2m} \exp(-
c_2 k^{-2/3})+ c_5 \sum_{k=m}^{2m}e^{-k}\le
c_6\exp(-m^{1/4}).\end{align*}
Therefore, \begin{align*}
c_6\exp(-m^{1/4})\ge &\Pp^x\Big(\bigcap_{k=m}^{2m} \Big\{ \frac{\sup_{0<s\le \sigma_{k-1}} d(X_s,x)}{2a_k}>1 \Big\}\Big)\\
=&\Pp^x\Big(\inf_{m\le k\le 2m} \frac{\sup_{0\le s\le \sigma_{k-1}} d(X_s,x)}{2a_k}>1\Big)\\
=&\Pp^x\Big(\sup_{m\le k\le 2m} \frac{\tau_{B(x,2a_k)}}{\sigma_{k-1}}<1\Big)
\ge\Pp^x(\sup_{m\le k\le 2m} \frac{\tau_{B(x,2a_k)}}{u_k}<1)\\
\ge& \Pp^x\Big(\sup_{2a_{2m}\le r\le 2a_m}\frac{\tau_{B(x,r)}}{ \phi(r)\log|\log \phi(r)|} \le \xi\Big)\end{align*} for some $\xi\in(0,\infty)$.
Using this equality, by the Borel-Cantelli lemma, we  conclude that
$$\limsup_{r\to0} \frac{\tau_{B(x,r)}}{ \phi(r)\log|\log \phi(r)|}\ge \xi.$$

On the other hand, with  $l_k:=\phi^{-1}(e^{-k})$ for $k\ge1$, we
have
\begin{align*} B_k:=\Big\{ \sup_{l_{k+1}\le r\le l_k} \frac{\tau_{B(x,r)}}{\phi(r) \log|\log \phi(r)|}\ge b\Big\}
\subset \Big\{{\tau_{B(x,l_k)}}\ge b e^{-1} {\phi(l_{k}) \log|\log \phi(l_k)|}\Big\}. \end{align*}
Taking $b=-4/\log a^*_2$
where $a^*_2 \in (0,1)$ is the constant in Proposition \ref{lemma-inf}, we know from
Proposition \ref{lemma-inf}
that  $\Pp^x(B_k)\le k^{-4/e}.$ Thus, by the Borel-Cantelli lemma again,
$$\limsup_{r\to0} \frac{\tau_{B(x,r)}}{\phi(r) \log|\log \phi(r)|}\in[\xi,b],$$ which implies that
$$\limsup_{r\to0} \frac{\tau_{B(x,r)}}{\phi(r) \log|\log \phi(r)|}=C,~~~\quad\,
\Pp^x\mbox{-a.e. } \omega,~~\forall x\in M,$$ for some constant $C>0$, also thanks to the Blumenthal 0-1 law.
The desired assertion follows from the equality above.
\end{proof}

For the behavior of liminf for maximal process with $t\to \infty$, we have the following conclusion similar to Theorem \ref{inf1}.
\begin{theorem}\label{inf2} Let $p(t,x,y)$ satisfy two-sided
estimate
 \eqref{two-sided-1}
 for all $t>0$, i.e. $T=\infty$.
Then there exists a constant $c\in (0,\infty)$ such that
$$\liminf_{t\to \infty} \frac{\sup_{0<s\le t} d(X_s,x)}{\phi^{-1}(t/\log\log t)}=c,~~~\qquad\,
\Pp^x\mbox{-a.e. } \omega,~~\forall x\in M.$$\end{theorem}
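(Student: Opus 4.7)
The plan is to adapt the argument of Theorem \ref{inf1} to the large-time regime, exploiting the duality
\[
\sup_{0<s\le t} d(X_s,x) \le r \iff \tau_{B(x,r)} > t,
\]
which reformulates the claim as
\[
\limsup_{r\to\infty} \frac{\tau_{B(x,r)}}{\phi(r)\log\log \phi(r)} = C, \qquad \Pp^x\text{-a.e.},
\]
for some $C\in(0,\infty)$. Since Assumption \ref{assmp1} now holds with $T=\infty$, Proposition \ref{lemma-inf} is available at every scale $r>0$ and every $n\ge 1$, which is exactly what is needed to run the two-sided comparison all the way out to $r=\infty$. The zero--one law for tail events (Theorem \ref{t:01tail}) will provide the final upgrade from a probabilistic interval to a deterministic constant.

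For the lower bound on the $\limsup$, I would repeat the construction of Theorem \ref{inf1} with the scales running in the opposite direction: take $a_k=\phi^{-1}(e^{k^2})\to\infty$, $\lambda_k=\frac{2}{3|\log a^*_1|}\log(1+k)$, $u_k=c_0\lambda_k e^{k^2}$ and the partial sums $\sigma_k=\sum_{i=1}^{k} u_i$ (so that $\sigma_k\asymp u_k\to\infty$). With
\[
G_k=\Bigl\{\sup_{\sigma_{k-1}\le s\le \sigma_k} d(X_s,X_{\sigma_{k-1}})>a_k\Bigr\}, \quad H_k=\Bigl\{\sup_{0<s\le \sigma_{k-1}} d(X_s,x)>a_k\Bigr\},
\]
the strong Markov property combined with the upper half of Proposition \ref{lemma-inf} yields $\Pp^x(G_k)\le \exp(-c k^{-2/3})$, while \eqref{exit} applied at these large scales (using that $\sigma_{k-1}/\phi(a_k)\to 0$ geometrically by the doubling of $\phi$) gives $\Pp^x(H_k)\le c' e^{-k}$. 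Intersecting over $k\in[m,2m]$ and summing yields a bound of order $\exp(-m^{1/4})$, so Borel--Cantelli forces the $\limsup$ to be at least some $\xi>0$.

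For the matching upper bound I would set $l_k=\phi^{-1}(e^k)\to\infty$ and apply the lower half of Proposition \ref{lemma-inf} to the events
\[
B_k=\Bigl\{\sup_{l_k\le r\le l_{k+1}}\frac{\tau_{B(x,r)}}{\phi(r)\log\log\phi(r)}\ge b\Bigr\}
\]
with $b=-4/\log a^*_2$, obtaining $\Pp^x(B_k)\le k^{-4/e}$; a second Borel--Cantelli pins the $\limsup$ into the deterministic interval $[\xi,b]$ almost surely.

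The one genuinely new ingredient is the $0$--$1$ step. In Theorem \ref{inf1} the $\limsup$ sits in the germ $\sigma$-field at $0$, so Blumenthal's law immediately produces a constant; here the relevant event lies in the tail $\sigma$-field, where Blumenthal's law is unavailable. The plan is to invoke Theorem \ref{t:01tail} (whose proof in turn rests on the H\"older continuity of $p(t,x,y)$ supplied by Proposition \ref{p:Holder_estimates}) to conclude that the almost-sure value of the $\limsup$ is independent of the starting point $x$ and thus equal to a single constant $C\in[\xi,b]$. This substitution of tail-event $0$--$1$ law for Blumenthal's law is the main obstacle and the only substantive departure from the small-time argument; once it is in place, inverting the equivalence between $\tau_{B(x,r)}$ and $\sup_{0<s\le t}d(X_s,x)$ delivers the claim.
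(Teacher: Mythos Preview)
Your proposal is correct and follows essentially the same route as the paper: reverse the scales in the proof of Theorem~\ref{inf1} (taking $a_k=\phi^{-1}(e^{k^2})$ and $\sigma_k$ a finite sum running up to $k$), use Proposition~\ref{lemma-inf} at all scales (available because $T=\infty$), and replace the Blumenthal $0$--$1$ law by the tail-event $0$--$1$ law of Theorem~\ref{t:01tail}. The only point you might add is that conservativeness, which in Theorem~\ref{inf1} was an explicit hypothesis, here follows automatically from Proposition~\ref{con} since the two-sided estimate \eqref{two-sided-1} holds with $T=\infty$.
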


\begin{proof}
Since the proof is the same as that of Theorem \ref{inf1} with some modifications, we just highlight a few differences.
Note that, by Proposition \ref{con}, the process $X$ is conservative. With the notions in the argument above, we
define the sequences $a_k$, $\sigma_k$ and sets $G_k$, $D_k$
as
$\phi(a_k)= e^{k^2}$, $\sigma_k=\sum_{i=1}^{k-1} u_i$ and
$$G_k=\big\{\sup_{\sigma_k\le s\le \sigma_{k+1}} d(X_s,X_{\sigma_k})>a_k\big\},\quad D_k=\big\{\sup_{0<s\le \sigma_{k+1}} d(X_s,x)>2a_k\big\},$$ respectively.
To conclude the proof, we
use
Theorem \ref{t:01tail} instead of
Blumenthal 0-1 law.
\end{proof}

\begin{remark}
It can be easily observed that the behavior of $\limsup$ does not change if
we consider $\sup_{0<s\le t} d(X_s,x)$ instead of $d(X_t,x)$.
However, the $\liminf$ behavior for $d(X_t,x)$ can be different from that of $\sup_{0<s\le t} d(X_s,x)$. For instance, if the process $X$ is recurrent, i.e.\ $\int_1^\infty \frac{1}{V(\phi^{-1}(t))}\,dt=\infty$, then for
all $x\in M  \setminus \mathscr{N}$, $\liminf_{t\to\infty}d(X_t,x)=0$.
\end{remark}

\section{Laws of the Iterated Logarithm for Local Times}\label{Sect4}
In this section, we discuss the LILs for local time. We assume Assumptions \ref{assmp1}, \ref{assmp11} and \ref{assmp2} throughout the section.
 Recall that, under Assumptions \ref{assmp1}, \ref{assmp11} and \ref{assmp2},   \eqref{eqn:univd*} holds for $V$ by Proposition \ref{space}, and \eqref{assum-2} is satisfied for $\phi$ by the remark below Assumption \ref{assmp2}.
Note that \eqref{eqn:univd*}  and \eqref{assum-2} are equivalent to the existence of constants
$ c_5,\cdots, c_8>1$ and $L_0>1$
such that for every $r>0$,
$$   c_5\phi (r) \leq \phi (L_0r)
\leq c_6 \, \phi (r)\quad \hbox{and}
\quad  c_7 V (r) \leq V (L_0r) \leq c_8 \,V (r).
$$
In particular,
  \begin{equation}\label{assum-4}\int_r^\infty \frac{dV(s)}{V(s)\phi(s)}\asymp \frac{1}{\phi(r)},\quad r>0.\end{equation}

\subsection{Estimates for resolvent densities}
For $\lambda>0$, we define the $\lambda$-resolvent density (i.e. the density function of the $\lambda$-resolvent operator) by
\begin{equation*}\label{e:resol}
u^\lambda(x,y)=\int_0^\infty e^{-\lambda t} p(t,x,y)\,dt.
\end{equation*}

For each $A\subset M$, set
\[
\tau_A:=\inf\{t> 0: X_t\notin A\},\quad \sigma_A:=\inf\{t> 0: X_t\in A\}
\] and $$\sigma^0_A:=\inf\{t\ge 0: X_t\in A\}.$$
For simplicity, we write $\sigma^0_x:=\sigma^0_{\{x\}}$.

For an
open subset $A\subset M$ with $A\ne M$,  define
\[
u_A(x,y)=\int_0^\infty p^A(t,x,y)\,dt,\qquad~~\, x,y\in
A,
\]
where $p^A(t,\cdot,\cdot)$ is
the
Dirichlet heat kernel of the process $X$ killed on exiting $A$, see \eqref{e:hkos1}.

\begin{proposition}\label{geenestdf}Suppose that \begin{equation}\label{assum-3}\int_0^\infty e^{-\lambda t} \frac{1}{V(\phi^{-1}(t))}\,dt \asymp \frac{\lambda^{-1}}{ V(\phi^{-1}(\lambda^{-1}))},\quad \lambda>0.\end{equation}
Then the following three statements hold.
\begin{itemize}
\item[(i)] There exist $c_1,c_2>0$ such that
\[
c_1 \frac{\phi(r)}{V(r)}\le u_{B(x,r)}(x,x)\le c_2  \frac{\phi(r)}{V(r)}~~~\,\qquad~~\mbox{ for all }~~x\in M,~~r>0.
\]
\item[(ii)] There exists $c_3>0$ such that for any $x_0\in M$, $R>0$ and
any $x,y\in B(x_0, R/4)$,
\[
\Pp^x(\sigma^0_y>\tau_{B(x_0,R)})\le c_3 \frac{\phi(d(x,y))}{V(d(x,y))}  \frac{1}{u_{B(x_0,R)}(y,y)}.
\]
\item[(iii)] It holds that $$1-\Ee^y[e^{- \sigma^0_x}]\le c_4\frac{\phi(d(x,y))}{V(d(x,y))}$$
for all $x,y\in M$.
 \end{itemize}
\end{proposition}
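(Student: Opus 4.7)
The plan is to reduce each of the three statements to a Green-function (resolvent) estimate via the strong Markov property, and then to bound the relevant difference of Green functions by splitting the time integral at $t=\phi(d(x,y))$ (or $t=\phi(r)$), using the on-diagonal heat-kernel upper bound from \eqref{a-two-sidedup} for small $t$ and combining the H\"older estimate of Proposition~\ref{p:Holder_estimates} with an exponential tail for large $t$.

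For (i), I would write $u_{B(x,r)}(x,x)=\int_0^{\phi(r)}p^{B(x,r)}(t,x,x)\,dt+\int_{\phi(r)}^\infty p^{B(x,r)}(t,x,x)\,dt$. On the first piece, $p^{B(x,r)}\le p$ combined with \eqref{a-two-sidedup} gives an integrand $\le C/V(\phi^{-1}(t))$, and a change of variables $s=\phi^{-1}(t)$ (equivalently, an application of \eqref{assum-3} with $\lambda=1/\phi(r)$) turns this into $\lesssim\phi(r)/V(r)$. On the second piece, Proposition~\ref{lemma-inf} yields $\Pp^x(\tau_{B(x,r)}>t)\le c\,e^{-ct/\phi(r)}$, which combined with the semigroup identity $p^{B(x,r)}(2t,x,x)=\int p^{B(x,r)}(t,x,z)^2\,\mu(dz)\le \|p^{B(x,r)}(t,x,\cdot)\|_\infty\cdot\Pp^x(\tau_{B(x,r)}>t)$ and the on-diagonal bound produces an exponentially-decaying integrand that integrates to $\lesssim\phi(r)/V(r)$. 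For the matching lower bound, $t\mapsto p^{B(x,r)}(t,x,x)$ is non-increasing by self-adjointness, so Proposition~\ref{w-ndle} forces $p^{B(x,r)}(t,x,x)\ge c_0/V(r)$ on $(0,\delta_0\phi(r)]$, and integrating over that interval yields $u_{B(x,r)}(x,x)\gtrsim\phi(r)/V(r)$.

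For (iii), decomposing $\int_0^\infty e^{-t}\,dL^x_t$ at $\sigma^0_x$ and applying the strong Markov property gives the standard identity $u^1(x,y)=\Ee^y[e^{-\sigma^0_x}]\,u^1(x,x)$, so that
\[
1-\Ee^y[e^{-\sigma^0_x}]=\frac{u^1(x,x)-u^1(x,y)}{u^1(x,x)}.
\]
The denominator is bounded below by $c/V(\phi^{-1}(1))$ using \eqref{a-two-sidedlow} and \eqref{assum-3}, so it remains to show that the numerator is $\lesssim \phi(d(x,y))/V(d(x,y))$. Writing the numerator as $\int_0^\infty e^{-t}(p(t,x,x)-p(t,x,y))\,dt$ and splitting at $t_0=\phi(d(x,y))$, the small-$t$ piece is bounded by $\int_0^{t_0}\frac{C\,dt}{V(\phi^{-1}(t))}\lesssim\phi(d(x,y))/V(d(x,y))$ exactly as in (i), while the large-$t$ piece is controlled by combining \eqref{e:Holder2} with the exponential cutoff $e^{-t}$ and the scalings \eqref{eqn:univd*}, \eqref{assum-2} to produce a bound of the same form. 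Statement (ii) proceeds along identical lines: strong Markov at $\sigma^0_y\wedge\tau_{B(x_0,R)}$, combined with the fact that $u_{B(x_0,R)}(\cdot,y)$ vanishes off $B(x_0,R)$, yields
\[
\Pp^x(\sigma^0_y>\tau_{B(x_0,R)})=\frac{u_{B(x_0,R)}(y,y)-u_{B(x_0,R)}(x,y)}{u_{B(x_0,R)}(y,y)},
\]
and the numerator is estimated by the same split at $t=\phi(d(x,y))$---this time with the role of the exponential $e^{-t}$ played by the exponential tail of $\tau_{B(x_0,R)}$, which Proposition~\ref{lemma-inf} supplies since $x,y\in B(x_0,R/4)$ ensures that a ball of comparable radius about $y$ lies inside $B(x_0,R)$.

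The main technical obstacle is the large-$t$ estimate in (ii) and (iii). The H\"older bound \eqref{e:Holder2} is stated for the free heat kernel $p$, so for (ii) one must either derive a comparable regularity for the Dirichlet heat kernel $p^{B(x_0,R)}$ or else treat the boundary-correction term $\Ee^{\cdot}[p(t-\tau_{B(x_0,R)},X_{\tau_{B(x_0,R)}},y);\tau_{B(x_0,R)}<t]$ directly, and in either route one needs to verify that the resulting contribution, after integration against the exponential-type weight, is still controlled by $\phi(d(x,y))/V(d(x,y))$. Once this is in place, the scalings \eqref{eqn:univd*} and \eqref{assum-2} line up the relevant powers cleanly, and the three estimates follow with universal constants.
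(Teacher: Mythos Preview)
Your approach to (i) is correct and a legitimate alternative to the paper's argument (which compares $u_A$ to $u^\lambda$ via an independent exponential time, rather than splitting the time integral directly). Both routes use the same ingredients and arrive at the same two-sided bound.

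The real problem lies in (ii) and (iii). Your plan hinges on the H\"older estimate \eqref{e:Holder2} to control the large-$t$ piece of $u^1(x,x)-u^1(x,y)$ (or its Dirichlet analogue), but the exponent $\theta$ in Proposition~\ref{p:Holder_estimates} is a \emph{generic} H\"older exponent coming from oscillation--reduction, and there is no reason for it to be at least as large as the exponent you need. Concretely, in the $\alpha$-set case with $\phi(r)=r^\beta$, $V(r)=r^\alpha$, your large-$t$ integral
\[
\int_{\phi(r)}^{1} \frac{1}{V(\phi^{-1}(t))}\Big(\frac{r}{\phi^{-1}(t)}\Big)^{\theta}\,dt
\;=\; r^{\theta}\int_{r^{\beta}}^{1} t^{-(\alpha+\theta)/\beta}\,dt
\]
is of order $r^{\theta}$ whenever $\theta<\beta-\alpha$, which is strictly worse than the target $\phi(r)/V(r)\asymp r^{\beta-\alpha}$. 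The paper flags exactly this obstruction in the Remark following the statement: ``We may be able to obtain the H\"older continuity by using \ldots\ Proposition~\ref{thm:PHI_hw}, but we cannot get the sharp exponent with that approach (cf.\ Proposition~\ref{p:Holder_estimates}).'' The sharp exponent is essential later (e.g.\ in the proof of \eqref{eq:ltex3} and Proposition~\ref{thm:estlochold}), so a sub-optimal H\"older bound does not suffice.

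The paper therefore takes a different route. For (ii) it does not estimate $u_{B(x_0,R)}(y,y)-u_{B(x_0,R)}(x,y)$ directly; instead it uses the strong Markov property at the exit time of a small ball $B=B(y,c_*d(x,y))$ to write $u_B(y,y)=u_{B(x_0,R)}(y,y)\,\Ee^y[f_y(X_{\tau_B})]$ with $f_y(\cdot)=\Pp^{\cdot}(\sigma^0_y>\tau_{B(x_0,R)})$, and then invokes the \emph{elliptic Harnack inequality} on the harmonic function $f_y$ together with a L\'evy-system bound on the probability of a large exit jump (this is where the jump structure enters and why $c_*$ must be chosen small). Combined with (i), this yields the sharp factor $\phi(d(x,y))/V(d(x,y))$. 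For (iii) the paper does not bound $u^1(x,x)-u^1(x,y)$ at all; it decomposes $\{\sigma_x^0\ge \mathrm{Exp}_1\}$ dyadically along exit times $\tau_m=\tau_{B(y,e^{-m})}$ and applies (i), (ii), and \eqref{eq:rlamtau} scale by scale, which produces the sharp bound without ever appealing to heat-kernel H\"older continuity.
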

\begin{remark} The exponent on the right hand side of (iii) (which is $\beta-\alpha$ when $d_1=d_2=\alpha$ and $d_3=d_4=\beta$
in \eqref{eqn:univd*}
and \eqref{assum-2}) is sharp in general, and we do need this exponent later. We may be able to obtain the H\"older continuity by using the Harnack inequality in Proposition
\ref{thm:PHI_hw}, but we cannot get the sharp exponent with that approach (cf.\ Proposition \ref{p:Holder_estimates}). Another possible approach is to use the properties of the so-called resistance form (see for example, \cite{Kig12}), but they require various preparations, so we take this \lq\lq
bare-hands\rq\rq\, approach.

\end{remark}
\begin{proof}[Proof of Proposition $\ref{geenestdf}$] The following arguments are based on \cite[Section 4]{baba3} and \cite[Section 5]{bp}, but with highly non-trivial modifications due to the generality and the effects of jumps.

(i) The lower bound is easy. Set $A=B(x, r)$. By \eqref{exit} and \eqref{assum-2},
there exists a constant $c_1>0$ such that for all $x\in M$ and $r>0$,
$$\Pp^x\left(\tau_A\le c_1 \phi(r)\right)\le \frac{1}{2}$$ and so,
by conservativeness of the process (Proposition \ref{con}), we have
$$\Ee^x(\tau_A)\ge c_1 \phi(r)\Pp^x\left(\tau_A\ge c_1 \phi(r)\right)\ge \frac{c_1}{2}\phi(r).$$ We then have
\[
\frac{c_1 }{2}\phi(r)\le \Ee^x(\tau_A)=\int_Au_A(x,y)\,\mu(dy)\le u_A(x,x)\mu(A)\le c_2 V(r) u_A(x,x),
\]
where we used the fact $u_A(x,y)=u_A(y,x)
=\Pp^y (\sigma^0_x <\sigma^0_{A^c}) u_A(x,x) \le u_A(x,x)$. Thus, the lower bound is established.

Next, we prove the upper bound.
Let ${\rm{\sc Exp}}_\lambda$
be an independent exponential distributed random variable with mean $\lambda^{-1}$.
In the following, with some abuse of notation, we also use $\Pp^x$  for
the product probability of $\Pp^x$ and the law of ${\rm{\sc Exp}}_\lambda$.
We claim that there exists a constant
$c_3>0$ such that
\begin{equation}\label{eq:rlamtau}
\Pp^x({\rm{\sc Exp}}_\lambda\le \tau_A)\le (c_3 \lambda \phi(r))\wedge 1,\quad r,\lambda>0, x\in M.
\end{equation}
To prove this, we first note that
\begin{equation}\label{eq:rlamwfq}
\Pp^x(\tau_A\ge t)\le \exp (-t/ (c_3\phi(r))), \quad r,t>0, x\in M.
\end{equation}
Indeed, since for any $x\in M$ and $t,r>0$,  $$\Pp^x(\tau_{B(x,2r)}\ge t)\le \int_{B(x,2r)}p(t,x,y)\,\mu(dy)\le \frac{c_4 V(2r)}{V(\phi^{-1}(t))},$$ by
\eqref{eqn:univd*}
and \eqref{assum-2}, there is a constant $c_5>0$ such that $$\Pp^x(\tau_{B(x,2r)}\ge c_5 \phi(r))\le 1/2$$ for all $x\in M$
and $r>0$. So, by induction and the Markov property, we have for each $k\in {\mathbb N}$,
\begin{align*}
\Pp^x(\tau_A\ge c_5 (k+1)\phi(r))\le &\Ee^x\Big[1_{\{\tau_A\ge c_5 k \phi(r)\}}\Pp^{X_{c_5 k \phi(r)}}(\tau_{B(X_0,2r)}\ge c_5 \phi(r))\Big]
\le (1/2)^{k+1},
\end{align*}
which immediately yields \eqref{eq:rlamwfq}.
Using \eqref{eq:rlamwfq}, we have
\begin{align*}
\Pp^x({\rm{\sc Exp}}_\lambda\le \tau_A)=&\int_0^\infty\lambda e^{-\lambda t}\Pp^x(\tau_A\ge t)\,dt
\le \int_0^\infty\lambda e^{-\lambda t}\exp (-t/(c_3\phi(r)))\,dt\\
=&\lambda(\lambda+1/(c_3\phi(r)))^{-1}\le c_3\lambda \phi(r),
\end{align*}
so \eqref{eq:rlamtau} is established.

Now using \eqref{eq:rlamtau} with the choice of $\lambda=(2c_3\phi(r))^{-1}$, the fact that $u_A(y,x)\le u_A(x,x)$ and the strong Markov property, we have
\[
u_A(x,x)\le u^\lambda (x,x)+\Pp^x({\rm{\sc Exp}}_\lambda\le \tau_A)u_A(x,x)
\le u^\lambda (x,x)+(1/2) u_A(x,x).
\]
This, along with \eqref{a-two-sidedup}, \eqref{assum-3} and \eqref{assum-2}, gives us
\[
u_A(x,x)\le 2u^\lambda(x,x)\le 2\int_0^\infty e^{-\lambda t} \frac{1}{V(\phi^{-1}(t))}\,dt\le c_6  \frac{\phi(r)}{V(r)}.
\]

(ii) Write $A=B(x_0,R)$ and $B=B(y, c_*d(x,y))$, where $0<c_*<1$ is chosen later. Using the strong Markov property and Proposition \ref{con},
\[
u_A(y,y)=u_B(y,y)+\Ee^y\left(1-f_y(X_{\tau_B})\right)u_A(y,y),
\]
where $f_y(x):=\Pp^{x}(\sigma^0_y>\tau_A)$. Thus,
\begin{equation}\label{eq:uaubfoe}
u_B(y,y)=u_A(y,y)\Ee^y[f_y(X_{\tau_B})].
\end{equation}
Since $f_y(\cdot)$ is harmonic on $A\setminus \{y\}$,
by Proposition \ref{thm:PHI_hw} (we only use the elliptic Harnack inequality here),
there exist two constants $c_1,c_2>0$ such that
\begin{equation}\label{eq:Harconseq}
c_1\le f_y(z)/f_y(z')\le c_2,~~\quad\, \forall z,z'\in B(y, c_*kd(x,y))\setminus B,
\end{equation}
where we choose $k>0$ to satisfy $1<c_*k<3/2$. Note that $1<c_*k$ is required in order to
guarantee that $x\in B(y, c_*kd(x,y))\setminus B$.
Using the jump kernel of the process $X$ (see Proposition \ref{jump}) and the L\'evy system formula (see for example \cite[Appendix A]{CK1}), we have
\begin{align*}
\Pp^y (X_{\tau_B\wedge t}\notin B(y, c_*kd(x,y)))=&\Ee^y\Big[\int_0^{\tau_B\wedge t}\int_{B(y, c_*kd(x,y))^c}
J(X_s,u)\,\mu(du)\,ds\Big]\\
\le &\Ee^y\Big[\int_0^{\tau_B\wedge t}\int_{B(y, c_*kd(x,y))^c}
\frac {c_3\,\mu(du)\,ds}{V(d(X_s,u))\phi(d(X_s,u))}\Big]\\
\le  &\frac{c_4 \Ee^y[\tau_B\wedge t]}{\phi(c_*(k-1) d(x,y))}\le c_5(k-1)^{-d_3},
\end{align*} where in the last line we have used \eqref{assum-2}, \eqref{assum-4} and the fact that for any $x,y\in M$, $\Ee^y(\tau_B)\le c_0\phi(c_*d(x,y))$ due to
\eqref{eq:rlamwfq} (e.g.\ see \eqref{upperexit}). Note that the constant
$c_5>0$ is independent of $c_*$ and $k$.
We choose $k$ large enough and $c_*$ small enough such that $c_5(k-1)^{-d_3}<1/2$ and $1<c_*k<3/2$.
Taking $t\to \infty$ in the inequality above, we have $$\Pp^y (X_{\tau_B}\notin B(y, c_*kd(x,y)))\le 1/2.$$
Using this, \eqref{eq:uaubfoe} and \eqref{eq:Harconseq}, we find that
\begin{align*}
\Pp^{x}(\sigma^0_y>\tau_A)/2=f_y(x)/2\le &c_2\Ee^y[1_{\{X_{\tau_B}\in B(y, c_*kd(x,y))\}}f_{y}(X_{\tau_B})]
\le c_2\Ee^y[f_{y}(X_{\tau_B})]\\
= & c_2\frac{u_B(y,y)}{u_A(y,y)}\le c_6 \frac{1}{u_A(y,y)}\frac{\phi(d(x,y))}{V(d(x,y))},
\end{align*}
where we use (i) in the last inequality. We thus obtain (ii).

(iii) From \eqref{assum-3}, we know that $$ c^{-1} \frac{\lambda^{-1}}{ V(\phi^{-1}(\lambda^{-1}))}\le \int_0^\infty e^{-\lambda t} \frac{1}{V(\phi^{-1}(t))}\,dt\le c\frac{\lambda^{-1}}{ V(\phi^{-1}(\lambda^{-1}))}$$ for some constant $c\ge 1$ and $\lambda>0$. Then, for all $r>0$,
$$ c^{-1} \frac{\phi(r)}{ V(r)}\le \int_0^\infty e^{-t/\phi(r)} \frac{1}{V(\phi^{-1}(t))}\,dt\le c\frac{\phi(r)}{ V(r)},$$ which implies
that for any $s,t>0$,
\begin{equation}\label{eq:nobo3c}
\frac{\phi(s)}{V(s)}\le c^2 \frac{\phi(s+t)}{V(s+t)}.
\end{equation}
Using \eqref{eq:nobo3c},
the desired inequality is trivial when $d(x,y)\ge e^{-1}$ by taking $c_4=\frac{c^2 V(e^{-1})}{\phi(e^{-1})}$.
Let
$n\in {\mathbb N}$ be such that $e^{-n-1}\le d(x,y)<e^{-n}$ and set $\tau_m=\tau_{B(y, e^{-m})}$
for each $m\in {\mathbb N}$. Then,
\begin{align*}
&1-\Ee^y[e^{- \sigma^0_x}]=\Pp^y(\sigma^0_x\ge {\rm{\sc Exp}}_1)\\
&\le\Pp^y(\sigma^0_x\ge {\rm{\sc Exp}}_1, {\rm{\sc Exp}}_1<\tau_n)+\sum_{m=1}^n
\Pp^y (\sigma^0_x\ge {\rm{\sc Exp}}_1, \tau_m\le {\rm{\sc Exp}}_1<\tau_{m-1})\\
&\quad +
\Pp^y(\sigma^0_x\ge {\rm{\sc Exp}}_1, {\rm{\sc Exp}}_1\ge \tau_0)\\
&\le \Pp^y({\rm{\sc Exp}}_1<\tau_n)+\sum_{m=1}^n
\Pp^y (\sigma^0_x\ge {\rm{\sc Exp}}_1, \tau_m\le {\rm{\sc Exp}}_1<\tau_{m-1})+
\Pp^y(\sigma^0_x\ge\tau_0)\\
&\le \Pp^y({\rm{\sc Exp}}_1<\tau_n)+\sum_{m=1}^n
\Pp^y (1_{\{\sigma^0_x\ge \tau_m, {\rm{\sc Exp}}_1\ge \tau_m, X_{\tau_m}\in B(y,e^{-m+1})\}}
\Pp^{X_{\tau_m}}({\rm{\sc Exp}}_1<\tau_{m-1}))\\
&\quad +
\Pp^y(\sigma^0_x\ge\tau_0)\\
&\le \Pp^y({\rm{\sc Exp}}_1<\tau_n)+\sum_{m=1}^n
\Pp^y (\sigma^0_x\ge \tau_m)\sup_{z\in B(y,e^{-m+1})}\Pp^{z}({\rm{\sc Exp}}_1<\tau_{B(y, e^{-m+1})})\\
&\quad + \Pp^y(\sigma^0_x\ge\tau_0)\\
&\le  c_1\phi(e^{-n})+c_2\sum_{m=1}^n \phi(e^{-n})V(e^{-m})/V(e^{-n})+ c_3\phi(e^{-n})/V(e^{-n})\\
&\le c_4\phi(e^{-n})/V(e^{-n})\le c_5\phi(d(x,y))/V(d(x,y)),
\end{align*}
where we used (i), (ii), \eqref{eq:rlamtau},
\eqref{eqn:univd*} and \eqref{assum-2} in
the fifth inequality, and
\eqref{eqn:univd*} and \eqref{assum-2} in the last line.
\end{proof}

\subsection{Existence and estimates for local times}
Let $(A_t)_{t\ge0}$ be a continuous additive functional of the process $X$, i.e.\
\begin{itemize}
\item $t\mapsto A_t$ is almost surely continuous and nondecreasing with $A_0=0$;
\item $A_t\in \mathscr{F}_t$;
\item $A_{t+s}(\omega)=A_t(\omega)+A_s(\theta_t\omega)$ for all $s,t\ge 0.$
\end{itemize} Set $T_A=\inf\{t>0: A_t>0\}$. $A_t$ is called a local time of the process $X$ at $x$, if $\Pp^x(T_A=0)=1$ and $\Pp^y(T_A=0)=0$ for all $y\notin x$. The reason that $A_t$ is called a local time at $x$ for the process $X$ is that the function $t\mapsto A_t$ is the distribution function of a measure supported on the set $\{t|X_t=x\}$, see e.g.\ \cite[V.\ 3]{BulG}. The next proposition gives us a necessary and sufficient condition for the existence of a local time.

\begin{proposition}\label{existence} The process $X$ has a local time for all $x\in M$, if and only if
\begin{equation}\label{existence-1} \int_0^1 \frac{ 1}{ V(\phi^{-1}(t))}\,dt<\infty.\end{equation} Moreover, we can choose a version of
the local time at $x$, which will denote by $l(x,t)$, by requiring the following property.
\begin{itemize}
\item[(1)] The function $(\omega, t, x)\mapsto l(x,t)(\omega)$ is jointly measurable such that
the following density of occupation formula holds for all non-negative  Borel measurable function $f$,
\begin{align}\label{e:ltf}
\int_0^tf(X_s)\,ds=\int_Mf(x)l(x,t)\,\mu(dx).
\end{align}
\item[(2)] For any $x,y\in M$ and $\lambda>0$,
\begin{equation}\label{res}\Ee^x\left( \int_0^\infty e^{-\lambda t}\, d l(y,t)\right)=u^\lambda(x,y).\end{equation}
\end{itemize}

\end{proposition}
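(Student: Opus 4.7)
My approach is to reduce the existence of a local time at $x$ to the finiteness of the $1$-resolvent density $u^1(x,x)$, read off the integral condition \eqref{existence-1} from the on-diagonal heat kernel bound, and then construct a good version of $l(x,\cdot)$ via approximation by normalized occupation measures. By the standard theory of positive continuous additive functionals for symmetric Hunt processes (\cite[V.3]{BulG}), a local time at a single point $x$ in the above sense exists if and only if $\{x\}$ is non-polar, which under Assumption \ref{assmp1} is equivalent to $u^1(x,x)<\infty$. The two-sided on-diagonal bound $p(t,x,x)\asymp 1/V(\phi^{-1}(t))$ from \eqref{a-two-sidedup}--\eqref{a-two-sidedlow} then gives
$$u^1(x,x)\asymp\int_0^\infty \frac{e^{-t}}{V(\phi^{-1}(t))}\,dt,$$
and since $V(\phi^{-1}(t))\to\infty$ as $t\to\infty$ by Proposition \ref{space}, the tail from $1$ to $\infty$ is always finite, so $u^1(x,x)<\infty$ if and only if \eqref{existence-1} holds.

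Assuming \eqref{existence-1}, I would construct $l(x,\cdot)$ as the limit (in $L^2(\Pp^y)$ and $\Pp^y$-a.s.\ along a subsequence) of
$$l^\varepsilon(x,t):=\frac{1}{\mu(B(x,\varepsilon))}\int_0^t\I_{B(x,\varepsilon)}(X_s)\,ds\qquad\text{as}\ \varepsilon\to 0.$$
A Fubini computation yields
$$\Ee^y\int_0^\infty e^{-\lambda t}\,dl^\varepsilon(x,t)=\frac{1}{\mu(B(x,\varepsilon))}\int_{B(x,\varepsilon)}u^\lambda(y,z)\,\mu(dz)\longrightarrow u^\lambda(y,x),$$
using H\"older continuity of $u^\lambda$ inherited from Proposition \ref{p:Holder_estimates}. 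An analogous computation of $\Ee^y(l^\varepsilon(x,t)-l^{\varepsilon'}(x,t))^2$ via the Markov property, expanded through the transition density and then bounded using the already established finiteness of $u^1(x,x)$, shows that $(l^\varepsilon(x,t))_\varepsilon$ is Cauchy in $L^2(\Pp^y)$ for every $y$. The subsequential a.s.\ limit is readily checked to be a continuous additive functional supported on $\{t:X_t=x\}$, hence is a local time at $x$, and \eqref{res} follows from the display above by passing to the limit.

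For property (1), I would establish the two-parameter moment estimate
$$\Ee^y\bigl(l(x,t)-l(x',t)\bigr)^{2}\le C(y,t)\,\frac{\phi(d(x,x'))}{V(d(x,x'))}$$
by expanding the square and applying the sharp bound of Proposition \ref{geenestdf}(iii) together with parts (i) and (ii). Kolmogorov's criterion then produces a version of $l$ that is jointly continuous, hence jointly measurable, in $(x,t)$. The density occupation formula \eqref{e:ltf} then follows from the identity
$$\int_M f(x)\,l^\varepsilon(x,t)\,\mu(dx)=\int_0^t \bar f_\varepsilon(X_s)\,ds,\qquad \bar f_\varepsilon(z):=\int_M f(x)\frac{\I_{B(x,\varepsilon)}(z)}{\mu(B(x,\varepsilon))}\,\mu(dx),$$
by passing to the limit for bounded continuous $f$ (the uniform volume doubling in \eqref{e:BV} ensures $\bar f_\varepsilon\to f$ in the relevant sense) and extending to all non-negative Borel $f$ by monotone class. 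The main obstacle is precisely this joint-regularity step: a generic parabolic H\"older bound for $p$ is not sharp enough, and one genuinely needs the refined exponent in Proposition \ref{geenestdf}(iii) in order for the Kolmogorov argument to produce a continuous-in-$x$ modification of $l$ --- this is exactly the point flagged in the remark following Proposition \ref{geenestdf}.
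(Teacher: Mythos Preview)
Your reduction of the existence question to $u^1(x,x)<\infty$ and the reading-off of \eqref{existence-1} from the on-diagonal bound is exactly what the paper does; that part is fine.

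Where you diverge from the paper is in handling properties (1) and (2). The paper does not construct the local time by hand: it simply invokes general theory --- \cite[Theorem 3.2]{MR} for the existence criterion, and then \cite[Theorem 1]{GK} together with \cite[VI.4.18]{BulG} for a jointly measurable version satisfying the occupation density formula and \eqref{res}. This is a two-line citation, valid under \eqref{existence-1} alone.

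Your constructive route has a genuine gap at the level of the stated hypotheses. Proposition~\ref{existence} sits \emph{before} Assumption~\ref{assmp3} is introduced and is asserted under \eqref{existence-1} only. But your argument for (1) calls Proposition~\ref{geenestdf}(iii), which requires \eqref{assum-3}; in the paper \eqref{assum-3} is deduced (Lemma~\ref{regular-1}) from Assumption~\ref{assmp3}, i.e.\ $d_3>d_2$. So you are assuming strictly more than the proposition allows. Moreover, a second-moment bound of order $\phi(d(x,x'))/V(d(x,x'))$ is typically not enough for a Kolmogorov-type continuity criterion on a space of volume growth $V$: one needs either higher moments or the exponential tail that the paper establishes later (Proposition~\ref{thm:estlochold}) via a Garsia argument --- again, under Assumption~\ref{assmp3}. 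In short, you are essentially re-proving Proposition~\ref{thm:estlochold} (joint continuity) in place of the far weaker joint \emph{measurability} that (1) actually asks for, and you cannot do so without the extra assumption. If you want a self-contained proof of (1) at this generality, the right route is the Getoor--Kesten citation, not a regularity argument.
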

\begin{proof}
According to \cite[Theorem 3.2]{MR}, the process $X$ has a local time for all $x\in M$ if and only if
$$u^\lambda(x,x)<\infty \quad\textrm{ for all }x\in M\textrm{ and some }\lambda>0.$$
Using Assumption \ref{assmp1} and the doubling properties of $V$ and $\phi$,
$$
u^\lambda(x,x)=\int_0^\infty e^{-\lambda t} p
(t,x,x)\,dt <\infty\quad \textrm{ for all } x\in M
$$ if and only if $$ \int_0^1 e^{-\lambda t} \frac{1}{V(\phi^{-1}(t))}\,dt<\infty, $$ which in turn is equivalent to \eqref{existence-1}.

Local times are defined up to a multiplicative constant, see \cite[V.\ 3.13]{BulG}. By \cite[Theorem 1]{GK} and \cite[VI.\ 4.18]{BulG}, we can choose a version of local times satisfying the desired properties (i) and (ii), also see the remark below \cite[Theorem 3.2]{MR}.
\end{proof}

Below we suppose that the local time $l(x,t)$ is always chosen to satisfy (1) and (2) in Proposition \ref{existence}, if \eqref{existence-1} is satisfied.
Note that, \eqref{assum-3} implies \eqref{existence-1}. By the strong Markov property and \eqref{res},
\begin{align*}
u^\lambda(x,y)=&\Ee^x \int_0^\infty e^{-\lambda t}\, d l(y,t)=\Ee^x \int_{\sigma_y^0}^\infty e^{-\lambda t}\, d l(y,t)\nonumber\\
=& \Ee^x e^{-\lambda\sigma^0_y}\Ee^y\int_0^\infty e^{-\lambda t} \,d l(y,t)= \Ee^x e^{-\lambda\sigma^0_y} u^\lambda(y,y). \label{eq:diagdom}\end{align*}
So,
\begin{equation}\label{eq:nofhoo}
\Ee^x[e^{- \sigma_y^0}]= u^1(x,y)/u^1(y,y),
\end{equation}
which is continuous because of the continuity of $p(t,x,y)$, see Proposition \ref{p:Holder_estimates}.
\qed

Let $d_2$ and $d_3$ be the constants in \eqref{eqn:univd*} and
\eqref{assum-2} respectively. Throughout the remainder of this
section, we always assume the following

\begin{assumption}\label{assmp3}
$d_3>d_2$.
\end{assumption}
The functions $V$ and $\phi$ respectively characterize the
underlying space and the process in question. Assumption
\ref{assmp3} means that the walk dimension of the process is greater
than the dimension of the space, which implies that the process
could stay at every point for efficiently long time; that is, the
local time of the process exists.

The following lemma is easy.
\begin{lemma}\label{regular-1} Under Assumption $\ref{assmp3}$,
 \eqref{assum-3} holds. In particular,
\begin{equation}\label{ass-v-phi} \int_0^t\frac{1}{V(\phi^{-1}(s))}\,ds \asymp \frac{t}{V(\phi^{-1}(t))},\quad t>0,\end{equation}  and so \eqref{existence-1} is satisfied. \end{lemma}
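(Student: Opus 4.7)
The plan is to set $h(t) := V(\phi^{-1}(t))$ and reduce everything to scaling estimates on $h$. From \eqref{eqn:univd*} and \eqref{assum-2}, inverting $\phi$ gives, for every $0<s\le t$,
\begin{equation*}
c\bigl(t/s\bigr)^{1/d_4}\le \phi^{-1}(t)/\phi^{-1}(s)\le c'\bigl(t/s\bigr)^{1/d_3},
\end{equation*}
and composing with \eqref{eqn:univd*} then yields
\begin{equation*}
c_1\bigl(t/s\bigr)^{d_1/d_4}\le h(t)/h(s)\le c_2\bigl(t/s\bigr)^{d_2/d_3}\qquad(0<s\le t).
\end{equation*}
The decisive point, which is exactly Assumption \ref{assmp3}, is that the exponent $\gamma:=d_2/d_3$ appearing in the upper bound satisfies $\gamma<1$.

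With this in hand I would first prove \eqref{ass-v-phi}. The lower bound is trivial: since $h$ is doubling, $h(s)\asymp h(t)$ for $s\in[t/2,t]$, hence $\int_0^t ds/h(s)\ge (t/2)/\bigl(c\,h(t)\bigr)$. For the upper bound, apply the scaling estimate in the form $1/h(s)\le c\,t^{\gamma}s^{-\gamma}/h(t)$ to get
\begin{equation*}
\int_0^t\frac{ds}{h(s)}\le \frac{c\,t^{\gamma}}{h(t)}\int_0^t s^{-\gamma}\,ds=\frac{c\,t}{(1-\gamma)\,h(t)},
\end{equation*}
where finiteness relies on $\gamma<1$. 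Specializing to $t=1$ and using $h(1)<\infty$ gives \eqref{existence-1} immediately.

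For \eqref{assum-3} I would split the integral at $t=1/\lambda$. On $[0,1/\lambda]$ we have $e^{-\lambda t}\in[e^{-1},1]$, so \eqref{ass-v-phi} applied at $t=1/\lambda$ gives both an upper and a lower bound of order $(\lambda h(1/\lambda))^{-1}$. On $[1/\lambda,\infty)$ the lower scaling bound $h(t)\ge c(\lambda t)^{d_1/d_4}h(1/\lambda)$ for $t\ge 1/\lambda$ gives, after the substitution $u=\lambda t$,
\begin{equation*}
\int_{1/\lambda}^\infty e^{-\lambda t}\frac{dt}{h(t)}\le \frac{1}{\lambda\,h(1/\lambda)}\int_1^\infty e^{-u}u^{-d_1/d_4}\,du,
\end{equation*}
which is a convergent constant times $1/(\lambda h(1/\lambda))$. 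Combining the two pieces gives the asserted two-sided bound $\int_0^\infty e^{-\lambda t}/h(t)\,dt\asymp \lambda^{-1}/h(\lambda^{-1})$, i.e.\ \eqref{assum-3}.

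The proof is essentially bookkeeping with polynomial scalings; the only substantive ingredient is Assumption \ref{assmp3}, which I would expect to be the point that a reader needs to verify carefully, since it is precisely what makes the exponent $\gamma=d_2/d_3$ strictly less than $1$ and so lets the integral $\int_0^t s^{-\gamma}ds$ converge. Everything else is a change of variable and the doubling properties already available.
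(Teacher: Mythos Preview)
Your proof is correct and rests on the same key scaling inequality as the paper, namely $c_1(t/s)^{d_1/d_4}\le h(t)/h(s)\le c_2(t/s)^{d_2/d_3}$ for $0<s\le t$ (which the paper records as \eqref{assum-qq}), together with the observation that Assumption~\ref{assmp3} makes the upper exponent $d_2/d_3$ strictly less than $1$.

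The organization, however, is inverted relative to the paper. The paper proves \eqref{assum-3} first---via the substitution $s=\lambda t$, splitting $\int_0^\infty e^{-s}f(s/\lambda)\,ds$ at $s=1$, and bounding $f(s/\lambda)/f(1/\lambda)$ by $s^{-d_2/d_3}$ on $(0,1)$ and by $1$ on $(1,\infty)$---and then deduces \eqref{ass-v-phi} from \eqref{assum-3} by the one-line trick $\int_0^t h(s)^{-1}\,ds\le e\int_0^\infty e^{-s/t}h(s)^{-1}\,ds$. You instead prove \eqref{ass-v-phi} directly from the scaling bound and then feed it back into the piece $[0,1/\lambda]$ of \eqref{assum-3}. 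Both routes are short and equivalent in difficulty; the paper's ordering has the small advantage that the lower bound in \eqref{ass-v-phi} follows from monotonicity alone ($h$ is increasing, so $\int_0^t h(s)^{-1}\,ds\ge t/h(t)$ without invoking doubling), and that the tail piece $\int_1^\infty e^{-s}f(s/\lambda)/f(1/\lambda)\,ds$ can be bounded by $\int_1^\infty e^{-s}\,ds$ directly from monotonicity of $f$, avoiding the lower scaling exponent $d_1/d_4$ altogether. Your approach has the advantage of making \eqref{ass-v-phi} self-contained rather than a corollary of the Laplace-transform estimate.
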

\begin{proof}
Let
$f(t):=\frac{1}{V(\phi^{-1}(t))}$ and
$$w(\lambda):=\int_0^\infty e^{-\lambda t} f(t)\,dt=\lambda^{-1}\int_0^\infty e^{-s} f(s/\lambda)\,ds.$$
Since $f$ is decreasing,
we see that
$$
w(\lambda) \ge \lambda^{-1}\int_{1/2}^{1} e^{-s} f(s/\lambda)\,ds \ge \lambda^{-1}f(1/\lambda)\int_{1/2}^{1} e^{-s}\, ds=c_0\lambda^{-1}f(1/\lambda).$$
On the other hand, it follows from \eqref{eqn:univd*} and \eqref{assum-2} that
 \begin{equation}\label{assum-qq}
 c_1\Big(\frac{R}{r}\Big)^{d_1/d_4}\le \frac{V(\phi^{-1}(R))}{V(\phi^{-1}(r))}\le c_2\Big(\frac{R}{r}\Big)^{d_2/d_3}\end{equation}
 holds for all $0<r\le R$ and some constants $c_1,c_2>0$. This along with the assumption $d_3>d_2$ yields that
 \begin{align*}
\frac{\lambda w(\lambda)}{ f(1/\lambda)}
&= \int_0^1 e^{-s} \frac{f(s/\lambda)}{ f(1/\lambda)}\, ds +\int_1^\infty e^{-s} \frac{f(s/\lambda)}{ f(1/\lambda)}\, ds\\
&\le c_2\int_0^1 e^{-s}s^{-d_2/d_3}\, ds +\int_1^\infty e^{-s} \, ds <\infty.
\end{align*}
We have proved \eqref{assum-3}.

 We now verify \eqref{ass-v-phi}.
  By the increasing properties of $V$ and $\phi$, for any $t>0$, $$\int_0^t\frac{1}{V(\phi^{-1}(s))}\,ds \ge \frac{t}{V(\phi^{-1}(t))}.$$
The upper bound of \eqref{ass-v-phi} can be obtained
 from \eqref{assum-3} as follows:
$$\int_0^t\frac{1}{V(\phi^{-1}(s))}\,ds \le e\int_0^t e^{-s/ t} \frac{1}{V(\phi^{-1}(s))}\,ds \le e\int_0^\infty e^{-s/ t} \frac{1}{V(\phi^{-1}(s))}\,ds \le \frac{c_3t}{V(\phi^{-1}(t))}.$$ The proof is complete. \end{proof}

From now on, we will always consider versions of the local time at
$x$, denote by $l(x,t)$, satisfying the results in  Proposition
\ref{existence}. The following statement is Kac's moment formula of
the local time. Since \eqref{assum-3} implies \eqref{res}, this
directly follows from \cite[Theorem 3.10.1]{MR1}.
\begin{proposition}\label{kac}
For any $x, y_i\in M$ with $1\le i\le n$ and $t>0$,
$$\Ee^x\Pi_{i=1}^n l(y_i,t)=\sum_{\pi} \Ee^xl(y_{\pi_1}, t)\cdots \Ee^{y_{\pi_{n-1}}}l(y_{\pi_n}, t),$$ where the sum runs over all permutations $\pi$ of $\{1,\ldots, n\}.$ In particular, for any $x,y\in M$ and $n\ge 1$,
$$\Ee^x(l(y,t))^n=n!\Ee^xl(y, t) \big(\Ee^y l(y, t)\big)^{n-1}.$$  \end{proposition}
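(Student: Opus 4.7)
The plan is to apply Kac's moment formula, \cite[Theorem 3.10.1]{MR1}, whose only nontrivial prerequisite in our setting is the resolvent representation $\Ee^x \int_0^\infty e^{-\lambda t}\, dl(y,t) = u^\lambda(x,y)$; this is exactly \eqref{res} from Proposition \ref{existence}(2), which is available because Assumption \ref{assmp3} yields \eqref{assum-3} via Lemma \ref{regular-1}. Granted this input, the proposition is a direct consequence of the cited theorem; below I sketch the structural argument underlying it.

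First I would write the product as a sum of simplex integrals via Fubini: $\prod_{i=1}^n l(y_i, t) = \int_{[0,t]^n} \prod_i dl(y_i, s_i) = \sum_\pi \int_{0 < s_1 < \cdots < s_n < t} \prod_i dl(y_{\pi_i}, s_i)$, where $\pi$ ranges over permutations of $\{1,\ldots,n\}$ and the sum reflects the decomposition of the cube into $n!$ open simplices (the boundary set has zero measure for the product of local times $\Pp^x$-a.s.). On each simplex I would then apply the strong Markov property $n$ times: since the CAF $l(y_{\pi_1},\cdot)$ is concentrated on $\{s: X_s = y_{\pi_1}\}$, conditioning at the innermost variable $s_1$ restarts the process from $y_{\pi_1}$ with remaining horizon $t-s_1$, so the nested integral in $s_2,\dots,s_n$ becomes the same object with $x$ replaced by $y_{\pi_1}$ and $n$ replaced by $n-1$. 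Iterating and using the identity $\Ee^z\, dl(w,u) = p(u,z,w)\, du$ yields
\begin{align*}
\Ee^x \int_{0 < s_1 < \cdots < s_n < t} \prod_{i=1}^n dl(y_{\pi_i}, s_i) = \int_{\substack{u_i > 0 \\ u_1 + \cdots + u_n < t}} p(u_1, x, y_{\pi_1}) \prod_{i=2}^n p(u_i, y_{\pi_{i-1}}, y_{\pi_i}) \, du.
\end{align*}

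To conclude, I would relax each range $u_i < t - \sum_{j<i}u_j$ to $u_i < t$ (the integrand is nonnegative) and factor, identifying $\int_0^t p(u, z, w)\, du = \Ee^z l(w, t)$ from the occupation density formula \eqref{e:ltf} (applied with $f$ a bump concentrated near $w$ and then passed to a point mass via the continuity of $p$ supplied by Proposition \ref{p:Holder_estimates}). This produces the right-hand side of the claim; the ``in particular'' single-point statement follows because the $n!$ permutations yield identical summands. The main obstacle is the iterated strong Markov computation, since $dl(y,\cdot)$ is supported on a time set of Lebesgue measure zero and one is conditioning ``at the random time $s_1$'' where the measure has an atom of activity; rigorously, I would smooth $dl(y,s)$ by $\mu(B(y,\varepsilon))^{-1}$ times the occupation measure on $B(y,\varepsilon)$ and pass to $\varepsilon \to 0$, or, equivalently, first establish the identity in Laplace transforms using \eqref{res} and invert---this is the route taken in \cite[Theorem 3.10.1]{MR1}, to which the proof ultimately defers.
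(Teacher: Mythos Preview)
Your approach matches the paper's exactly: both simply cite \cite[Theorem~3.10.1]{MR1} after observing that its hypothesis \eqref{res} is in force (you trace this through Assumption~\ref{assmp3} and Lemma~\ref{regular-1} to \eqref{assum-3}, which is precisely the paper's one-line justification). Your supplementary sketch goes beyond what the paper provides; note only that the ``relax the range $u_i < t-\sum_{j<i}u_j$ to $u_i<t$'' step yields $\le$ rather than $=$, which is in fact all that \cite[Theorem~3.10.1]{MR1} asserts and all that is used downstream in Lemma~\ref{local-lemma-1}.
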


Proposition \ref{geenestdf} combining with some general theory
yields the following. (See \cite[Theorem 1.1]{Cro} for the discrete
version.)

\begin{proposition}
There exists a positive constant $c_1>0$ such that for all
$x,y,z\in M$ and $u,\delta> 0$,
\begin{equation}\label{eq:ltex3}
\Pp^z(\sup_{0\le t\le u}|l(x,t)-l(y,t)|>\delta)\le 2e^ue^{-c_1\delta\sqrt{V(d(x,y))/\phi(d(x,y))}}.
\end{equation}
\end{proposition}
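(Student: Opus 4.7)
My plan is to follow the strategy of \cite{Cro}, adapted to the continuous-time setting, in three stages: (a) bound the $2n$-th moment of $L_\zeta:=l(x,\zeta)-l(y,\zeta)$ where $\zeta\sim\mathrm{Exp}(1)$ is independent of $X$; (b) convert this moment bound into a sub-exponential tail for $|L_\zeta|$; and (c) upgrade the tail at the single random time $\zeta$ to the supremum over $[0,u]$ via the strong Markov property of $X$ and the memoryless property of $\zeta$. Write $\rho:=\phi(d(x,y))/V(d(x,y))$ throughout.

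For stage (a), I will apply Kac's formula (Proposition~\ref{kac}) to the process killed at rate $1$ and expand $L_\zeta^{2n}=\sum_{s\in\{x,y\}^{2n}}(-1)^{\#\{i:s_i=y\}}l(s_1,\zeta)\cdots l(s_{2n},\zeta)$ as a signed sum. Re-summing yields the matrix identity $\Ee^z[L_\zeta^{2n}]=(2n)!\,\vec a^T A^{2n-1}\vec 1$, where $\vec 1=(1,1)^T$, $\vec a=(u^1(z,x),-u^1(z,y))^T$, and
\[
A=\begin{pmatrix} u^1(x,x) & -u^1(x,y)\\ u^1(x,y) & -u^1(y,y)\end{pmatrix}.
\]
The crucial observation is that, by Proposition~\ref{geenestdf}(iii) combined with the resolvent identity $u^1(x,y)=\Ee^x[e^{-\sigma_y^0}]\,u^1(y,y)=\Ee^y[e^{-\sigma_x^0}]\,u^1(x,x)$, each entry of $A\vec 1$ is of order $\rho$ and both $\mathrm{tr}(A)=u^1(x,x)-u^1(y,y)$ and $\det(A)=u^1(x,y)^2-u^1(x,x)u^1(y,y)$ are of order $\rho$, so the eigenvalues of $A$ are of order $\sqrt{\rho}$. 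Combined with $\|\vec a\|$ being uniformly bounded (from Proposition~\ref{geenestdf}(i) and the bound on $u^1$ at the diagonal available under Assumption~\ref{assmp3}), a spectral estimate then gives $\|A^{2n-1}\vec 1\|\le C_0(C_1\rho)^n$, and hence
\[
\Ee^z[L_\zeta^{2n}]\le C_0\,(2n)!\,(C_1\rho)^n,\qquad n\ge1.
\]

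For stage (b), this moment bound yields $\Ee^z[\cosh(\lambda L_\zeta)]\le C_0/(1-\lambda^2 C_1\rho)$ for $\lambda^2 C_1\rho<1$. Choosing $\lambda=(2C_1\rho)^{-1/2}$ and applying Markov's inequality via $e^{\lambda|L_\zeta|}\le 2\cosh(\lambda L_\zeta)$ gives
\[
\sup_{w\in M}\Pp^w(|L_\zeta|>\eta)\le C_2 \exp\!\big(-c_2\eta\sqrt{V(d(x,y))/\phi(d(x,y))}\big),\qquad \eta>0.
\]
For stage (c), set $T:=\inf\{t\ge 0:|l(x,t)-l(y,t)|>\delta\}$. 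Independence of $\zeta$ and $X$ with $\Pp(\zeta>u)=e^{-u}$ gives
\[
\Pp^z(\sup_{0\le t\le u}|l(x,t)-l(y,t)|>\delta)=e^{u}\Pp^z(T\le u,\zeta>u)\le e^u\Pp^z(T\le\zeta).
\]
By the strong Markov property at $T$ and memorylessness of $\zeta-T$ on $\{T\le\zeta\}$, the increment $\Delta:=(l(x,\zeta)-l(x,T))-(l(y,\zeta)-l(y,T))$ is distributed as $L_{\zeta'}$ under $\Pp^{X_T}$ with $\zeta'\sim\mathrm{Exp}(1)$. Since $|l(x,T)-l(y,T)|>\delta$, whenever $|\Delta|<\delta/2$ we have $|l(x,\zeta)-l(y,\zeta)|>\delta/2$. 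Provided $\delta\ge C_3\sqrt\rho$ for a constant $C_3$ chosen so that the tail bound from (b) gives $\sup_w\Pp^w(|L_\zeta|>\delta/2)\le 1/2$, this yields $\Pp^z(T\le\zeta)\le 2\Pp^z(|L_\zeta|>\delta/2)$, which combined with (b) produces the claim. For $\delta<C_3\sqrt\rho$ the stated inequality is automatic after choosing $c_1$ small enough that its right-hand side exceeds $1$.

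The main obstacle is the moment bound in stage (a). Although it is immediate that $A\vec 1=O(\rho)$, the operator norm of $A$ itself is only $O(1)$ (because of the $u^1(x,y)$ entries), so a direct triangle-inequality estimate would lose a factor $\rho^{n-1}$ and give the wrong scale $\Ee^z[L_\zeta^{2n}]=O(\rho)$. The correct scale $O(\rho^n)$ rests essentially on the fact that both eigenvalues of $A$ are of order $\sqrt\rho$, a cancellation phenomenon which uses symmetry of $u^1$ together with Proposition~\ref{geenestdf}(iii) in an essential way.
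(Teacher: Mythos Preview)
Your approach is correct in outline and is a genuinely different route from the paper's. The paper does not re-derive anything: it simply invokes Blumenthal--Getoor \cite[V.~3.25 and 3.28]{BulG}, which gives directly
\[
\Pp^z\Big(\sup_{0\le t\le u}|l(x,t)-l(y,t)|>\delta\Big)\le 2e^{u}e^{-\delta/(2q(x,y))},
\qquad q(x,y):=\big(1-\Ee^x[e^{-\sigma^0_y}]\Ee^y[e^{-\sigma^0_x}]\big)^{1/2},
\]
and then observes that $q(x,y)^2\le (1-\Ee^x[e^{-\sigma^0_y}])+(1-\Ee^y[e^{-\sigma^0_x}])\le c\,\phi(d(x,y))/V(d(x,y))$ by Proposition~\ref{geenestdf}(iii), so that $1/q(x,y)\ge c'\sqrt{V(d(x,y))/\phi(d(x,y))}$. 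Your argument, by contrast, rebuilds the Blumenthal--Getoor inequality from scratch in the spirit of \cite{Cro}: Kac's formula plus the $2\times 2$ matrix identity for the even moments of $L_\zeta$, followed by a Chernoff bound and a first-passage/memoryless upgrade to the supremum. What you gain is self-containedness; what the paper gains is brevity.

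One point to tighten in your write-up: the phrase ``a spectral estimate then gives $\|A^{2n-1}\vec 1\|\le C_0(C_1\rho)^n$'' hides exactly the subtlety you flag at the end. Since the eigenvector matrix of $A$ degenerates as $\rho\to 0$, you cannot simply use spectral radius. The clean way is Cayley--Hamilton: with $t=\mathrm{tr}\,A$ and $d=\det A$ both $O(\rho)$, write $A^k=p_kA+q_kI$ with $p_{k+1}=tp_k+q_k$, $q_{k+1}=-dp_k$, and check by induction on the recursion that $|p_{2n-1}|\le C^{2n}\rho^{\,n-1}$ and $|q_{2n-1}|\le C^{2n}\rho^{\,n}$; combined with $\|A\vec 1\|\le C\rho$ this yields $\|A^{2n-1}\vec 1\|\le C_0(C_1\rho)^n$ as you claim. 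Once this is made explicit, stages~(b) and~(c) go through exactly as you describe.
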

\begin{proof}
Let
\begin{align}\label{e:defq}
q(x,y):= (1-\Ee^x[e^{- \sigma^0_y}]\Ee^y[e^{- \sigma^0_x}])^{1/2}.
\end{align}
Note that, since $y\mapsto \Ee^y[e^{- \sigma_x}]$
is continuous (see \eqref{eq:nofhoo}),  by \cite[V. 3.25 and 3.28]{BulG}
$$
\Pp^z(\sup_{0\le t\le u}|l(x,t)-l(y,t)|>\delta)\le
2e^ue^{-\delta/(2q(x,y))}.$$ Since Proposition \ref{geenestdf}(iii)
implies  that
\begin{align}\label{e:ineq}
q(x,y)\le
(1-\Ee^x[e^{- \sigma^0_y}])+(1-\Ee^y[e^{- \sigma_x^0}])\le c_1\phi(d(x,y))/V(d(x,y)),
\end{align}
the proof is complete.
\end{proof}

The next proposition is an analogue of \cite[Lemma 5.5]{fst}. Since
we do not have self-similarity of the process, serious modifications
of the proof are needed. We will also use  a version of Garsia's
lemma (Lemma \ref{thm:Garlw}), which is proved in Appendix
\ref{A:2}.

\begin{proposition}\label{thm:estlochold}
There exist a version of the local time $l(x,t)(\omega)$ such that
almost surely $(x,t) \to l(x,t)(\omega)$ is continuous; moreover,
there exist constants $c_1,c_2>0$ such that for all $z\in M$,
$L,u,A>0$,
\begin{align*}&\Pp^z(\sup_{d(x,y)\le L}\sup_{0\le t\le u}|l(x,t)-l(y,t)|\ge A)\\
&\le \frac{c_1V(\phi^{-1}(u) \vee L)^2}{V(L)^{2}}
\exp\left(-c_2A\frac{ V(\phi^{-1}(u)\vee L)}{\phi(\phi^{-1}(u)\vee
L)}\sqrt{\frac{V((L/\phi^{-1}(u)) \wedge 1)}{\phi((L/\phi^{-1}(u))
\wedge 1)}} \right)
\end{align*}
\end{proposition}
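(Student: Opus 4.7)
The plan is a Garsia--Rodemich--Rumsey argument applied to the two-parameter random field $(x,t)\mapsto l(x,t)$, using \eqref{eq:ltex3} as the sole input. Fix $z\in M$, $u>0$, $L>0$, set $R:=\phi^{-1}(u)\vee L$, and work on the ball $B:=B(z,\kappa R)$ for a constant $\kappa\ge 1$ chosen below. The radius $R$ is the natural joint scale: the process started at $z$ has spread of order $\phi^{-1}(u)$ up to time $u$, and we need room for pairs $x,y\in B$ at distance up to $L$. Under Assumption \ref{assmp3} ($d_3>d_2$), the function $p(r):=\sqrt{\phi(r)/V(r)}$ is strictly increasing by \eqref{eqn:univd*} and \eqref{assum-2}, so it is a legitimate modulus.

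Rewriting \eqref{eq:ltex3} as $\Pp^z(S_{x,y}>\delta\,p(d(x,y)))\le 2 e^u e^{-c_1\delta}$ for $S_{x,y}:=\sup_{0\le t\le u}|l(x,t)-l(y,t)|$, take the exponential Young function $\Psi(s):=e^{c_1 s/2}-1$. A direct tail integration gives $\E^z[\Psi(S_{x,y}/p(d(x,y)))]\le C e^u$, hence
\begin{equation*}
I:=\int_{B\times B}\Psi\bigl(S_{x,y}/p(d(x,y))\bigr)\,\mu(dx)\mu(dy)
\end{equation*}
satisfies $\E^z[I]\le C e^u V(R)^2$. Since $(x,y)\mapsto S_{x,y}$ is symmetric and obeys the triangle inequality, the version of Garsia's lemma given in Lemma \ref{thm:Garlw} applies with $\Psi$ and $p$, and yields, on $\{I\le T\}$,
\begin{equation*}
\sup_{x,y\in B,\ d(x,y)\le L} S_{x,y}\le C\int_0^{L}\Psi^{-1}\!\bigl(CT/V(r)^2\bigr)\,dp(r)\asymp \sqrt{\phi(L)/V(L)}\,\log\!\bigl(CT/V(L)^2\bigr),
\end{equation*}
where the final asymptotic uses \eqref{eqn:univd*} and \eqref{assum-2}, and convergence of the integral at $0$ is exactly the content of Assumption \ref{assmp3}. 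Combining with $\Pp^z(I>T)\le C e^u V(R)^2/T$ and optimising in $T$, one obtains a preliminary bound of the form $C e^u (V(R)/V(L))^{2}\exp(-c A\sqrt{V(L)/\phi(L)})$.

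The principal obstacle is that this preliminary bound carries a spurious factor $e^u$ and lacks the multiplier $V(R)/\phi(R)$ that appears in the exponent of the stated result. Both deficiencies are cured by the rescaling device announced in the introduction: apply the entire argument in the rescaled metric $\tilde d:=d/\phi^{-1}(u)$. Under this rescaling a unit time of $X$ corresponds to the exit time from a unit ball, the tail in \eqref{eq:ltex3} sharpens to $\exp\!\bigl(-c\delta\,\sqrt{V(\tilde L\wedge 1)/\phi(\tilde L\wedge 1)}\,V(\phi^{-1}(u))/\phi(\phi^{-1}(u))\bigr)$ with $\tilde L=L/\phi^{-1}(u)$, and the $e^u$ factor becomes $e^{O(1)}$ and is absorbed. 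Repeating the Garsia computation in the rescaled setup yields the precise exponent in the claim. Finally, joint continuity of $(x,t)\mapsto l(x,t)$ follows from the quantitative modulus estimate by a Borel--Cantelli / dyadic chaining argument over a sequence $L_n\to 0$ with $A_n\to 0$ suitably chosen, together with the $t$-continuity built into the definition of a continuous additive functional.
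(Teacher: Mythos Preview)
Your overall strategy --- Garsia's lemma with $\Psi(s)=e^{cs}-1$ and modulus $p(r)=\sqrt{\phi(r)/V(r)}$, combined with a rescaling to the natural spatial scale $R=\phi^{-1}(u)\vee L$ --- is exactly the paper's. The rescaling is indeed what converts the spurious $e^u$ into $e^{O(1)}$ and produces the factor $V(R)/\phi(R)$ in the exponent (because the scaled local time is $(V(R)/\phi(R))\,l$).

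The genuine gap is the restriction to a fixed ball $B=B(z,\kappa R)$. The supremum in the statement is over \emph{all} pairs $x,y\in M$ with $d(x,y)\le L$, and since $X$ has jumps of arbitrary size, $l(x,t)$ can be nonzero for $x$ arbitrarily far from $z$. Intersecting with the event $\{\sup_{s\le u}d(X_s,z)\le\kappa R\}$ does localize the local times, but that event has only a polynomial tail $\asymp\kappa^{-d_3}$, which for fixed $\kappa$ eventually dominates the exponential term you want, and letting $\kappa$ depend on $A$ inflates the Garsia prefactor $V(\kappa R)^2$ and destroys the stated bound. So your Garsia integral over $B\times B$ cannot control the full supremum, even after rescaling.

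The paper's resolution is to take the Garsia functional $F$ over the entire diagonal strip $\{d_{\delta'}(x,y)\le 1\}$ (an unbounded domain) and then prove $\Ee^z[F]$ is bounded \emph{uniformly in the scale}. This is the real technical core: one covers the strip by products $B_{\delta'}(x_i,2)^2$ with bounded overlap, uses the strong Markov property at $\sigma_{B_{\delta'}(x_i,2)}$ to bound the contribution of each piece by a constant times $\Pp^z(\sigma_{B_{\delta'}(x_i,2)}\le 1)$, and then shows $\sum_i\Pp^z(\sigma_{B_{\delta'}(x_i,2)}\le 1)$ is bounded via the occupation-time identity $\int_M l^{(\delta')}(y,4)\,\mu_{\delta'}(dy)=4$ together with a uniform lower bound on $\Ee^w[\int_0^3 1_{B_{\delta'}(x_i,4)}(X^{(\delta')}_s)ds]$ from Proposition~\ref{w-ndle}. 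This step is absent from your sketch and cannot be recovered from the ball-localized argument.

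A secondary issue: you deduce joint continuity from the Garsia modulus, but Lemma~\ref{thm:Garlw} only gives the bound for $\mu\times\mu$-a.e.\ pairs unless $f$ is already continuous, so there is a mild circularity. The paper avoids this by first establishing joint continuity independently through the Marcus--Rosen Gaussian-process criterion (using the metric $d_1(x,y)^2=u^1(x,x)+u^1(y,y)-2u^1(x,y)$ and a metric-entropy bound), and only then running Garsia to obtain the quantitative estimate.
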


\begin{proof}
First note that Assumption
\ref{assmp3} and  \eqref{e:ineq} (where $q$ is defined by \eqref{e:defq}) imply that the local time $l(x,t)(\omega)$
exists and it is jointly
continuous almost surely.

In fact,
since $\sup_{z \in M} u^1(z,z)<\infty$, by \eqref{eq:nofhoo} we see that for any $x,y\in M$,
$$
d_1(x,y)^2:=u^1(x,x)+u^1(y,y)-2u^1(x,y) \le 2(\sup_{z \in M}u^1(z,z)) q(x,y)^2
\le (c_0')^2q(x,y)^2.
$$
Moreover,
by \eqref{eqn:univd*}, \eqref{assum-2},  Assumption \ref{assmp3} and  \eqref{e:ineq}, for any $x_0\in M$ and any $x,y \in B(x_0,1)$
it holds that
$$q(x,y) \le c_1' \frac{\phi(d(x,y))}{V(d(x,y))}\le c_2'd(x, y)^{d_3-d_2}.$$
Thus,  for all $x \in B(x_0,1)$ and
$\eps \in (0, c_0'c_2')$ small
enough so that $(\eps/(c_0'c_2'))^{1/(d_3-d_2)}+d(x_0,x)<1$, we have
\begin{align*}
\mu(\{y\in B(x_0,1) : d_1(x,y) <\eps\}) &\ge \mu\big(B(x,(\eps/(c_0'c_2'))^{1/(d_3-d_2)})\big)\\
& \ge c_3' V\big((\eps/(c_0'c_2'))^{1/(d_3-d_2)}\big)  \ge
c_4' \eps^{d_2/(d_3-d_2)}.
\end{align*}
Therefore,
by \cite[Theorem 6.3.3]{MR1}
(with $T=B(x_0,1)$, $d_X=d_1$ and $\mu$ being $\frac{1}{\mu(B(x_0,1))}\mu$), we have the almost sure continuity of the mean zero Gaussian process $\{G_1(x):x\in B(x_0,1)\}$ with covariance $u^1(\cdot,\cdot)$,
and so
by \cite[Theorem 9.4.1]{MR1},
$\{l(x,t): x\in B(x_0,1), t\ge 0\}$ is jointly continuous almost surely. Since this is satisfied for any $x_0\in M$, $\{l(x,t): x\in M, t\ge 0\}$ is jointly continuous almost surely.

Since we
will use a scaling argument in the remainder of the proof,
we prepare a scaled distance and a scaled measure.
Below, without loss of generality, we assume $\phi(1)=1$.
For each $\delta>0$, define a metric $d_\dlt$ and a measure $\mu_{\dlt}$ on $M$ by
\begin{equation}\label{e:dmu}\begin{split}
d_\dlt(x,y):=&\delta^{-1} d(x,y),\,~~\quad~~\forall x,y\in M,\\
\mu_{\dlt}(J):=&V(\delta)^{-1}\mu (J),\,~~\quad~~\forall J\subset {\mathcal B} (M).
\end{split}\end{equation} For $\delta>0$, let $(M,d_\dlt,\mu_{\dlt})$ be the scaled metric measure space defined by \eqref{e:dmu}, and
$X^{(\delta)}:=
\{ X_{\phi(\delta) t}: t \ge 0\}$ be the
scaled process in $(M,d_\dlt,\mu_{\dlt})$. We also let
$$
V_{(\delta)}(r)=V(\delta r)/V(\delta),  \quad \phi_{(\delta)}(r)=\phi(\delta r)/\phi(\delta)$$
and $$B_{d_\dlt}(x,r)=\{x \in M : d_\dlt (x,y) <r \}.$$
Then, $\mu_{\dlt}(B_{d_\dlt}(x,r)) \asymp V_{(\delta)}(r) $ uniformly on $\delta, r>0$ and $x\in M$,
\begin{align}\label{e:sn1}
  c_1 \Big(\frac Rr\Big)^{d_1} \leq \frac{V_{(\delta)}
(R)}{V_{(\delta)} (r)} \ \leq \ c_2 \Big(\frac Rr\Big)^{d_2}~\quad \hbox{ for
every }~\delta>0,  0<r<R<\infty,
\end{align}
and
\begin{align}\label{e:sn2}
c_3\Big(\frac{R}{r}\Big)^{d_3}\le \frac{\phi_{(\delta)}(R)}{\phi_{(\delta)}(r)}\le c_4\Big(\frac{R}{r}\Big)^{d_4}\quad\hbox{ for
every }~\delta>0,  0<r<R<\infty.
\end{align}
In particular, if $(M,d,\mu)$ is an $\alpha$-set, i.e.\ satisfies \eqref{eq:d-set},
then it is easy to see that
$(M,d_\dlt,\mu_{\dlt})$ with $V(r)=r^\alpha$
 is also an $\alpha$-set, and $\mu_{\dlt}$
satisfies \eqref{eq:d-set} with the same constants $c_1,c_2>0$.

Note that the transition density function $p^{(\delta)}(t, x, y)$ of $X^{(\delta)}$ with respect to the
measure $\mu_{(\delta)}$
is related to that of $X$ by the formula
$$p^{(\delta)}(t, x, y)= V(\delta)p(\phi(\delta)t, x, y)$$ for all $t>0$ and $x,y\in M$.
Thus, from Assumptions \ref{assmp1}  we have that
all $x, y\in M$ and $t, \delta \in (0,\infty)$,
\begin{align*}
&p^{(\delta)}(t,x,y)\le C_1\bigg( \frac{1}{V_{(\delta)}(\phi_{(\delta)}^{-1}(t))} \wedge \frac{t}{V_{(\delta)}(d_\dlt(x,y))\phi_{(\delta)}(d_\dlt(x,y))}  \bigg),
\\
&C_2\bigg( \frac{1}{V_{(\delta)}(\phi_{(\delta)}^{-1}(t))} \wedge \frac{t}{V_{(\delta)}(d_\dlt(x,y))\phi_{(\delta)}(d_\dlt(x,y))}  \bigg)
\le p^{(\delta)}(t,x,y).
\end{align*}
Let
$l^{(\delta)}(x,t)$ be its local time
with respect to the measure $\mu_{(\delta)}$, which exists by Proposition \ref{existence}, \eqref{e:sn1}, \eqref{e:sn2} and the assumption $d_2<d_3$. Let $\Pp^\cdot_{(\delta)}$ be its probability space.

In the following, set $\delta'=\delta^{-1}$.
Then, from \eqref{e:ltf}
we see that  $ (V(\delta') /\phi(\delta')) l(y,\phi(\delta') t)$ under
$\Pp^{x}$ corresponds to $l^{(\delta')}(y,t)$
under $\Pp^x_{(\delta')}$.
Thus, choosing $\delta=(1/\phi^{-1}(u))\wedge L^{-1}$,
we have
\begin{equation}\label{eq:hoebc}\begin{split}&\Pp^z\Big(\sup_{d(x,y)\le L}\sup_{0\le t\le u}|l(x,t)-l(y,t)|\ge A\Big)\\
&=
\Pp^{z}\Big(
\sup_{d(x,y)\le L}\sup_{0\le t\le u/\phi(\delta')}V(\delta') /\phi(\delta')\\
&\qquad\qquad\qquad~~ ~~~|l(x,\phi(\delta')t)
-l(y,\phi(\delta')t)|\ge AV(\delta') /\phi(\delta')\Big)\\
&\le
\Pp_{\dltp}^{z}\Big(
\sup_{d_\dltp(x, y)\le \delta L}\sup_{0\le t\le u/\phi(\delta')}|l^{\dltp}(x,t)
-l^{\dltp}(y,t)|\ge AV(\delta') /\phi(\delta')\Big)\\
&\le
\Pp_{\dltp}^{z}\Big(
\sup_{d_\dltp(x, y)\le \delta L}\sup_{0\le t\le 1}|l^{\dltp}(x,t)
-l^{\dltp}(y,t)|\ge AV(\delta')/ \phi(\delta')\Big).
\end{split} \end{equation}
Set $U(r)=\sqrt{\phi(r)/V(r)}$ and $H=B_{d_\dltp}(x_0,1/2)$ for some $x_0\in M$, and define
{\small
\begin{align*}
\Gamma_{\delta'}(H):=\iint_{H\times H}\Big(\exp\Big(c_*\frac{\sup_{0\le t\le 1}|l^{\dltp}(x,t)
-l^{\dltp}(y,t)|}{U(d_\dltp(x,y))}\Big)-1\Big)\,\mu_\dltp(dx)\,\mu_\dltp(dy),\\
F_{\delta'}:=\iint_{d_\dltp(x,y)\le 1}\Big(\exp\Big(c_*\frac{\sup_{0\le t\le 1}|l^{\dltp}(x,t)
-l^{\dltp}(y,t)|}{U(d_\dltp(x,y))}\Big)-1\Big)\,\mu_\dltp(dx)\,\mu_\dltp(dy),
\end{align*}}
for small constant $c_*>0$.
Clearly $\Gamma_{\delta'}(H)\le F_{\delta'}$ and, by \eqref{eqn:univd*} and \eqref{assum-2},
 \begin{equation}\label{assum-pp}c_{L}
 \Big(\frac{R}{r}\Big)^{(d_3-d_2)/2}\le \frac{U(R)}{U(r)}\le c_{U}
 \Big(\frac{R}{r}\Big)^{(d_4-d_1)/2}\end{equation} holds for all $0<r\le R$ and some positive constants $c_{L}, c_{U}$.
We will prove in the end of this proof that
$\Ee_{\dltp}^{z}[F_{\delta'}]$ is uniformly bounded (with respect to $\delta$) so that $\Gamma_{\delta'}(H)\le F_{\delta'}<\infty$.
Assuming this fact for the moment, we can apply Lemma \ref{thm:Garlw} with
$\Psi(x)=e^{c_*x}-1$ and $q(u)=U(u)$, and deduce
\begin{eqnarray*}
|l^{\dltp}(x,t)-l^{\dltp}(y,t)|\le c_0\int_0^{d_\dltp(x,y)}\log(c_1\Gamma_{\delta'} (H)V_{(\delta')}(u)^{-2}+1)\,
\frac{U(u)du}{u}
\end{eqnarray*}
for  $\mu_{(\delta)}$-almost all $x,y\in B_{d_\dltp}(x_0,1/16)$ and $t\le 1$,
and $c_0,c_1$ are independent of $x_0$.
Due to \eqref{e:sn1} and \eqref{assum-pp}, as stated in Lemma \ref{thm:Garlw}
  the above estimate holds for $l^{\dltp}(y,t)$ under $\Pp^{z}_{\dltp}$ uniformly (i.e.
with the same constants $c_0,c_1>0$ for all $\delta>0$).
By \eqref{assum-pp} again, there exist constants $c_2, c_3>0$ independent of $\delta$ such that for
$\mu_{(\delta)}$-almost all  $x,y\in M$
with $d_\dltp(x,y)\le \delta L$ and $t\le 1$,
\begin{equation}\label{e:lholcon}\begin{split}
|l^{\dltp}(x,t)-l^{\dltp}(y,t)|\le &c_0\int_0^{\delta L}\log(c_1F_{\delta'} V_{(\delta')}(u)^{-2}+1)\,
\frac{U(u)du}{u}
\\
\le& c_2 U(\delta L)\big(\log(1+c_3F_{\delta'} V_{(\delta')}(\delta L)^{-2})\big).\end{split}
\end{equation}
Indeed,
by \eqref{e:sn1} and  \eqref{assum-pp},
\begin{align*}
&\int_0^{\delta L}\log(c_1F_{\delta'} V_{(\delta')}(u)^{-2}+1)\,
\frac{U(u)du}{u}
\\
&\le \sum_{k=0}^\infty \big(\log(1+c_1F_{\delta'} V_{(\delta')}(\delta L/2^{k+1})^{-2})\big)
U(\delta L/2^{k})\\
&\le  c_2'\big(\log(1+c_3F_{\delta'} V_{(\delta')}(\delta L)^{-2})\big)U(\delta L)
\sum_{k=0}^\infty 2^{-k(d_3-d_2)/2}\\
&\le c_2 U(\delta L)\big(\log(1+c_3F_{\delta'} V_{(\delta')}(\delta L)^{-2})\big).
\end{align*}
Plugging this into \eqref{eq:hoebc}, we have
\begin{align*}
&\Pp^z\Big(\sup_{d(x,y)\le L}\sup_{0\le t\le u}|l(x,t)-l(y,t)|\ge A\Big)\\
&\le\Pp_{\dltp}^{z}\Big(
c_2 U(\delta L)\log(1+c_3F_{\delta'} V_{(\delta')}(\delta L)^{-2})>AV(\delta')/ \phi(\delta')\Big)\\
&=\Pp_{\dltp}^{z}\Big(\log(1+c_3F_{\delta'} V_{(\delta')}(\delta L)^{-2})\ge c_2^{-1}AV(\delta')/ (U(\delta L)\phi(\delta'))  \Big)\\
&\le e^{-c_2^{-1}AV(\delta')/ (U(\delta L)\phi(\delta'))}\Big(1+c_3\Ee_{\dltp}^{z}[F_{\delta'}]/V_{(\delta')}(\delta L)^{2}\Big)\\
&\le\frac{c_4}{V_{(\delta')}(\delta L)^{2}}e^{-c_2^{-1}AV(\delta')/ (U(\delta L)\phi(\delta'))}\Big(1+\Ee_{\dltp}^{z}[F_{\delta'}]\Big)\\
&= \frac{c_4V(\phi^{-1}(u) \vee L)^2}{V(L)^{2}}
e^{-c_2^{-1}AV(\phi^{-1}(u)\vee L)/ (U((L/\phi^{-1}(u)) \wedge 1)
\phi(\phi^{-1}(u)\vee L))}\Big(1+\Ee_{\dltp}^{z}[F_{\delta'}]\Big) ,
\end{align*}
where we used Chebyshev's inequality in the second inequality, the fact that
$\delta L\le 1$ (so that $V_{(\delta')}(\delta L)\le 1$)
in the third inequality and put
$\delta= (1/\phi^{-1}(u)) \wedge L^{-1}$ in the last equality.

Finally, we will check the integrability of $F_{\delta'} $.
Using \eqref{eq:ltex3} for $l^{\dltp}(y,t)$ under $\Pp^{z}_{\dltp}$
(note that \eqref{eq:ltex3} holds uniformly, i.e.\ with the same constant $c_5>0$ for all $\delta'>0$), we have
\begin{equation*}
\Pp_{\dltp}^{z}\Big(\sup_{0\le t\le 1}|l^{\dltp}(x,t)
-l^{\dltp}(y,t)|\ge k U(d_\dltp(x,y))\Big)
\le 2e^{1-c_5k}.\end{equation*}
Let $c_*=c_5/2$, and
\begin{align*}
I_\dltp(x,y,s)
=\exp\Big(c_*\frac{\sup_{0\le t\le s}|l^{\dltp}(x,t)
-l^{\dltp}(y,t)|}{U(d_\dltp(x,y))}\Big).
\end{align*}
Thus,  we have
\begin{align*}
\Ee^{z}_{\dltp}&[I_\dltp(x,y,1)]\\
\le &\sum_{k=0}^\infty e^{c_*(k+1)}\Pp_{\dltp}^{z}\left(k\le \frac{\sup_{0\le t\le 1}|l^{\dltp}(x,t)
-l^{\dltp}(y,t)|}{U(d_\dltp(x,y))}\le k+1\right)\\
\le &
2e^{1+c_*}\sum_{k=0}^\infty e^{-c_5k/2}=:K<\infty.
\end{align*}
Note that this value is uniformly bounded for all $\delta'>0$.
Take an open covering
\[
\big\{d_\dltp(x,y)\le 1\big\}\subset \cup_{i} (B_\dltp(x_i, 2) \times B_\dltp(x_i, 2))
\]
such that each point in $\{d_\dltp(x,y)\le 1\}$ is covered
by at most a (uniformly) finite number
of $\{B_\dltp(x_i, 2) \times B_\dltp(x_i, 2)\}_i$, say $C_0$. Using the doubling property of the volume and
the assumption that balls are relatively compact, such a covering is possible.
For each $x,y$ with $d_\dltp(x,y)\le 1$,
{\small \begin{align*}
\Ee^{z}_{\dltp}[I_\dltp(x,y,1)-1]&=\Ee^{z}_{\dltp}\bigg[1_{\{\sigma_{B_\dltp(x_i, 2)}\le 1\}}
\Ee^{X^{\dltp}_{\sigma_{B_\dltp(x_i, 2)}}}\Big[I_\dltp(x,y,1-\sigma_{B_\dltp(x_i, 2)})-1\Big]\bigg]\\
&\le (K-1) \Pp^{z}_{\dltp}(\sigma_{B_\dltp(x_i, 2)}\le 1).
\end{align*}
}
So
\begin{align*}
\Ee_\dltp^{z}[F_{\delta'}]
&=\iint_{d_\dltp(x,y)\le 1}\Ee^{z}_{\dltp}[I_\dltp(x,y,1)-1]\,d\mu_\dltp(x)\,d\mu_\dltp(y)\\
&\le  c_6(K-1) \sum_i \Pp^{z}_{\dltp}(\sigma_{B_\dltp(x_i, 2)}\le 1).
\end{align*}
Here we note that $\mu_\dltp (B_\dltp(x_i, 2))\le c_6' V(2)$, i.e.\ $\mu_\dltp (B_\dltp(x_i, 2))$ is uniformly bounded.  Noting that
\begin{align*}
& \Ee_{\dltp}^{z}\left[\int_{B_\dltp(x_i,4)}l^\dltp(y,4)\,\mu_\dltp (dy)\right]
=
\Ee_{\dltp}^{z}\left[\int_0^41_{B_\dltp(x_i,4)}(X^\dltp_s)\,ds\right]\\
&\ge
\Ee_{\dltp}^{z}\left[\int_{\sigma_{B_\dltp(x_i, 2)}}^{3+\sigma_{B_\dltp(x_i, 2)}}1_{B_\dltp(x_i,4)}(X^\dltp_s)\,ds:\sigma_{B_\dltp(x_i, 2)}\le 1\right]\\
&\ge \Ee_{\dltp}^{z}\left[
\Ee_{\dltp}^{X^\dltp_{\sigma_{B_\dltp(x_i, 2)}}}\Big[{\int_{0}^{3}1_{B_\dltp(x_i,4)}(X^\dltp_s)\,ds\Big]
1_{\{\sigma_{B_\dltp(x_i, 2)}\le 1\}}}\right]\\
&\ge  c_7\Pp^{z}_{\dltp}(\sigma_{B_\dltp(x_i, 2)}\le 1),
\end{align*}
where the last inequality is due to the fact that
$$\Ee_{\dltp}^{X^\dltp_{\sigma_{B_\dltp(x_i, 2)}}}\left[\int_{0}^{3}1_{B_\dltp(x_i,4)}(X^\dltp_s)\,ds\right]$$
is uniformly bounded from below.
Indeed, since $\phi_{(\delta')}(1)=1$ for all $\delta'>0$,
using Proposition \ref{w-ndle} for the scaled process and the semigroup property for the Dirichlet heart kernel, we have
\begin{align*}
&\inf_{w\in B_\dltp(x_i,2)}\Ee_{\dltp}^{w}\bigg[\int_{0}^ 3 1_{B_\dltp(x_i,4)}(X^\dltp_s)\,ds\bigg]
\ge 3
\inf_{w\in B_\dltp(x_i,2)}\Pp_{\dltp}^{w}(\tau_{B_\dltp(x_i,4)}\ge 3)\\
&= 3\inf_{w\in B_\dltp(x_i,4)}\int_{B_\dltp(x_i,2)}
p^{(\delta), B_\dltp(x_i,4)}(3,w,y)
\,\mu_\dltp (dy)\ge c_8.
\end{align*}
We thus obtain
\begin{align*}
\sum_i \Pp^{z}_{\dltp}(\sigma_{B_\dltp(x_i, 2)}\le 1)&\le c_9
\sum_i\Ee_{\dltp}^{z}\left[\int_{B_\dltp(x_i,4)}l^\dltp(y,4)\,\mu_\dltp (dy)\right]\\
&\le  c_{10}
\Ee_{\dltp}^{z}\left[\int_{M^\dltp}l^\dltp(y,4)\,\mu_\dltp (dy)\right]=
4c_{10},
\end{align*}
so we conclude $\Ee_{\dltp}^{z}[F_{\delta'}]$ is uniformly bounded.
\end{proof}
\begin{remark}\label{thm:FSTmis}
In lines 8 and 12 of \cite[p.\ 526]{fst}, $(N/(1-c))^{n(t)}$ should be changed to
$N^{n(t)\rho}/(1-c)^{n(t)\rho/2}$. Because of the typos, in the statement
of \cite[Lemma 5.5]{fst}, $\exp\big(-c_{55}ta^\rho\delta^{-\rho\theta/2}\big)$ should be changed to
$\exp\big(-c_{55}t^{(1+d_s/2)\rho/2}a^\rho\delta^{-\rho\theta/2}\big)$.
\end{remark}

\subsection{
Laws of the iterated logarithm for the maximum of local times and
ranges of processes} In the subsection, we always assume that
Assumption \ref{assmp3} is satisfied. In particular, according to
Proposition \ref{thm:estlochold}, the joint continuous version of
the local time of the process $X$, which is denoted by  $l(x,t)$ as
before,  exists for all $x\in M$. Denote by
  $$L^*(t)=\sup_{x \in M} l(x,t),\quad t>0.$$
 We will establish two LILs for $L^*(t)$.
\begin{remark} Even for one-dimensional L\'{e}vy process, some mild assumptions like Assumption \ref{assmp3} on characteristic exponent (also called symbol) are required to establish LILs of associated local times, see \cite{Wee}.
\end{remark}

\ \

First, we have the following LIL for $L^*(t)$.

\begin{theorem} \label{local-1}  Under Assumption $\ref{assmp3}$,
there exists a constant $c_0\in (0,\infty)$ such that
$$\limsup_{t\to\infty} \frac{L^*(t)}{t/V(\phi^{-1}(t/\log\log t))}= c_0,~~~\quad\,
\Pp^x\mbox{-a.e. } \omega,~~\forall x\in M.$$ \end{theorem}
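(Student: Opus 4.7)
Write $f(t):=t/V(\phi^{-1}(t/\log\log t))$. The plan is to establish matching bounds $0<c_-\le\limsup_{t\to\infty}L^*(t)/f(t)\le c_+<\infty$ almost surely, and then invoke the zero-one law for tail events (Theorem \ref{t:01tail}) to upgrade this to $\limsup L^*(t)/f(t)=c_0$ a.s.\ for some deterministic $c_0\in(0,\infty)$.

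\textbf{Lower bound.} The key observation is the occupation density formula \eqref{e:ltf}: if $X$ stays inside $B(x_0,r)$ throughout a time interval of length $T$, then
\[
T=\int_{B(x_0,r)}l(y,T)\,\mu(dy)\le \mu(B(x_0,r))\cdot\sup_y l(y,T),
\]
so $L^*(T)\ge T/(C_*V(r))$ by \eqref{e:BV}. Choose $r$ so that $\phi(r)\asymp T/(K\log\log T)$; the scaling \eqref{assum-qq} then yields $T/V(r)\gtrsim K^{d_1/d_4}f(T)$, while Proposition \ref{lemma-inf} gives $\Pp^{x_0}(\sup_{s\le T}d(X_s,x_0)\le r)\ge (\log T)^{-\gamma K}$ with $\gamma=|\log a^*_1|/c_0$. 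To pass to $\limsup$, take a geometric subsequence $t_n=q^n$ and set $A_n:=\{\sup_{t_n\le s\le t_{n+1}}d(X_s,X_{t_n})\le r_n\}$, with $r_n$ tuned to $T=t_{n+1}-t_n$ as above. The strong Markov property yields $\Pp^x(A_n\mid\mathscr{F}_{t_n})\ge c(\log t_{n+1})^{-\gamma K}$; choosing $K$ small enough that $\gamma K<1$ makes $\sum\Pp^x(A_n\mid\mathscr{F}_{t_n})=\infty$, so L\'evy's conditional Borel-Cantelli lemma gives $A_n$ infinitely often a.s. The same occupation argument applied to the increment on $A_n$ yields $L^*(t_{n+1})\ge \sup_y(l(y,t_{n+1})-l(y,t_n))\ge c_-f(t_{n+1})$.

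\textbf{Upper bound.} Two ingredients are required. (i) A pointwise tail bound
\[
\Pp^x(l(y,t)\ge Kf(t))\le C(\log t)^{-cK},\qquad K\ \text{large},
\]
uniformly in $y$. This is obtained from Proposition \ref{kac} by extracting exponential moments of $l(y,\zeta_\lambda)$ under the process subordinated to an independent $\mathrm{Exp}(\lambda)$ time with $\lambda\asymp(\log\log t)/t$ (using Proposition \ref{geenestdf}(i) and \eqref{assum-3} to compute $u^\lambda(y,y)\asymp f(t)/\log\log t$), and then undoing the killing at the cost of the factor $e^{\lambda t}=(\log t)^{O(1)}$. (ii) A covering argument: Theorem \ref{sup-1} ensures that with probability $1-o((\log t)^{-2})$ the range $X([0,t])$ sits inside $B(x,R_t)$ for $R_t:=\phi^{-1}(t(\log\log t)^2)$, and this ball is covered by $N_t\asymp V(R_t)/V(r_t)$ balls of radius $r_t$, chosen polylogarithmically small in $t$ so that Proposition \ref{thm:estlochold} forces the oscillation $\sup_i\sup_{y\in B(x_i,r_t)}|l(y,t)-l(x_i,t)|\le f(t)/2$ outside an event of probability $o((\log t)^{-2})$; the positivity of the H\"older exponent here relies crucially on $d_3>d_2$ (Assumption \ref{assmp3}). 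A union bound over the $N_t$ centers combined with (i) gives $\Pp^x(L^*(t)\ge Kf(t))\le(\log t)^{-c'K}$ for $K$ large enough. Summing along $t_n=q^n$, applying Borel-Cantelli, and using the monotonicity of $L^*$ together with the doubling of $f$ that follows from \eqref{assum-qq} to handle intermediate times, one obtains $\limsup_t L^*(t)/f(t)\le K$ a.s.

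\textbf{Conclusion and main obstacle.} Since $\{\limsup L^*(t)/f(t)\le a\}$ is a tail event, Theorem \ref{t:01tail} forces this limsup to equal a deterministic constant $c_0$; the two bounds confine $c_0$ to $(0,\infty)$. The most delicate step is the pointwise tail bound (i): a direct Chebyshev application from Proposition \ref{kac} produces only stretched-exponential decay of the form $\exp(-cK(\log\log t)^{d_1/d_4})$, which fails to be summable along any geometric subsequence whenever $d_1<d_4$. Upgrading this to polynomial-in-$\log t$ decay requires exploiting the exactness of the moment formula via $\lambda$-subordination, and the bookkeeping challenge is to simultaneously balance the three scales, namely the killing rate $\lambda\asymp(\log\log t)/t$, the covering radius $r_t$, and the tail exponent $K$, so that all three error contributions are summable at once.
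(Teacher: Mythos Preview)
Your approach is in the right spirit and both halves can be made to work, but it is considerably more involved than the paper's argument, and the ``main obstacle'' you identify is largely self-inflicted.

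For the \emph{upper bound}, the paper does not aim for a tail estimate on $L^*(t)/f(t)$ directly. Instead, Proposition~\ref{p-local} establishes the scale-free bound
\[
\sup_{t>0,\,x\in M}\Pp^x\!\bigl(L^*(t)\ge b\,t/V(\phi^{-1}(t))\bigr)\le c_1 b^{-c_2},
\]
with $\varphi(t)=t/V(\phi^{-1}(t))$ (no iterated logarithm), and then feeds this into the abstract LIL machine of Proposition~\ref{U-LIL}, which automatically inserts the $\log\log t$ and yields $\limsup L^*(t)/f(t)<\infty$ via the identity $\varphi(t/\log\log t)\log\log t=f(t)$. The proof of Proposition~\ref{p-local} uses the same two ingredients you list---the pointwise exponential tail from Kac's formula (Lemma~\ref{local-lemma-1}) and the oscillation bound (Proposition~\ref{thm:estlochold})---but needs no covering or union bound: one simply splits
\[
\Pp^x\bigl(\sup_{d(z,x)\le g(b)\phi^{-1}(t)}l(z,t)\ge 2b\varphi(t)\bigr)
+\Pp^x\bigl(\tau_{B(x,g(b)\phi^{-1}(t))}\le t\bigr)
\]
and optimizes over the single parameter $g(b)$. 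Your stretched-exponential obstacle is thus an artifact of targeting $f(t)$ rather than $\varphi(t)$; the $\lambda$-subordination trick you propose does patch it, but is unnecessary. Incidentally, your confinement radius $R_t=\phi^{-1}(t(\log\log t)^2)$ gives exit probability $\asymp(\log\log t)^{-2}$ by~\eqref{exit}, which is \emph{not} summable along $t_n=q^n$; you would need $R_t=\phi^{-1}(t(\log t)^2)$ or similar.

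For the \emph{lower bound}, the paper simply quotes Theorem~\ref{inf2} (the liminf LIL for $\sup_{s\le t}d(X_s,x)$, already proved) to obtain a random sequence $t_n\to\infty$ along which the process is confined to $B(x,c\,\phi^{-1}(t_n/\log\log t_n))$, and then uses the occupation identity $t\le L^*(t)R(t)$ together with $R(t)\le c'V(\sup_{s\le t}d(X_s,x))$. Your direct conditional Borel--Cantelli argument based on Proposition~\ref{lemma-inf} amounts to re-proving a piece of Theorem~\ref{inf2} on the spot; it is correct, but the paper's route is shorter since that theorem is already available. Both approaches then conclude via Theorem~\ref{t:01tail}.
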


We need the following tail probability estimate for the local time $l(x,t)$.

\begin{lemma}\label{local-lemma-1} Under Assumption $\ref{assmp3}$, there exists a constant $c_1>0$ such that for all $x,y\in M$ and $t, b>0$,
$$\Pp^y\left(l(x,t)\ge  \frac{b t}{V(\phi^{-1}(t))} \right)\le 2e^{-c_1b}.$$ \end{lemma}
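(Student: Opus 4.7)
The plan is to obtain the tail estimate by a standard moment generating function argument, using Kac's moment formula (Proposition \ref{kac}) as the main input.

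First I would derive a one-sided bound on the mean of the local time. From \eqref{res}, integration by parts, and uniqueness of Laplace transforms, one gets $\Ee^y l(x,t)=\int_0^t p(s,y,x)\,ds$ for all $y,x\in M$. Applying the on-diagonal upper bound in \eqref{a-two-sidedup} and then \eqref{ass-v-phi} from Lemma \ref{regular-1} yields
\[
\Ee^y l(x,t)\le c_2\int_0^t \frac{1}{V(\phi^{-1}(s))}\,ds\le c_3\,\frac{t}{V(\phi^{-1}(t))}=:c_3 m(t),
\]
uniformly in $x,y\in M$. In particular the same bound applies with $y=x$.

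Next I would feed this into Kac's moment formula. By the second assertion of Proposition \ref{kac},
\[
\Ee^y (l(x,t))^n=n!\,\Ee^y l(x,t)\big(\Ee^x l(x,t)\big)^{n-1}\le n!\,(c_3 m(t))^{n},
\]
for every integer $n\ge 1$. Summing the Taylor series, for any $\lambda>0$ with $c_3\lambda m(t)<1$,
\[
\Ee^y\exp\!\big(\lambda\,l(x,t)\big)=\sum_{n=0}^\infty\frac{\lambda^n}{n!}\Ee^y(l(x,t))^n\le \sum_{n=0}^\infty (c_3\lambda m(t))^n=\frac{1}{1-c_3\lambda m(t)}.
\]
Choosing $\lambda=1/(2c_3 m(t))$ gives $\Ee^y\exp(\lambda l(x,t))\le 2$.

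Finally, an application of Markov's inequality finishes the job:
\[
\Pp^y\!\left(l(x,t)\ge b\,m(t)\right)\le e^{-\lambda b m(t)}\,\Ee^y\exp(\lambda l(x,t))\le 2 e^{-b/(2c_3)},
\]
which is the desired estimate with $c_1=1/(2c_3)$. There is no real obstacle here: the only nontrivial input beyond Proposition \ref{kac} is the sharp bound $\Ee^y l(x,t)\le c\,t/V(\phi^{-1}(t))$, which is immediate once one recognises the mean of $l(x,t)$ as a time-integral of the heat kernel and applies \eqref{ass-v-phi}. The factorial cancellation in Kac's formula is precisely what converts polynomial moment bounds into an exponential tail with rate independent of $t$.
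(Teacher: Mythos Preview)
Your argument is correct and follows essentially the same route as the paper: bound the first moment of $l(x,t)$ by $c\,t/V(\phi^{-1}(t))$, feed this into Kac's moment formula to control all moments, sum to an exponential moment, and apply Markov's inequality. The only cosmetic difference is that the paper obtains the first-moment bound by approximating $l(x,t)$ as $\lim_{\varepsilon\to0}\mu(B(x,\varepsilon))^{-1}\int_0^t\I_{B(x,\varepsilon)}(X_s)\,ds$ and bounding the occupation probability, whereas you identify $\Ee^y l(x,t)=\int_0^t p(s,y,x)\,ds$ via \eqref{res} and Laplace inversion; both lead to the same estimate (note your ``on-diagonal'' really means the uniform bound $p(s,y,x)\le C_2/V(\phi^{-1}(s))$ from \eqref{a-two-sidedup}).
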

\begin{proof}
For any $\varepsilon>0$, by Assumption \ref{assmp1},
$$\Pp^y(d(X_s,x)\le \varepsilon)=\int_{B(x,\varepsilon)} p(s,y,z)\,\mu(dz)\le \frac{C_2}{V(\phi^{-1}(s))}\mu(B(x,\varepsilon)),$$ and so
$$\int_0^t  \Pp^y(d(X_s,x)\le \varepsilon)\,ds \le C_2\mu(B(x,\varepsilon)) \int_0^t\frac{1}{V(\phi^{-1}(s))}\,ds.$$ Combining this with the fact
$$l(x,t)= \lim_{\varepsilon\to0}\frac{1}{\mu(B(x,\varepsilon))}\int_0^t \I_{B(x,\varepsilon)}(X_s)\,ds, $$ we have \begin{equation}\label{one-m}\Ee^y (l(x,t))\le C_2\int_0^t\frac{1}{V(\phi^{-1}(s))}\,ds.\end{equation}
Furthermore, according to the estimate above and Proposition \ref{kac}, we find that
$$ \Ee^y \Big (l(x,t)^n\Big)\le n! \left(C_2\int_0^t\frac{1}{V(\phi^{-1}(s))}\,ds\right)^n,\quad n\ge 0,$$ which implies that
\begin{equation}\label{exp-m}\Ee^y\left(\exp\left(\frac{l(x,t)}{2C_2 \int_0^t \frac{1}{V(\phi^{-1}(s))}\,ds}\right)\right)\le 2.\end{equation} The desired assertion is a direct consequence of the inequality above, the Chebyshev inequality and \eqref{ass-v-phi}.
\end{proof}

\begin{remark}
Alternatively one can obtain the exponential integrability
\eqref{exp-m} directly from \eqref{one-m}, by applying
Khas'misnkii's lemma, e.g.\ see \cite[Lemma B.1.2]{Si}. \end{remark}

\begin{proposition}\label{p-local}
There are constants $c_1,c_2>0$ such that for $b\ge 1$,
$$
\sup_{t>0, x\in M}
\Pp^x\left(L^*(t)\ge  \frac{bt}{V(\phi^{-1}(t))}\right)\le c_1 b^{-c_2}. $$
 \end{proposition}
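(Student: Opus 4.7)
The plan is a standard localization--covering--continuity argument. First, I would localize the process to a large ball around the starting point $x$ so that the local time is supported there with high probability. Specifically, by (the proof of) \eqref{exit}, which under Assumption \ref{assmp1} yields $\Pp^x(\tau_{B(x,r)} \le t) \le c t/\phi(r)$ for all $t>0$, setting $R := \phi^{-1}(bt)$ gives
\[
\Pp^x(\tau_{B(x,R)} \le t) \le c t/\phi(R/2) \le c'/b.
\]
On the event $E_1 := \{\tau_{B(x,R)} > t\}$, the occupation formula \eqref{e:ltf} shows $l(y,t) = 0$ for $y \notin B(x,R)$, so $L^*(t) = \sup_{y \in B(x,R)} l(y,t)$.

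Next, cover $B(x,R)$ by balls $B(y_i, r_0)$, $i = 1,\ldots,N$, with
\[
r_0 := \phi^{-1}(t), \qquad N \le c\, V(R)/V(r_0) \le c'\, b^{d_2/d_3},
\]
where the last inequality uses volume doubling \eqref{eqn:univd*} together with \eqref{assum-qq}. By Lemma \ref{local-lemma-1} and the union bound,
\[
\Pp^x\!\left(\max_i l(y_i,t) \ge \tfrac{b}{2}\cdot\tfrac{t}{V(\phi^{-1}(t))}\right) \le 2N\,e^{-c_1 b/2} \le c\, b^{d_2/d_3}\, e^{-c_1 b/2}.
\]
For the oscillation between each $y_i$ and nearby points, apply Proposition \ref{thm:estlochold} with $L = r_0 = \phi^{-1}(t)$, $u = t$ and $A = bt/(2V(\phi^{-1}(t)))$. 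Since then $\phi^{-1}(u)\vee L = \phi^{-1}(t)$, $(L/\phi^{-1}(u))\wedge 1 = 1$, and $\phi(\phi^{-1}(t)) = t$, the prefactor $V(\phi^{-1}(u)\vee L)^2/V(L)^2$ reduces to a constant and the exponent simplifies to $-c_3 b$, giving
\[
\Pp^x\!\left(\sup_{d(y,y')\le r_0}\sup_{0\le s\le t}|l(y,s)-l(y',s)| \ge \tfrac{b}{2}\cdot\tfrac{t}{V(\phi^{-1}(t))}\right) \le c\, e^{-c_3 b}.
\]

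Combining the three estimates on $E_1$: for every $y \in B(x,R)$ there is some $y_i$ with $d(y,y_i)\le r_0$, so $l(y,t) \le l(y_i,t) + |l(y,t)-l(y_i,t)|$, and consequently
\[
\Pp^x\!\left(L^*(t) \ge \tfrac{bt}{V(\phi^{-1}(t))}\right) \le \tfrac{c}{b} + c\, b^{d_2/d_3} e^{-c_1 b/2} + c\, e^{-c_3 b} \le c_1'\, b^{-1} \quad (b\ge 1),
\]
so the proposition follows with $c_2 = 1$. (In fact, replacing $R = \phi^{-1}(bt)$ by $R = \phi^{-1}(b^K t)$ for arbitrary $K>0$ yields any desired exponent $c_2 = K$; the localization step is the only bottleneck for the exponent.)

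The main technical input is Proposition \ref{thm:estlochold}; the key point in assembling it is the choice $r_0 = \phi^{-1}(t)$, the natural spatial scale associated to time $t$. This choice simultaneously makes $V(r_0) \asymp V(\phi^{-1}(t))$ (so the oscillation prefactor is bounded) and keeps the covering count $N$ polynomial in $b$, which removes any need to optimize $r_0$ as a function of $b$. Since Assumption \ref{assmp3} is in force, Proposition \ref{thm:estlochold} has already been established and Lemma \ref{local-lemma-1} provides the pointwise exponential tail, so only the balancing above is new.
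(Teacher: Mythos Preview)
Your argument is correct and in fact slightly cleaner than the paper's. Both proofs rest on the same three inputs --- the exit-time bound \eqref{exit}, the single-point tail Lemma~\ref{local-lemma-1}, and the oscillation estimate Proposition~\ref{thm:estlochold} --- but assemble them differently. The paper anchors everything at the single point $x$: it bounds $L^*(t)$ by $l(x,t)$ plus $\sup_{d(z,x)\le f(b)\phi^{-1}(t)}|l(z,t)-l(x,t)|$, and invokes Proposition~\ref{thm:estlochold} with the large radius $L=f(b)\phi^{-1}(t)$. This produces an exponent of the form $-c\,b/f(b)^{d_4-d_1}$, which must be balanced against the localization term $f(b)^{-d_3}$, leading to the choice $f(b)=b^{1/(2(d_4-d_1))}$ and a final power $c_2=d_3/(2(d_4-d_1))$. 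You instead take a net of anchors $\{y_i\}$ at the natural scale $r_0=\phi^{-1}(t)$; this lets you apply Proposition~\ref{thm:estlochold} with $L=\phi^{-1}(t)$, where the exponent is simply $-c\,b$ with no loss, and the price is a union bound over $N\le c\,b^{d_2/d_3}$ centers, which is harmless against the exponential. The localization step then controls the final exponent, giving $c_2=1$ (or any desired power, as you note). Your route avoids the optimization in $f$ entirely and makes clear that the polynomial bound comes solely from the exit estimate.
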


\begin{proof}Let $f$ be an increasing function such that $f(1)=1$ and $\lim_{r\to\infty}f(r)=\infty$.
By \eqref{exit}, the doubling property of $\phi$ and \eqref{assum-2}, we find that for any $x\in M$ and
$t>0$ and $b \ge 1$,
\begin{align*}
&\Pp^x\left(L^*(t)\ge \frac{2bt}{V(\phi^{-1}(t))} \right)\\
&\le  \Pp^x\left(\sup_{d(z,x)\le f(b) \phi^{-1}(t)}l(z,t)\ge \frac{2bt}{V(\phi^{-1}(t))} \right)
+\Pp^x v\left(\sup_{0<s\le t}d(X_s,x)\ge f(b) \phi^{-1}(t) \right)\\
&\le \Pp^x\left(\sup_{d(z,x)\le f(b) \phi^{-1}(t)} l(z,t)\ge \frac{2bt}{V(\phi^{-1}(t))}\right)
+ \frac{c_0t}{\phi (f(b)\phi^{-1}(t))}\\
&\le \Pp^x\left(\sup_{d(z,x)\le f(b) \phi^{-1}(t)} l(z,t)\ge \frac{2bt}{V(\phi^{-1}(t))}\right)+ c_1f(b)^{-d_3}
 \end{align*} for some constant $c_1>0$.

 On the one hand, by Lemma \ref{local-lemma-1}, there is a constant $c_2>0$ such that for all $x\in M$,
 $t>0$ and $b \ge1$
\begin{align*} \Pp^x&\left(\sup_{d(z,x)\le f(b) \phi^{-1}(t)}l(z,t) \ge \frac{2bt}{V(\phi^{-1}(t))} \right) \\
\le & \Pp^x\left(\sup_{d(z,x)\le f(b) \phi^{-1}(t)} |l(z,t)-l(x,t)|\ge \frac{bt}{V(\phi^{-1}(t))} \right)
+\Pp^x\left(l(x,t)\ge \frac{bt}{V(\phi^{-1}(t))}\right)\\
\le& \Pp^x\left(\sup_{d(z,x)\le f(b) \phi^{-1}(t)} |l(z,t)-l(x,t)|\ge \frac{bt}{V(\phi^{-1}(t))}\right)
+2e^{-c_2b}.
 \end{align*}
On the other hand, according to Proposition \ref{thm:estlochold}, there are constants $c_3, c_4>0$ such that for all $t>0$ and $b\ge 1$,
\begin{align*}   \Pp^x&\left(\sup_{d(z,x)\le f(b) \phi^{-1}(t)} |l(z,t)-l(x,t)|\ge  \frac{bt}{V(\phi^{-1}(t))} \right)\\
&\le c_3 \exp\left(-c_4 b\frac{t}{V(\phi^{-1}(t))} \frac{V(f(b)\phi^{-1}(t))}{\phi(f(b)\phi^{-1}(t))}\right)\\
&= c_3 \exp\left(-c_4 b\frac{V(f(b)\phi^{-1}(t))}{V(\phi^{-1}(t))} \frac{\phi(\phi^{-1}(t))}{\phi(f(b)\phi^{-1}(t))}\right)\\
& \le c_5 \exp\left(-c_6 b f(b)^{d_1} f(b)^{-d_4}\right)
= c_5 \exp\Big(-\frac{c_6b}{f(b)^{\theta}}\Big),
 \end{align*}
 where $\theta:=d_4-d_1>0$.

Combining with all the estimates above, we find that
$$\sup_{t>0}\Pp^x\left(L^*(t)\ge \frac{bt}{V(\phi^{-1}(t))}\right)\le c_7\bigg[ f(b)^{-d_3}+ e^{-c_2b}+ \exp\left[-\Big(\frac{c_6b}{f(b)^\theta}\Big)\right]\bigg].$$ The proof is finished by taking $f(r)= r^{1/(2\theta)}$ in the inequality above . \end{proof}

Now, we are ready to prove Theorem \ref{local-1}.
\begin{proof}[Proof of Theorem $\ref{local-1}$]
(i)\textbf{(Upper bound)}: According to Proposition \ref{p-local}, we find that $$
\sup_{t>0, x\in M} \Pp^x\left(L^*(t)\ge \frac{bt}{V(\phi^{-1}(t))}\right)\to 0,\quad b\to \infty.$$ Then, according to Proposition \ref{U-LIL} and the (stronger) doubling properties of $V$ and $\phi$, we know that
$$\limsup_{t\to\infty} \frac{L^*(t)}{ t/V(\phi^{-1}(t/\log\log t))}= \limsup_{t\to\infty} \frac{L^*(t)}{ \frac{(t/\log\log t)}{ V(\phi^{-1}(t/\log\log t))}( \log\log t)} \le c_0.$$

\noindent
(ii)\textbf{(Lower bound)}: Let $R(t)=\mu(X([0,t]))$ be the range of the process. By Theorem \ref{inf2}, there is a sequence $\{t_n\}$ such that $t_n\to \infty$ as $n\to\infty$, and
 $$\sup_{0\le s\le t_n} d(X_s, x)\le c_1\phi^{-1}\left(\frac{t_n}{\log\log t_n}\right).$$ Since $R(t)\le c_2V(\sup_{0\le s \le t} d(X_s,x))$,
 $$R(t_n)\le c_3V\left(\phi^{-1}\left(\frac{t_n}{\log\log t_n}\right)\right).$$ In particular,
 \begin{equation}\label{LILrnoinf}
 \liminf_{t\to\infty} \frac{R(t)}{V\left(\phi^{-1}\left({t}/{\log\log t}\right)\right)}\le c_3.
 \end{equation}
 By the fact that
 \begin{equation}\label{eq:Lrnrel}
 t=\int_{X([0,t])} l(x,t)\,\mu(dx)\le L^*(t)R(t),
 \end{equation}
 we get
 $$\limsup_{t\to\infty} \frac{L^*(t)}{ t/V\left(\phi^{-1}\left({t}/{\log\log t}\right)\right)}\ge \limsup_{t\to\infty} \frac{t}{R(t) t/V\left(\phi^{-1}\left({t}/{\log\log t}\right)\right) } \ge \frac{1}{c_3}.$$

 From those two inequalities above, we have proved the desired assertion by zero-one law for tail events (see Theorem \ref{t:01tail}).
\end{proof}

\bigskip

Next, we turn to
another LIL.
\begin{theorem}\label{LILinfL} Under Assumption $\ref{assmp3}$,
there exists a constant $c_0\in (0,\infty)$ such that
 $$\liminf_{t\to\infty} \frac{ L^*(t)}{ (t/\log\log t)/ V(\phi^{-1}(t/\log\log t))}
= c_0,~~~\quad\,
\Pp^x\mbox{-a.e. } \omega,~~\forall x\in M.$$ \end{theorem}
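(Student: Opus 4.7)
I will parallel the proof of Theorem \ref{local-1}, routing the analysis through the range $R(t) := \mu(X([0,t]))$ via the inequality $t \le L^*(t)R(t)$ from \eqref{eq:Lrnrel}. Write $f(t) := (t/\log\log t)/V(\phi^{-1}(t/\log\log t))$ for the target normalizer and set $g(t) := V(\phi^{-1}(t/\log\log t))\log\log t$, so that $f(t)g(t) = t$. Then $L^*(t)/f(t) \ge g(t)/R(t)$, and the lower bound of $\liminf L^*/f$ reduces to an upper bound on $\limsup R/g$. By the tail 0--1 law (Theorem \ref{t:01tail}) both one-sided limits are deterministic constants, independent of the starting point.

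\textbf{Lower bound of $\liminf L^*/f$.} I will show $\limsup_t R(t)/g(t) \le C_1$ a.s.\ by a block decomposition. Set $n := \lceil \log\log t\rceil$ and $\Delta := t/n$, and bound
\[
R(t) \;\le\; c\sum_{k=1}^{n} V\Bigl(\sup_{(k-1)\Delta \le s \le k\Delta} d(X_s, X_{(k-1)\Delta})\Bigr) \;=:\; c\sum_{k=1}^n Y_k.
\]
By the Markov property and the uniform exit-time estimate \eqref{exit}, the $Y_k$ are stochastically dominated by i.i.d.\ random variables with $\Ee Y_k \asymp V(\phi^{-1}(\Delta))$ and polynomially decaying tails; the finiteness of high moments follows from Assumption \ref{assmp3} through the integrability encoded in Lemma \ref{regular-1} and \eqref{assum-4}. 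A high-moment Chebyshev inequality plus Borel--Cantelli along $t_m = 2^m$, combined with the monotonicity of $R$ to interpolate between dyadic times, yields $R(t) \le C_1 nV(\phi^{-1}(\Delta)) = C_1 g(t)$ eventually a.s., whence $\liminf L^*(t)/f(t) \ge 1/C_1$.

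\textbf{Upper bound of $\liminf L^*/f$.} Along the same dyadic sequence $t_m = 2^m$ I construct an event $A_m$ of probability bounded below on which simultaneously $R(t_m) \ge c_2 g(t_m)$ and $L^*(t_m) \le C_2 f(t_m)$. The heat-kernel lower bound \eqref{a-two-sidedlow} and conservativeness (Proposition \ref{con}) give $\Pp^z(d(X_\Delta, z) \ge \phi^{-1}(\Delta)) \ge p_0 > 0$ uniformly in $z$, so by Markov independence across blocks at least a positive fraction of the $n$ blocks witness a displacement of size $\ge \phi^{-1}(\Delta)$. A refined event further forces each such displacement to land in a ball not visited by previous blocks: the previously traced set has $\mu$-measure $\lesssim kV(\phi^{-1}(\Delta))$ after $k$ blocks, and using the jump-kernel lower bound from Proposition \ref{jump} together with \eqref{assum-4} (which relies on $d_3 > d_2$), the block-$k$ endpoint lands in a fresh ball with probability bounded below uniformly in $k \le n$. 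On $A_m$, range contributions from distinct blocks therefore add, giving $R(t_m) \ge c_2 nV(\phi^{-1}(\Delta)) = c_2 g(t_m)$; simultaneously, the per-block tail estimate from Proposition \ref{p-local} applied within each block of length $\Delta$, together with the spatial separation of supports, shows $L^*(t_m) \le C_2\Delta/V(\phi^{-1}(\Delta)) = C_2 f(t_m)$. The generalized Borel--Cantelli lemma (Lemma \ref{bc}, exactly as used in the proof of Theorem \ref{sup-3}) handles the residual weak dependence across the $A_m$'s, and Theorem \ref{t:01tail} promotes the resulting positive-probability conclusion to probability one, giving $\liminf L^*(t)/f(t) \le C_2$.

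\textbf{Main obstacle.} The critical and most delicate step is the construction of the event $A_m$ above: ensuring that successive blocks explore essentially disjoint regions so that the ranges combine constructively \emph{and} the per-block local-time suprema are supported on spatially separated sets. Without the self-similarity assumption that underlies the earlier works \cite{fst, BK}, this cannot be handled by a scaling argument, so I instead rely on the pointwise jump-kernel lower bound of Proposition \ref{jump}, combined with the integrability \eqref{assum-4} (equivalently, $d_3 > d_2$ from Assumption \ref{assmp3}), to bound from below, uniformly in the history-dependent previously traced region, the probability that each block contributes a fresh jump. The matching of constants between the two-sided bounds is then automatic from Theorem \ref{t:01tail}, yielding the desired $c_0 \in (0,\infty)$.
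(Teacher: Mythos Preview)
Your high-level strategy---route through the range $R(t)$ via $t\le L^*(t)R(t)$ and use the tail 0--1 law---matches the paper's. But both halves of your argument have genuine gaps.

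\textbf{Lower bound.} Your block decomposition bounds $R(t)$ by a sum of $n\asymp\log\log t$ terms $Y_k$, and you then appeal to ``high-moment Chebyshev''. Under Assumption~\ref{assmp3} the tail of $Y_k$ decays only like $b^{-d_3/d_2}$ (from \eqref{exit} together with \eqref{eqn:univd*}, \eqref{assum-2}), so moments of order $\ge d_3/d_2$ need not exist; since $d_3/d_2$ may be arbitrarily close to $1$, you cannot in general get even a second moment, and the claimed concentration fails. The paper avoids this entirely: it observes $\Pp^x(R(t)\ge bV(\phi^{-1}(t)))\le c/\phi(V^{-1}(cb))\to 0$ uniformly and feeds this, together with the trivial subadditivity $R(t)-R(s)\le R(t-s)\circ\theta_s$, into the general LIL-upper-bound machine Proposition~\ref{U-LIL}. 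That proposition converts mere uniform tail smallness plus subadditivity into the $\limsup$ bound \eqref{eq:rnlisup} with no moment requirement.

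\textbf{Upper bound.} This is where your scheme breaks down more seriously. First, along the dyadic sequence $t_m=2^m$ the events $A_m$ you construct share half their time intervals; the pairwise near-independence hypothesis of Lemma~\ref{bc} is not available, and the lemma does not apply. Second, the good event ``$L^*(t)\le Cf(t)$'' does \emph{not} have probability bounded below: in the paper it is only shown to have probability $\gtrsim(\log t)^{-1/4}$, and this is what the argument actually needs. The paper's mechanism is quite different from yours: take $t_k=e^{k^2}$ so that $t_k/t_{k+1}\to 0$; split $L^*(t_{k+1})\le L^*(t_k)+\sup_x(l(x,t_{k+1})-l(x,t_k))$; kill the first term by Theorem~\ref{local-1} (this step genuinely requires the super-exponential gap and $d_3>d_2$); then apply the \emph{conditional} second Borel--Cantelli lemma to the increments given $\mathscr{F}_{t_k}$, which reduces the problem to a \emph{uniform-in-starting-point} lower bound $\inf_z\Pp^z(L^*(t)<Cf(t))\ge c(\log t)^{-1/4}$. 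To obtain that bound the paper does not rely on jump-kernel estimates to find ``fresh'' balls as you suggest; instead it invokes the purely topological chain of balls from Proposition~\ref{space-c}, and builds events $E_k$ that force $X$ to march along this chain (landing in $A_{2k}$ at time $ks$) while staying within $\rho\phi^{-1}(s)$ of its block starting point and keeping block-wise $L^*$ small via Proposition~\ref{p-local}. Disjointness of the regions $\{A_i:|i-j|\ge 2\}$ is what guarantees that block local times do not superimpose, and the product of $n_0\asymp\log\log t$ block probabilities each $\ge c_0/2$ gives the $(\log t)^{-1/4}$. Your proposal misses the super-exponential spacing, the splitting that uses Theorem~\ref{local-1}, and the chain construction; without these the argument does not close.
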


\begin{proof} (i)\textbf{(Lower bound)}: Let $R(t)$ be the range of the process. Then, by \eqref{exit},
\begin{align*}  \Pp^x(R(t)\ge r)\le &\Pp^x(\sup_{0\le s \le t} d(X_s, x)\ge V^{-1}(c_1r))\le \frac{c_2t}{\phi(V^{-1}(c_1r))}.
 \end{align*} According to the doubling properties of $V$ and $\phi$,
 $$\sup_{x\in M, t>0}\Pp^x(R(t)\ge b V(\phi^{-1}(t)))\to 0,\quad b\to \infty.$$
 This, along with Proposition \ref{U-LIL} and the doubling properties of $V$ and $\phi$ again, yields that
 \begin{equation}\label{eq:rnlisup}
 \limsup_{t\to\infty} \frac{ R(t)}{ V(\phi^{-1}(t/\log\log t))\log\log t}\le c_3.
 \end{equation}
Also due to
 \eqref{eq:Lrnrel}, we get that
 \begin{align*}&\liminf_{t\to\infty} \frac{ L^*(t)}{ (t/\log\log t)/ V(\phi^{-1}(t/\log\log t))}\\
 & \ge
 \liminf_{t\to\infty} \frac{ t}{R(t) (t/\log\log t)/ V(\phi^{-1}(t/\log\log t))} \ge \frac{1}{c_3}.\end{align*}

\noindent
(ii)\textbf{(Upper bound)}: Below, we turn to prove that
$$ \liminf_{t\to\infty} \frac{ L^*(t)}{ (t/\log\log t)/ V(\phi^{-1}(t/\log\log t))}\le c_4,$$ which along with the inequality above and zero-one law for tail events
(see Theorem \ref{t:01tail}) yields the required assertion.

 Let $t_k=e^{k^2}$. Then,
 \begin{align*} \liminf_{t\to\infty} &\frac{ L^*(t)}{ (t/\log\log t)/ V(\phi^{-1}(t/\log\log t))}\\
 \le &\limsup_{k\to \infty} \frac{ L^*(t_k)}{ (t_{k+1}/\log\log t_{k+1})/ V(\phi^{-1}(t_{k+1}/\log\log t_{k+1}))}\\
 &+\liminf_{k\to\infty}\sup_{x\in M} \frac{l(x, t_{k+1})-l(x,t_k)}{(t_{k+1}/\log\log t_{k+1})/ V(\phi^{-1}(t_{k+1}/\log\log t_{k+1}))}. \end{align*}
From Theorem \ref{local-1}, \eqref{assum-qq} and the assumption $d_3>d_2$,
we know that
 $$\limsup_{k\to \infty} \frac{ L^*(t_k)}{ (t_{k+1}/\log\log t_{k+1})/ V(\phi^{-1}(t_{k+1}/\log\log t_{k+1}))}=0.$$
   So, by the Markov property and the second Borel-Cantelli lemma, it suffices to prove that there is a constant $C>0$ such that for any $x\in M$,
 $$\sum_{k=1}^\infty \Pp^x\Big(\sup_{x\in M}({l(x, t_{k+1})-l(x,t_k)})< C\frac{t_{k+1}/\log\log t_{k+1}}{V(\phi^{-1}(t_{k+1}/\log\log t_{k+1}))}|\mathscr{F}_{t_{k}}\Big)=\infty.$$

 For this, we follow the proofs of \cite[Proposition 4.8]{BK} and \cite[Theorem 3.2]{Wee} but with some significant modifications.
Note that,  using Assumption \ref{assmp1}, we have that
there is a constant $c_0=c_0(d_3) \in (0,1)$
such that
for every $t>0$ and balls  $B_1$ and $B_2$ of radius $2 \phi^{-1}(t)$ with $B_1\cap B_{2}\ne \emptyset$,
 \begin{equation}\label{e:third}\begin{split}
\inf_{t>0, z\in B_1}  & \int_{B_2} p(t,z,y)\,\mu(dy) \\
 &\ge c\inf_{t>0, z\in B_1}\int_{B_2}\left( \frac{1}{V(\phi^{-1}(t))}\wedge \frac{t}{V(d(z,y))\phi(d(z,y))}\right)\,\mu(dy) \\
  &\ge c\inf_{t>0}\left( \frac{1}{V(\phi^{-1}(t))}\wedge \frac{t}{V(8 \phi^{-1}(t) )\phi(8 \phi^{-1}(t) )}\right)\,\mu(B_2)\,\ge\, c_0,
   \end{split}\end{equation}
where in the last inequality we used the doubling properties of $V$ and $\phi$.

 Let $\gamma=-4\log (c_0/2)$  and constants
 $\rho>2$ and  $c_*>0$ will be  chosen later.
Set $s=\gamma t/\log\log t$ for $t > e^2$.
 According to
 Lemma \ref{space-c}, there exists a sequence $\{A_i\}_{i=0}^\infty$ depending on $x$ and $s$ such that
 each $A_i$ is a ball of radius $2 \phi^{-1}(s)$, $\lim_{i\to\infty}d(x,A_i)=\infty$, and the following hold:
\[
x\in A_0,~\,~~~ A_i\cap A_{i+1}\ne \emptyset~~\mbox{ for all }~i\in {\mathbb N},\,~~~
A_i\cap A_j=\emptyset ~~\mbox{ for all }~ |i-j|\ge 2.
\]
For $k\ge 1$, set
  \begin{align*} E_k=\Big\{&\sup_{x\in M} (l(x,ks)-l(x,(k-1)s))\le c_* (t/\log\log t)/ V(\phi^{-1}(t/\log\log t)),\\
   &\sup_{0\le u<s}d(X_{(k-1)s+u},X_{(k-1)s})\le \rho \phi^{-1}(s), \,\, X_{ks}\in A_{2k} \Big\}.\end{align*}
   Let
   $$
   B_1:=\big\{L^*(s)\le c_*(t/\log\log t)/ V(\phi^{-1}(t/\log\log t)\big\},$$
   $$B_2:=\big\{\sup_{0<u<s} d(X_u,X_0)\le \rho \phi^{-1}(s)\big\}~\mbox{ and }~
   B_{3,k}:=\big\{ X_{s}\in A_{2k}\big\}.$$
   By the strong Markov property, for all $x\in M$,
   \begin{equation}\label{eq:set1}\begin{split} \Pp^x\left(\bigcap_{k=1} ^{n_0} E_k| \mathscr{F}_{(n_0-1)s}\right)
       &=\Big(\prod_{k=1}^{n_0-1} \I_{E_k} \Big)\Pp^{X_{(n_0-1) s}} (E_{n_0})\\
       &=\Big(\prod_{k=1}^{n_0-1} \I_{E_k} \Big)\Pp^{X_{(n_0-1) s}} (B_1\cap B_2\cap B_{3,n_0}).
       \end{split}\end{equation}

   First, let $c_1, d_1$ and $d_4$ be the constants in \eqref{assum-qq}. For $s>0$ and $c_*>0$ with $c_* c_{1} \gamma^{-1+(d_1/d_4)}\ge 1$, using Proposition \ref{p-local}, we have
   \begin{align*}\sup_{z\in M}  \Pp^{z}(B_1^c)
   &\le \sup_{z\in M} \Pp^{z}\left(L^*(s)\ge c_* c_{1} \gamma^{-1+(d_1/d_4)}s/V(\phi^{-1}(s))\right)\\
   &\le c_2\big(c_* c_{1} \gamma^{-1+(d_1/d_4)}\big)^{-c_3},\end{align*} where in the first inequality we have used \eqref{assum-qq}, and $c_2, c_{3}$  are positive constants independent of $s$ and $c_*$. Second, according to
Propositions \ref{con} and \ref{lemma-inf}, there is a constant $c_4\in(0,1)$ such that for all
   $s>0$ and  $\rho \ge1$,
   \begin{align*}\sup_{z\in M}  \Pp^{z}(B_2^c)
   \le& \sup_{z\in M} \Pp^{z}\left(\sup_{0<u<s} d(X_u,z)\ge \rho \phi^{-1}(s)\right)
   \le c_4^\rho.
   \end{align*}
 Third, by \eqref{e:third}, for any $k\ge 1$, \begin{align*}
  \inf_{z\in A_{2(k-1)}} \Pp^z(B_{3,k})
 =\inf_{z\in A_{2(k-1)}}\int_{A_{2k}} p(s,z,y)\,\mu(dy)
  \ge &c_0.
   \end{align*}

Combining with all the estimates above and the fact
$$\Pp(D_1\cap D_2\cap D_3)\ge \Pp(D_3)-\Pp(D_1^c)-\Pp(D_2^c),$$ we find that
$$\inf_{z\in A_{2(k-1)}}\Pp^z(E_k)=\inf_{z\in A_{2(k-1)}}\Pp^z(B_1\cap B_2\cap B_{3,k}) \ge c_0- c_2\big(c_* c_{1}\gamma^{-1+(d_1/d_4)}\big)^{-c_3}- c_4^\rho.$$
Now we choose $c_*$ and $\rho$ depending on  $d_1, d_4$ and $c_i$, $i=1, \dots 4$,  large enough such that
$ \inf_{z\in  A_{2(k-1)}}\Pp^x(E_k)\ge c_0/2.$
By this and \eqref{eq:set1}, we find that for all $x\in M$ and $t > e^2$,
\begin{align*}
\Pp^x\Big(\bigcap_{k=1} ^{n_0} E_k\Big)
&\ge  (c_0/2)^{n_0}\ge
(c_0/2) \Big(\log t\Big)^{-1/4},
\end{align*}
 where $n_0=[\frac{ \log\log t}{\gamma}]+1= [\frac{ \log\log t}{-4\log (c_0/2)}]+1$.
Since there is a constant $C=C(c_*,\rho)>0$ such that
  \begin{equation*}\label{eq:set}
  \bigcap_{k=1} ^{n_0} E_k\subset \Big\{L^*(t)< C (t/\log\log t)/ V(\phi^{-1}(t/\log\log t))\Big\},\end{equation*}
we get
 for all $x\in M$ and $t > e^2$,
\begin{align*}
\Pp^x\Big\{L^*(t)<C(t/\log\log t)/ V(\phi^{-1}(t/\log\log t))\Big\}\ge (c_0/2) \Big(\log t\Big)^{-1/4},
\end{align*}
Therefore,
\begin{align*} &\Pp^x\Big(\sup_{x\in M}({l(x, t_{k+1})-l(x,t_k)})< C (t_{k+1}/\log\log t_{k+1})/ V(\phi^{-1}(t_{k+1}/\log\log t_{k+1}))|\mathscr{F}_{t_{k}}\Big)\\
&\ge
\inf_{z\in M} \Pp^z\Big(L^*(t_{k+1})< C(t_{k+1}/\log\log t_{k+1})/ V(\phi^{-1}(t_{k+1}/\log\log t_{k+1}))\Big)\\
&\ge (c_0/2) (k+1)^{-1/2}, \end{align*}   whose summation on $k$  diverges. This completes the proof.
\end{proof}

As in the proofs of Theorems \ref{local-1} and \ref{LILinfL}, let $R(t)=\mu(X([0,t]))$ be the range of the process $X$. As a direct application of previous theorems, we have the following statements for the ranges.
\begin{theorem} \label{range} Under Assumption $\ref{assmp3}$,
there exist constants $c_0,c_1\in (0,\infty)$ such that
\begin{eqnarray}
~~~~~~~~\,\,\,\,~\limsup_{t\to\infty} \frac{R(t)}{ V(\phi^{-1}(t/\log\log t))\log\log t}= c_0, ~~~\quad\,
\Pp^x\mbox{-a.e. } \omega,~~\forall x\in M, \label{eq:rnoups}\\
\liminf_{t\to\infty} \frac{R(t)}{V\left(\phi^{-1}\left({t}/{\log\log t}\right)\right)}= c_1, ~~~\quad\,
\Pp^x\mbox{-a.e. } \omega,~~\forall x\in M.\label{eq:rnlow}
\end{eqnarray}
\end{theorem}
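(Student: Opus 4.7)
The plan is to observe that Theorem \ref{range} follows as a near-corollary of Theorems \ref{local-1} and \ref{LILinfL} together with the elementary inequality $t\le L^*(t)R(t)$ recorded in \eqref{eq:Lrnrel}. The matching one-sided inequalities are in fact already embedded in the proofs of the local-time LILs: \eqref{LILrnoinf} gives the upper bound for the $\liminf$ in \eqref{eq:rnlow}, and \eqref{eq:rnlisup} gives the upper bound for the $\limsup$ in \eqref{eq:rnoups}. So the only remaining work is the corresponding lower bounds, and then a zero-one argument to upgrade them to constants.

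For the $\limsup$ in \eqref{eq:rnoups}, I would start from $R(t)\ge t/L^*(t)$ and write
\begin{equation*}
\limsup_{t\to\infty}\frac{R(t)}{V(\phi^{-1}(t/\log\log t))\log\log t}
\ge \Bigl(\liminf_{t\to\infty}\frac{L^*(t)\,V(\phi^{-1}(t/\log\log t))\log\log t}{t}\Bigr)^{-1}.
\end{equation*}
By Theorem \ref{LILinfL}, the denominator on the right hand side equals the constant $c_0$ a.s., so the $\limsup$ is bounded below by $1/c_0>0$. Combined with \eqref{eq:rnlisup} this pins the limsup into a finite, positive range. Symmetrically, for the $\liminf$ in \eqref{eq:rnlow}, I would again use $R(t)\ge t/L^*(t)$ to get
\begin{equation*}
\liminf_{t\to\infty}\frac{R(t)}{V(\phi^{-1}(t/\log\log t))}
\ge\Bigl(\limsup_{t\to\infty}\frac{L^*(t)\,V(\phi^{-1}(t/\log\log t))}{t}\Bigr)^{-1},
\end{equation*}
and Theorem \ref{local-1} identifies the denominator on the right as the constant $c_0$ a.s. Together with \eqref{LILrnoinf} this confines the liminf to a bounded positive interval.

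Finally, the events
$\{\limsup_{t\to\infty}R(t)/(V(\phi^{-1}(t/\log\log t))\log\log t)\le \lambda\}$ and
$\{\liminf_{t\to\infty}R(t)/V(\phi^{-1}(t/\log\log t))\le \lambda\}$
are tail events for every $\lambda$, so Theorem \ref{t:01tail} applies and each of these quantities is $\Pp^x$-a.s.\ equal to a deterministic value, independent of $x\in M$. The two-sided bounds just obtained then guarantee that these deterministic values lie in $(0,\infty)$, yielding the constants $c_0$ and $c_1$ in the statement.

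I do not foresee any serious obstacle: the nontrivial analytic input (the tail estimates for $L^*(t)$ driving Theorems \ref{local-1} and \ref{LILinfL}, and the estimate of $R(t)$ via the maximal displacement of $X$) has already been carried out, and the reduction here is essentially algebraic once the duality $t\le L^*(t)R(t)$ is invoked. The only minor care needed is to check that in applying the $\liminf$/$\limsup$ inversion one is not swapping exceptional null-sets along different subsequences, but this is harmless since the a.s.\ identities for $L^*$ hold on a single full-measure set along the entire time axis.
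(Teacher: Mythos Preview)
Your proposal is correct and follows essentially the same route as the paper: the upper bounds for \eqref{eq:rnoups} and \eqref{eq:rnlow} are quoted from \eqref{eq:rnlisup} and \eqref{LILrnoinf}, the lower bounds are obtained from $t\le L^*(t)R(t)$ together with Theorems \ref{LILinfL} and \ref{local-1} respectively, and Theorem \ref{t:01tail} upgrades the resulting two-sided bounds to constants.
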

\begin{proof}
First, the upper bound of \eqref{eq:rnoups} is already obtained in \eqref{eq:rnlisup}.
The lower bound of \eqref{eq:rnoups} is a consequence of \eqref{eq:Lrnrel} and Theorem \ref{LILinfL}.
Next, the upper bound of \eqref{eq:rnlow} is already obtained in \eqref{LILrnoinf}.
The lower bound of \eqref{eq:rnlow} is a consequence of \eqref{eq:Lrnrel} and Theorem
\ref{local-1}.
Finally, the zero-one law for tail events (Theorem \ref{t:01tail}) yields the desired results.
\end{proof}

\section{Examples: Jump Processes of Mixed Types on Metric Measure Spaces}\label{Sect5}
We now give three examples. The first one is the $\beta$-stable-like processes on $\alpha$-set.
This is the case $d_1=d_2=\alpha$ and $d_3=d_4=\beta$ in
\eqref{eqn:univd*} and \eqref{assum-2},
and our results can be written simply as Theorem \ref{stable-like} in Section \ref{section1}.

The other two examples below are essentially taken from \cite[Example 2.3(1) and (2)]{CK1}. We recall the framework on the metric measure space from here. Let $(M,d,\mu)$ be a locally compact, separable and connected metric space such that there is a strictly increasing function $V$ satisfying \eqref{e:BV} and
\eqref{eqn:univd*}, i.e.
for any $x\in M$ and $r>0$,
$
\mu(B(x,r))\asymp V(r),
$
and there exist constants $c_1, c_2>0$, $d_2\geq d_1>0$
 such that
$$
~~\mbox{   }~~\mbox{   }~\mbox{   }~~~
  c_1 \Big(\frac Rr\Big)^{d_1} \leq \frac{V
(R)}{V (r)} \ \leq \ c_2 \Big(\frac Rr\Big)^{d_2}~ \hbox{ for
every }~ 0<r<R<\infty.
$$
\begin{example}\label{exfe2}\,
Assume that there exist $0<\beta_1\le \beta_2<\infty$ and a probability measure $\nu$ on $[\beta_1,\beta_2]$ such that
$$\phi(r)=\int_{\beta_1}^{\beta_2} r^\beta\,\nu(d\beta),\quad r>0.$$ Clearly, $\phi$ is a continuous strictly increasing function such that \eqref{assum-2} holds with $d_3=\beta_1$ and $d_4=\beta_2$.
Consider a regular Dirichlet form $(\mathscr{E}, \mathscr{F})$ on $L^2(M,\mu)$ such that $\mathscr{E}$ is given by \eqref{eq:DFshape} and the
L\'evy measure $n(\cdot,\cdot)$ satisfies \eqref{eq:Levyofe} with the function $\phi$ given above. Then the associated Hunt process has the transition density function $p(t,x,y)$ satisfying
Assumption \ref{assmp1} with the functions $V$ and $\phi$ given above. Furthermore, we have the following assertions.

\begin{itemize}
\item[(i)] All the statements of theorems in Section \ref{Sect3} hold for sample paths of the process $X$.

\item[(ii)] If
$d_2<\beta_1$, then the local time of the process $X$ exists, and all the theorems in Section \ref{Sect4} hold for local times and the range of the process $X$.
\end{itemize}
\end{example}

\begin{example}\label{exfe3}\,
Consider the following increasing function
$$\phi(r)=\left(\int_{\beta_1}^{\beta_2} r^{-\beta}\,\nu(d\beta)\right)^{-1},\quad r>0,$$ where $\nu$ is a probability measure on $[\beta_1,\beta_2]\subset (0,\infty)$. We can check easily that for this example \eqref{assum-2} also holds with $d_3=\beta_1$ and $d_4=\beta_2$.
Consider a regular Dirichlet form $(\mathscr{E}, \mathscr{F})$ on $L^2(M,\mu)$ such that $\mathscr{E}$ is given by \eqref{eq:DFshape} and the
L\'evy measure $n(\cdot,\cdot)$ satisfies \eqref{eq:Levyofe} with the function $\phi$ given above. Then the associated Hunt process has the transition density function $p(t,x,y)$ satisfying
Assumption \ref{assmp1} with the functions $V$ and $\phi$ given above. Furthermore, we have the same conclusions for the process $X$ as these in Example \ref{exfe2}.
\end{example}

\begin{example}\label{exfe4}\,
We give an example where $\beta$ could be strictly larger than $2$.
Assume that $(M,d,\mu)$ enjoys the following:

\begin{itemize}
\item[(i)] $\mu$ is a $\alpha$-set, namely $d_1=d_2=\alpha$.

\item[(ii)] There exists a $\mu$-symmetric conservative diffusion on $M$
which has a symmetric jointly continuous transition density
$\{q(t,x,y): t>0, x,y\in M\}$ with the following estimates for
all $t>0, x,y\in M$:
\begin{align*}
c_{1}t^{-\alpha/\beta_*}\exp \Big(-c_{2}\Big(\frac{d(x,y)^{\beta_*}}t\Big)^{\frac 1{\beta_*-1}}\Big)
&\le  q(t,x,y)\\
&\le  c_{3}t^{-\alpha/\beta_*}\exp \Big(-c_{4}\Big(\frac{d(x,y)^{\beta_*}}t\Big)^{\frac 1{\beta_*-1}}\Big),
\end{align*}
where $\beta_*\ge 2$.
\end{itemize}

It is known that various fractals including the Sierpinski gaskets
and Sierpinski carpets satisfy the conditions and for those cases,
typically $\beta_*>2$. For example, for Sierpinski gaskets,
$\beta_*=\log5/\log2$ and $\alpha=\log5/\log2$. (see \cite{Ba1,K14}
for details.)

Now, for $0<\gamma <1$, let $\{\xi_t\}_{t>0}$
be the strictly $\gamma$-stable subordinator; namely
let $\{\xi_t\}_{t>0}$ be a one dimensional non-negative L\'evy process with the generating function $\Ee[\exp (-u\xi_t)]=
\exp (-tu^{\gamma})$. Assume further that $\{\xi_t\}_{t>0}$ is independent of
the diffusion process above. Then the subordinate process of the diffusion by the $\gamma$-stable subordinator has the following
heat kernel
\[p(t,x,y)=\int_0^{\infty}q(u,x,y)\eta_t(u)\,du\qquad \mbox{for all}~~
t>0,~x,y\in M,\]
where $\{\eta_t (u):t>0, u\ge 0\}$
is the transition density of $\{\xi_t\}_{t>0}$. It is easy to check that
$p(t,x,y)$ satisfies \eqref{e:hke} with $\beta=\gamma\beta_*$, so the conclusions of Theorem
\ref{stable-like} hold (see \cite{TKjump} for details).
\end{example}

\appendix

\section{Some Proofs and Technical Lemmas}\label{Appx}
In this appendix, we give some proofs of the results in Section \ref{secn2}, and
also present some technical lemmas that are used in the paper.

\subsection{Proofs of some results in Section \ref{secn2}}\label{A1}
\begin{proof}[Proof of Proposition $\ref{con}$]
Let $\zeta$ be the lifetime of the process $X$ and $M_0=M\setminus {\mathscr{N}}$.  By \eqref{a-two-sidedlow},
we have that for any $t >0$ and every $x\in M_0$,
$$
\Pp^x (\zeta >t ) \geq  \int_{B(x,
\phi^{-1}(t))}p(t,x,y)\,\mu(dy) \geq \int_{B(x,
\phi^{-1}(t))} \frac{C_1}{V(\phi^{-1}(t)) }\,\mu(dy) \ge C_1C_*^{-1}>0.
$$
Let $u(x):=\Pp^x (\zeta =\infty)$. Then $u(x)=\lim_{t\to \infty}
\Pp^x (\zeta >t)\geq 
C_1C_*^{-1}>0$ for every $x\in M_0$. Note that $ u(X_t)={
1}_{\{\zeta >t\} } u(X_t)= \Ee^x ( { 1}_{\{\zeta = \infty\} }|
\mathscr{F}_t) $ is a bounded martingale with $\lim_{t\to \infty} u(X_t) =
{1}_{\{\zeta = \infty\} }$. Let $\{K_j; j\geq 1\}$ be an
increasing sequence of compact sets so that $\cup_{j=1}^\infty
K_j=M$ and define $\tau_j = \inf\{t\geq 0:  X_t \notin K_j\}$. Since
$X$ admits no killings inside $M$, we have $\tau_j<\zeta$ a.s.
Clearly,  $\lim_{j\to \infty} \tau_j=\zeta$. By the optional
stopping theorem,  we have for $x\in M_0$,
\begin{align*}
u(x)  & =  \lim_{j\to \infty} \Ee^x u(X_{\tau_j})= \Ee^x \big(
\lim_{j\to \infty} u(X_{\tau_j} )\big)\\
 & =   \Ee^x (\lim_{j\to
\infty} u(X_{\tau_j} ) {1}_{\{\zeta < \infty\}}+\lim_{t\to
\infty} u(X_{t})
{1}_{\{\zeta = \infty\}}\big)\\
 &\geq   
 C_1C_*^{-1} \Pp^x (\zeta <\infty) + \Pp^x (\zeta =\infty).
\end{align*}
It follows that $\Pp^x (\zeta <\infty)=0$ for every $x\in M_0$. The proof is complete. \end{proof}

\begin{proof}[Proof of Proposition $\ref{space}$] 
 Fix a point $x_0\in M$ and let $u_t(x)=p(t,x_0,x)$.
By Proposition \ref{con}, $\|u_t\|_1=1$; on the other hand, $\|u_t\|_\infty\le \frac{C_2}{V(\phi^{-1}(t))}$. Hence,
noting $V(\infty)=\infty$, we have
$$\mu(M)\ge \frac{\|u_t\|_1}{\|u_t\|_\infty}\to \infty,\quad t\to\infty,$$ that is, $\mu(M)=\infty$. Due to (1) the measure of any ball is finite, and so $M$ is not contained in any ball, which proves $\textrm{diam }(M)=\infty$.
The last assertion immediately follows from \cite[Corollary 5.3]{GH11} and the fact that $M$ is connected.
 \end{proof}

\begin{proof}[Proof of Proposition $\ref{p:Holder_estimates}$]
For simplicity, we only deal with the case that both Assumptions \ref{assmp1} and \ref{assmp2} hold true. The proof is essentially the same as that of \cite[Theorem 4.11]{CK}, and
we shall highlight a few different steps.

 For each $A\subset [0,\infty)\times M$, define
$\sigma_A=\inf\{t>0: Z_t\in A\}$ and $A_s=\{y\in M: (s,y)\in A\}.$
Let $Q(t,x,r)=[t,t+c_0\phi(r)]\times B(x,r),$
where $c_0\in(0,1)$ is the constant in \eqref{diff}.
Then, following the argument of \cite[Lemma 6.2]{CK1} and using Proposition \ref{jump} and the L\'{e}vy system for the process $X$ (see \cite[Appendix A]{CK1}), we can obtain that there is a constant $c_1>0$ such that for
all $ x\in M \setminus \mathscr{N}$,
$t,r>0$ and any compact subset $A\subset Q(t,x,r)$
\begin{equation}\label{proof-holder-1}\Pp^{(t,x)} (\sigma_A<\tau_{Q(t,x,r)})\ge c_1\frac{m\otimes \mu(A)}{V(r)\phi(r)},\end{equation} where $m\otimes \mu$ is a product measure of the Lebesgue measure $m$ on $\R_+$ and $\mu$ on $M$. Note that
unlike \cite[Lemma 6.2]{CK1}, here \eqref{proof-holder-1} is satisfied
for all $r>0$ not only $r\in(0,1]$, which is due to the fact \eqref{diff} holds for all $r>0$.

Also by the L\'{e}vy system of the process $X$, we find that there is a constant $c_2>0$ such that for
all $ x\in M \setminus \mathscr{N}$,
$t,r>0$ and $s\ge 2r$,
\begin{equation*}\begin{split}\Pp^{(t,x)}(X_{\tau_{Q(t,x,r)}}\notin B(x,s))&=\Ee^{(t,x)} \int_0^{\tau_{Q(t,x,r)}} \int_{B(x,s)^c} J(X_v,u)\,\mu(du)\, dv\\
&\le c_2\left(\int_{r>s/2} \frac{d V(r)}{V(r)\phi(r)}\right) \Ee^x \tau_{B(x,r)}.\end{split}\end{equation*}
On one hand, by the doubling properties of $V$ and $\phi$, we have
$$\int_{r>s/2} \frac{d V(r)}{V(r)\phi(r)}=\sum_{k=0}^\infty \int_{r\in(2^{k-1}s,2^k s]} \frac{d V(r)}{V(r)\phi(r)}\le \sum_{k=0}^\infty \frac{V(2^{k}s)-V(2^{k-1} s)}{V(2^{k-1} s)\phi(2^{k-1}s)}\le c_3 \frac{1}{\phi(s)}.$$ On the other hand, for
all $ x\in M \setminus \mathscr{N}$ and $r,t>0$,
by \eqref{eq:rlamwfq} (which is proved by the doubling property \eqref{doubphiw0} of $\phi$ only),
$$\Pp^x(\tau_{B(x,r)}\ge t)\le \exp(-c_4t/\phi(r)),$$ which implies that \begin{equation}\label{upperexit}\Ee^x (\tau_{B(x,r)})=\int_0^\infty \Pp^x(\tau_{B(x,r)}\ge t)\,dt\le c_5 \phi(r).\end{equation}
Therefore, there is a constant $c_6>0$ such that for
all $ x\in M \setminus \mathscr{N}$,
$t,r>0$ and $s\ge 2r$,
\begin{equation}\label{proof-holder-2}\Pp^{(t,x)}(X_{\tau_{Q(t,x,r)}}\notin B(X,s))
\le
c_6 \frac{\phi(r)}{\phi(s)}.\end{equation}

Having \eqref{proof-holder-1} and \eqref{proof-holder-2} at hand, one can follow the argument of \cite[Theorem 4.11]{CK} to get that the H\"{o}lder continuity of bounded parabolic functions (see the definition before Proposition \ref{thm:PHI_hw}), and so the desired assertion \eqref{e:Holder} for the heart kernel $p(t,x,y).$ Furthermore, \eqref{e:Holder2} is an immediately consequence of \eqref{e:Holder}.
\end{proof}

\begin{proof}[Proof of Theorem $\ref{t:01tail}$]
The proof is similar to the one of \cite[Proposition 2.3]{BK}. For
completeness, we provide the full proof here. (See \cite{bbcan} for
the original proof.) Let $\eps>0$ and $A$ be a tail event. Fix $x_0\in M$. By the
martingale convergence theorem, $\Ee^{x_0}[1_A| {\mathcal F}_t]\to
1_A$ a.s. as $t\to \infty$. Choose $t_0$ large enough so that
\begin{equation}\label{eq:8.2}
\Ee^{x_0}|\Ee^{x_0}[1_A| {\mathcal F}_{t_0}]-1_A|<\eps.
\end{equation}
Set $Y:=\Ee^{x_0}[1_A| {\mathcal F}_{t_0}]$. Then
\begin{equation}\label{eq:8.5}
 |\Pp^{x_0}(A)-\Ee^{x_0}(Y;A)|
  = |\Ee^{x_0}(1_A;A)-\Ee^{x_0}(Y;A)|<\eps.
\end{equation}
On the other hand, using \eqref{exit} and
the doubling property of $\phi$, we can take $c_1>0$ large so that
\begin{equation}\label{eq:enwq1}
\Pp^{x_0}(\sup_{s\leq t_0}d(X_s,x_0)>
c_1\phi^{-1}(t_0))<\eps.
\end{equation}
By Proposition \ref{p:Holder_estimates}, we now choose $t_1$
large so that for all $f \in L^\infty (M)$ and $x\in M$ with  $d(x,x_0)\leq
c_1\phi^{-1}(t_0)$,
\begin{equation}\label{eq:copt}
|P_{t_1}f(x)-P_{t_1}f(x_0)|<\eps\Vert f\Vert_\infty.
\end{equation}

Since $A$ is a tail event, there exists an event $C$ such that
$A=C\circ \theta_{t_0+t_1}$. Let $f(z)=\Pp^z(C)$. Then by the Markov
property at time $t_1$,
\begin{equation}\label{eq:shif}
\Ee^w(1_C\circ \theta_{t_1})=\Ee^w \Ee^{X_{t_1}}1_C=\Ee^wf(X_{t_1})=P_{t_1}f(w).
\end{equation}
Thus the Markov property at time $t_0$ and $(\ref{eq:shif})$ further give us
\begin{equation}
\Ee^{x_0}(Y;A)=\Ee^{x_0}[Y\Ee^{X_{t_0}}(1_C\circ \theta_{t_1})] =
 \Ee^{x_0}[YP_{t_1}f(X_{t_0})]
\label{eq:8.7}\end{equation}
and
\begin{equation}\label{eq:8.8}
\Pp^{x_0}(A)=\Ee^{x_0} 1_A=\Ee^{x_0}\Ee^{X_{t_0}}(1_C\circ \theta_{t_1})
 = \Ee^{x_0}[P_{t_1}f(X_{t_0})].
\end{equation}

Let  $A_{t_0}=\{d(X_{t_0},x_0)\leq c_1\phi^{-1}(t_0)\}$. Using
\eqref{eq:enwq1} and \eqref{eq:copt}, we see that
\begin{equation}\begin{split}\label{eq:8.9}
&|\Ee^{x_0}[YP_{t_1}f(X_{t_0})]-P_{t_1}f(x_0)
\Ee^{x_0}Y| \\
& \le   2\Pp_x(  A_{t_0}^c)+
\vert\Ee^{x_0}[YP_{t_1}f(X_{t_0});A_{t_0}]
-P_{t_1}f(x_0)\Ee^{x_0}[Y;A_{t_0}]\vert \\
& <   2\eps+ \vert\Ee^{x_0}[Y| P_{t_1}f(X_{t_0})-P_{t_1}f(x_0)|
;A_{t_0}] \vert \leq 3\eps.
\end{split}\end{equation}
Similarly
\begin{equation}\label{eq:8.10}
|\Ee^{x_0} P_{t_1}f(X_{t_0})-P_{t_1}f(x_0)|\leq 3\eps.
\end{equation}

Combining $(\ref{eq:8.5}), (\ref{eq:8.7}), (\ref{eq:8.8}), (\ref{eq:8.9})$ and
$(\ref{eq:8.10})$,
\begin{align*}|\Pp^{x_0}(A)-\Pp^{x_0}(A)\Ee^{x_0}Y|\leq &|\Pp^{x_0}(A)-\Ee^{x_0}(Y;A)|\\
&+| \Ee^{x_0}[YP_{t_1}f(X_{t_0})]-P_{t_1}f(x_0)
\Ee^{x_0}Y|\\
&+|P_{t_1}f(x_0)
\Ee^{x_0}Y-\Ee^{x_0} P_{t_1}f(X_{t_0})\Ee^{x_0}Y | \le  7\eps.\end{align*}
Using this and  $(\ref{eq:8.2})$, $|\Pp^{x_0}(A)-\Pp^{x_0}(A)\Pp^{x_0}(A)|\leq 8\eps.$
Since $\eps$ is arbitrary, we deduce $\Pp^{x_0}(A)=[\Pp^{x_0}(A)]^2$, and so $\Pp^{x_0}(A)$ is 0 or 1. Since
$\Pp^x(A)=\Ee^x P_{t_1}f(X_{t_0})=P_{t_0}(P_{t_1}f)(x)$
is continuous in $x$ (which is easily seen from Proposition \ref{p:Holder_estimates}) and $M$ is connected,
we further conclude that either $\Pp^x(A)$ is $0$ for all $x\in M$ or else it is $1$ for all $x \in M$. The proof is complete.
\end{proof}

\begin{proof}[Proof of Proposition $\ref{w-ndle}$]
For any $x',y'\in B(x,r/2)$ and $t>0$,
$$p(t,x',y')=p^{B(x,r)}(t,x',y')+\Ee^x\Big(p(t-\tau_{B(x,r)},X_{\tau_{B(x,r)}},y')
: \tau_{B(x,r)} < t \Big).$$ On the one hand,
$$\Ee^x\Big(p(t-\tau_{B(x,r)},X_{\tau_{B(x,r)}},y'): \tau_{B(x,r)} < t \Big)
\le \sup_{s\le t; d(y,z)\ge r/2} p(s,z,y) \le \frac{C_2t}{V(r/2)\phi(r/2)}.$$
For any $\delta\in(0,1/2)$, any $x',y'\in B(x,\frac{1}{2} \delta r)$ and $t=\phi(\delta r)$,
\begin{align*} p(t,x',y')\ge &C_1  \left( \frac{1}{V(\phi^{-1}(t))} \wedge  \frac{t}{V(d(x',y'))\phi(d(x',y'))}\right)
\ge\frac{C_1}{V(\delta r)},\end{align*} so
$$ p^{B(x,r)}(t,x',y')\ge \frac{C_1}{V(\delta r)}-   \frac{C_2}{V(r/2)}.$$ By the doubling property of $V$, we find that $$p^{B(x,r)}(t,x',y')\ge \frac{C_3}{V(r)}$$ providing that $\delta\in(0,1/2)$ is small enough.
Having this at hand, one can follow the argument of \cite[Lemma 2.3]{BBK} and use the doubling property of $\phi$ to get the first required assertion.
The second assertion of the proposition directly follows from the argument above.
\end{proof}

\begin{proof}[Proof of Proposition $\ref{lemma-inf}$]
Here we only prove the case that Assumption \ref{assmp1} and \ref{assmp2} hold. According to \eqref{exit} and the doubling property of $\phi$, for any $r>0$ and
all $x\in M$,
$$\Pp^x\Big(\sup_{0\le s \le c_0\phi(r)} d(X_s, X_0)\le 2r\Big)\le a^*_2$$ holds with some constants $c_0>0$ and $a^*_2\in(0,1)$ independent of $x$ and $r$.  Then, for any $n\ge1$ and
 $x\in M$,
by the Markov property,
 \begin{align*} \Pp^x&(\sup_{0\le s\le c_0n \phi(r)} d(X_s, x)\le r)\\
 &\le \Ee^x\Big( \I_{\{\sup_{0\le s\le c_0(n-1) \phi(r)} d(X_s, x)\le r\}}; \Pp^{X_{c_0(n-1)\phi^{-1}(r)}}(\sup_{0\le s \le c_0 \phi(r)} d(X_s, X_0)\le 2r)\Big)\\
 &\le a^*_2\Pp^x( \sup_{0\le s\le c_0(n-1) \phi(r)} d(X_s, x)\le r).\end{align*} This proves the upper bound.

 On the other hand, according to Proposition \ref{w-ndle}, there are constants $\delta_0, c_1>0$ such that for
 all $x\in M$ and any $r>0$,
 $$p^{B(x,r)}(\delta_0\phi(r) , x',y')\ge c_1 V(r)^{-1},\quad x',y'\in B(x,r/2),$$ where $p^{B(x,r)}(t,x',y')$ denotes the Dirichlet heat kernel of the process killed by exiting $B(x,r)$. Then, choosing $m= [c_0/ \delta_0]+1$,
 \begin{align*}
 \Pp^x&(\sup_{0\le s\le \delta_0 mn \phi(r)} d(X_s, x)\le r)\\
 &= \int_{B(x,r)} p^{B(x,r)}(\delta_0mn \phi(r), x,y)\,\mu(dy)\\
 &\ge\int_{B(x,r/2)}\int_{B(x,r/2)}\ldots\int_{B(x,r/2)}p^{B(x,r)}\Big(\delta_0 \phi(r), x,x_1\Big)\,\mu(dx_1)\\
 &\qquad p^{B(x,r)}\Big(\delta_0 \phi(r), x_1,x_2\Big)\mu(dx_2)\ldots
 \int_{B(x,r/2)}p^{B(x,r)}\Big(\delta_0 \phi(r), x_{mn-1},y\Big)\,\mu(dy)\\
 &\ge \Big( c_1 V(r)^{-1} \mu(B(x,r/2))\Big)^{mn}. \end{align*} Thanks to the doubling property of $V$, there exists a constant $a_1^*\in (0,1)$ such that for all $x\in M$, $r>0$ and $n\ge 1$,
$$\Pp^x(\sup_{0\le s\le \delta_0 mn \phi(r)} d(X_s, x)\le r)\ge{a_1^*}^n.$$
By the fact that
 $$ \Pp^x(\sup_{0\le s\le c_0 n \phi(r)} d(X_s, x)\le r) \ge \Pp^x(\sup_{0\le s\le \delta_0 mn \phi(r)} d(X_s, x)\le r),$$ the proof is complete.
\end{proof}

\subsection{Some technical
results}\label{A:2}
The first result is a extended version of Garsia's lemma (\cite[Lemma 1]{Gar}), see \cite[Lemma 6.1]{bp} for a version of Garsia's
lemma for a fractal.

\begin{lemma}\label{thm:Garlw}
Let $(M,d,\mu)$ satisfy \eqref{e:BV} and \eqref{eqn:univd*}. Suppose
$q:[0,\infty)\to [0,\infty)$ is a measurable function with $q(0)=0$
and that there exist constants $C_1,C_2$ and $\gamma_1,\gamma_2$
such that
\begin{align}\label{e:asum_q}
C_1 \left(\frac{r}{R}\right)^{\gamma_1}\le \frac{q(r)}{q(R)} \le C_2 \left(\frac{r}{R}\right)^{\gamma_2} \quad \text{for every } 0<r\le R< \infty.
\end{align}
Let
$\Psi:[0,\infty) \to [0,\infty)$ be a non-negative strictly increasing convex function such that
$\lim_{u\to\infty}\Psi(u)=\infty$. For any $x_0\in M$ and $R_0>0$, let $H=B(x_0, R_0)$ and $f: H \to \Rd$ be a measurable function. If
\[
\Gamma(H):=\iint_{H\times H}\Psi\Big(\frac{|f(x)-f(y)|}{q(d(x,y))}\Big)\,\mu(dx)\,\mu(dy)<\infty,
\]
then there exist $c_1, c_2>0$ that depends only on the constants  in
\eqref{eqn:univd*} and \eqref{e:asum_q} such that
\begin{equation}\label{eq:garid}
|f(x)-f(y)|\le c_1\int_0^{d(x,y)}\Psi^{-1}\Big(\frac{c_2\Gamma(H)}{V(u)^2}\Big)\, \frac{q(u)du}{u},
\end{equation}
for $\mu\times\mu$-a.e. $(x,y)\in B(x_0, R_0/8)\times B(x_0, R_0/8)$. If $f$ is continuous, then \eqref{eq:garid} holds
every $(x,y)\in B(x_0, R_0/8)\times B(x_0, R_0/8)$.
\end{lemma}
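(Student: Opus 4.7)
The plan is to carry out the classical Garsia--Rodemich--Rumsey chaining argument, adapted to the metric measure setting: the two-sided power bounds \eqref{eqn:univd*} and \eqref{e:asum_q} are exactly the doubling properties that make a scale-by-scale telescoping via ball averages work as in the one-dimensional original, with Jensen's inequality (using convexity of $\Psi$) being the mechanism that feeds $\Gamma(H)$ into the scale-by-scale bound.

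First I would verify that $\Gamma(H)<\infty$ forces $f\in L^1(H,\mu)$ after subtracting a constant, so that averages $\bar f_r(z):=\mu(B(z,r))^{-1}\int_{B(z,r)}f\,d\mu$ are well defined and, by the Lebesgue differentiation theorem in the doubling space $(M,d,\mu)$, converge to $f(z)$ for $\mu$-a.e.\ $z\in H$. This uses Fubini to pick $x_\ast\in H$ with $\int_H\Psi(|f(x_\ast)-f(y)|/q(d(x_\ast,y)))\,d\mu(y)<\infty$, together with the convexity estimate $u\le 1+\Psi(u)/\Psi(1)$ (valid since $\Psi(u)/u$ is nondecreasing) applied to $u=|f(x_\ast)-f(y)|/q(d(x_\ast,y))$, and boundedness of $q$ on the bounded interval $(0,2R_0]$ via \eqref{e:asum_q}.

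Now fix two Lebesgue points $x_1,x_2\in B(x_0,R_0/8)$, set $d_0:=d(x_1,x_2)$, $r_n:=d_0 2^{-n}$ and $A_n^i:=B(x_i,r_n)$. Since $d_0\le R_0/4$, all these balls lie in $B(x_0,3R_0/8)\subset H$. Telescope
\begin{align*}
|f(x_1)-f(x_2)|\le \big|\bar f_{A_0^1}-\bar f_{A_0^2}\big|+\sum_{i=1}^{2}\sum_{n=0}^{\infty}\big|\bar f_{A_{n+1}^i}-\bar f_{A_n^i}\big|.
\end{align*}
For each nested pair $A_{n+1}^i\subset A_n^i$, Jensen's inequality combined with $q(d(u,v))\le C_2 q(2r_n)$ for $u,v\in A_n^i$ (from \eqref{e:asum_q}) and monotonicity of $\Psi$ gives
\begin{align*}
\Psi\!\left(\frac{|\bar f_{A_{n+1}^i}-\bar f_{A_n^i}|}{C_2 q(2r_n)}\right)\le \frac{1}{\mu(A_{n+1}^i)\mu(A_n^i)}\iint_{A_n^i\times A_n^i}\!\Psi\!\left(\frac{|f(u)-f(v)|}{q(d(u,v))}\right)d\mu(u)\,d\mu(v),
\end{align*}
whose right side is at most $\Gamma(H)/(\mu(A_{n+1}^i)\mu(A_n^i))\asymp \Gamma(H)/V(r_n)^2$ by \eqref{eqn:univd*}. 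Together with the doubling of $q$, this yields
\begin{align*}
\big|\bar f_{A_{n+1}^i}-\bar f_{A_n^i}\big|\le c_1 q(r_n)\,\Psi^{-1}\!\left(\frac{c_2\,\Gamma(H)}{V(r_n)^2}\right).
\end{align*}

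The final step converts this sum into the integral on the right-hand side of \eqref{eq:garid}. Because $V$ is increasing and both $V$ and $q$ change by bounded factors on a dyadic annulus $[r_{n+1},r_n]$ (by \eqref{eqn:univd*} and \eqref{e:asum_q}), and $\int_{r_{n+1}}^{r_n} du/u=\log 2$, the $n$-th summand is comparable to $\int_{r_{n+1}}^{r_n}\Psi^{-1}(c_2\Gamma(H)/V(u)^2)\,q(u)\,du/u$; summing over $n\ge 0$ and over $i=1,2$ yields $c_3\int_0^{d_0}\Psi^{-1}(c_2\Gamma(H)/V(u)^2)\,q(u)\,du/u$. The cross-ball term $|\bar f_{A_0^1}-\bar f_{A_0^2}|$ is treated by the same Jensen argument applied over $A_0^1\times A_0^2$, where $d(u,v)\le 3d_0$, and is then absorbed into the piece of the integral over $[d_0/2,d_0]$. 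If $f$ is continuous, both sides depend continuously on $(x,y)$, so the estimate extends from $\mu\otimes\mu$-almost every pair to every pair in $B(x_0,R_0/8)\times B(x_0,R_0/8)$. The main obstacle is keeping every ball-doubling comparison scale-free: this is precisely where the two-sided power bounds in \eqref{e:asum_q} are essential, ensuring that the constants $c_1,c_2$ in \eqref{eq:garid} depend only on those in \eqref{eqn:univd*} and \eqref{e:asum_q} rather than on $x_0$, $R_0$, or the scale.
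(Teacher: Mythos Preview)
Your proposal is correct and follows essentially the same Garsia--Rodemich--Rumsey chaining argument as the paper: dyadic nested balls, Jensen's inequality with the convexity of $\Psi$ to pass from averages to $\Gamma(H)/V(r_n)^2$, and the two-sided power bounds on $q$ and $V$ to convert the resulting sum into the integral $\int_0^{d(x,y)}\Psi^{-1}(c\Gamma(H)/V(u)^2)\,q(u)\,du/u$. The only organizational differences are that you run two separate chains centered at $x_1$ and $x_2$ joined by an explicit cross-ball term (handled by Jensen over $A_0^1\times A_0^2$), whereas the paper uses a single chain of nested balls containing both points and then specializes to balls centered at $x$ and at $y$; and you establish the a.e.\ statement first via Lebesgue points and then extend by continuity, while the paper proves the continuous case directly and invokes Lebesgue differentiation for the general case. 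Your preliminary check that $\Gamma(H)<\infty$ forces $f-f(x_\ast)\in L^1(H)$, so that the ball averages are well defined, is a point the paper leaves implicit.
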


\begin{proof}
For fixed $(x,y)\in B(x_0, R_0/8)\times B(x_0, R_0/8)$ and $k\ge 0$, let $a_k:=2^{-k+1}d(x, y)$ and $B_k$' be open balls with radii
$a_k$ such that $B_{k+1} \subset B_k$ and $x,y \in B_0 \subset H$.
We denote $f_k:=\frac{1}{\mu(B_k)}\int_{B_k}fd\mu$.
For $(z,w) \in B_{k-1}$,
we have $d(z,w) \le 2 a_{k-1}$, so
by \eqref{e:asum_q},
$C_2q(2 a_{k-1}) \ge   q(d(z,w))$. Thus, since $\Psi$ is increasing,
$$
\Psi\left(\frac{|f(z)-f(w)|}{C_0q(2a_{k-1})}\right) \le
\Psi\left(\frac{|f(z)-f(w)|}{q(d(z,w))}\right),  \quad (z,w) \in B_{k-1} \times B_k.$$
Using this,
the increasing property and
the convexity of $\Psi$ and the Jensen inequality,
   \begin{equation}\label{e:Ga1}\begin{split}
&\Psi\left(\frac{|f_{k-1}-f_{k}|}{C_2q(2a_{k-1})}\right)\\
&\le
\Psi\left(\frac{1}{\mu(B_{k-1})\mu(B_k)}\int_{B_{k-1}\times B_k} \frac{|f(z)-f(w)|}{C_2q(2a_{k-1})} \mu (dw) \mu (dz)\right) \\
&\le
\frac{1}{\mu(B_{k-1})\mu(B_k)}\int_{B_{k-1}\times B_k} \Psi\left(\frac{|f(z)-f(w)|}{q(d(z,w))}\right) \mu (dw) \mu (dz)\\
&\le
\frac{\Gamma(H)}{\mu(B_{k-1})\mu(B_k)}  \le c_1 \frac{\Gamma(H)}{V(a_k)^2},
\end{split}\end{equation}
where in the last inequality we used \eqref{e:BV} and \eqref{eqn:univd*}.

On the other hand, for $k\ge 1$
   \begin{equation}\label{e:Ga2}\begin{split}
&\int_{a_{k+1}}^{a_{k}}\Psi^{-1}\Big(\frac{c_1\Gamma(H)}{V(u)^2}\Big)\, \frac{q(u)du}{u}\\
&\ge q(2a_{k-1})\Psi^{-1}\Big(\frac{c_1\Gamma(H)}{V(a_{k})^2}\Big)\ \int_{a_{k+1}}^{a_{k}} \frac{q(u)}{q(2a_{k-1})}\frac{du}{u}\\
&\ge q(2a_{k-1})\Psi^{-1}\Big(\frac{c_1\Gamma(H)}{V(a_{k})^2}\Big)\ \int_{a_{k+1}}^{a_{k}} C_1 \left(\frac{u}{2a_{k-1}}\right)^{\gamma_1}\frac{du}{u}\\
&= C_1 q(2a_{k-1})\Psi^{-1}\Big(\frac{c_1\Gamma(H)}{V(a_{k})^2}\Big)\
(2a_{k-1})^{-\gamma_1} \int_{a_{k+1}}^{a_{k}} u^{\gamma_1-1}du\\
&= c_2 q(2a_{k-1})\Psi^{-1}\Big(\frac{c_1\Gamma(H)}{V(a_{k})^2}\Big).
\end{split}\end{equation}
Thus, by \eqref{e:Ga1} and \eqref{e:Ga2},  for $k\ge 1$,
\begin{align*}
|f_{k-1}-f_{k}| \le C_0 q(2a_{k-1}) \Psi^{-1} \left(
\frac{c_1\Gamma(H)}{V(a_k)^2}  \right) \le c_3\int_{a_{k+1}}^{a_{k}}\Psi^{-1}\Big(\frac{c_1\Gamma(H)}{V(u)^2}\Big)\, \frac{q(u)du}{u}
\end{align*}
which implies
\begin{align}
\label{e:G12}
\limsup_{k \to \infty}|f_{k}-f_{0}| \le \sum_{k=1}^{\infty}|f_{k-1}-f_{k}|
\le c_2\int_{0}^{d(x,y)}\Psi^{-1}\Big(\frac{c_1\Gamma(H)}{V(u)^2}\Big)\, \frac{q(u)du}{u}.
\end{align}

Suppose that  $f$ is continuous at $x$. Then, let $B_0=B(x, a_0)$, so that $x,y \in B_0=B(x, 2d(x,y)) \subset B(x_0, R_0)$.  By considering $B_k=B(x, a_k)$ for $k \ge 1$,
we get from \eqref{e:G12} that
$$
|f(x)-f_{0}|
\le c_2\int_{0}^{d(x,y)}\Psi^{-1}\Big(\frac{c_1\Gamma(H)}{V(u)^2}\Big)\, \frac{q(u)du}{u}.
$$
Similarly,
we get from \eqref{e:G12} that, if  $f$ is continuous at $y$ then
$$
|f(y)-f_{0}|
\le c_2\int_{0}^{d(x,y)}\Psi^{-1}\Big(\frac{c_1\Gamma(H)}{V(u)^2}\Big)\, \frac{q(u)du}{u}.
$$
Thus, if $f$ is continuous at both $x$ and $y$,
$$|f(x)-f(y)| \le |f(x)-f_{0}|+|f(y)-f_{0}|
\le 2c_2\int_{0}^{d(x,y)}\Psi^{-1}\Big(\frac{c_1\Gamma(H)}{V(u)^2}\Big)\, \frac{q(u)du}{u}.
$$
The general case follows from Lebesgue differentiation theorem (e.g.\ see \cite[Theorem 1.8]{H}).
\end{proof}

The following proposition gives an upper bound for LILs. Since it can be proved by a simple
modification of the proof of \cite[Theorem 3.1]{BK}, we skip the proof.

\begin{proposition}\label{U-LIL}
Let $X$ be a strong Markov process on $(M,d,\mu)$. Suppose $(F_t)_{t\ge0}$ is a continuous adapted non-decreasing functional of $X$ satisfying the following conditions.
\begin{itemize}
\item[(1)] There exists an increasing function $\varphi$ on $\R_+$ satisfying the doubling property and such that
$$\sup_{x\in M, t>0} \Pp^x(F_t\ge b \varphi(t))\to 0\quad\textrm{ as }b\to \infty.$$
\item[(2)] $F_t-F_s\le F_{t-s}\circ\theta_s,\quad 0<s\le t.$
\end{itemize}
Then, there exists a constant $C\in(0,\infty)$ such that
$$\limsup_{t\to\infty} \frac{F_t}{\varphi\left({t}/{\log\log t}\right)\log\log t}\le C,~~~\quad\,
\Pp^x\mbox{-a.e. } \omega,~~\forall x\in M.$$
\end{proposition}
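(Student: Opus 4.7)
The plan is to follow the blueprint of \cite[Theorem 3.1]{BK}: pass to a geometric subsequence by monotonicity, decompose $F_{t_n}$ via the pseudo-subadditivity hypothesis (2), and apply the Borel--Cantelli lemma with probability estimates produced by the strong Markov property together with the uniform tightness in (1).

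Specifically, I will fix a geometric sequence $t_n := q^n$ for some $q>1$, and set $\ell_n := \lceil \log\log t_n \rceil$ and $s_n := t_n/\ell_n$. Iterating condition (2) yields the telescoping bound
\[
F_{t_n} = \sum_{j=1}^{\ell_n}\bigl(F_{j s_n}-F_{(j-1)s_n}\bigr) \le \sum_{j=1}^{\ell_n} F_{s_n}\circ\theta_{(j-1)s_n},
\]
so $\{F_{t_n}>C\ell_n\varphi(s_n)\} \subset \bigcup_{j=1}^{\ell_n}\{F_{s_n}\circ\theta_{(j-1)s_n}>C\varphi(s_n)\}$. Applying the strong Markov property at each $(j-1)s_n$ together with a union bound,
\[
\Pp^x\bigl(F_{t_n}>C\ell_n\varphi(s_n)\bigr)\le \ell_n\sup_{y\in M,\, s>0}\Pp^y\bigl(F_s>C\varphi(s)\bigr).
\]
By condition (1), the right-hand supremum tends to $0$ as $C\to\infty$; choosing $C$ large (and exploiting the quantitative decay available in the applications, e.g.\ the polynomial bound in Proposition \ref{p-local}) I make $\sum_n\ell_n\sup_{y,s}\Pp^y(F_s>C\varphi(s))$ finite, so the first Borel--Cantelli lemma yields $F_{t_n}\le C\varphi(t_n/\log\log t_n)\log\log t_n$ for all but finitely many $n$, $\Pp^x$-a.s.

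Finally, for arbitrary $t$ large, I will pick $n$ with $t_{n-1}<t\le t_n$; monotonicity of $F$ gives $F_t\le F_{t_n}$, while doubling of $\varphi$ combined with $\log\log t_n\asymp\log\log t$ produces the comparison $\varphi(t_n/\log\log t_n)\log\log t_n \le C'\,\varphi(t/\log\log t)\log\log t$, and hence $\limsup_{t\to\infty} F_t/(\varphi(t/\log\log t)\log\log t) \le CC'$ almost surely. The main technical obstacle is the summability step: since (1) alone provides no rate, closing the Borel--Cantelli argument with a constant $C$ requires the additional quantitative decay of $\sup_{y,s}\Pp^y(F_s>C\varphi(s))$ that is available in every application of the proposition (Proposition \ref{p-local} for $L^*(t)$ and its analogue for $R(t)$); granting this, the adaptation of \cite[Theorem 3.1]{BK} is routine.
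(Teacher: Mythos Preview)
Your decomposition along a geometric subsequence $t_n=q^n$ into $\ell_n\asymp\log\log t_n$ pieces of length $s_n=t_n/\ell_n$ is the right starting point, but the summability step is a genuine gap, and your proposed fix (importing the polynomial rate from Proposition~\ref{p-local}) does not prove the proposition as stated: hypothesis (1) is purely qualitative, and the conclusion must follow from it alone.

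The missing idea, and the heart of the argument in \cite[Theorem~3.1]{BK}, is that hypothesis~(1) together with continuity of $F$, condition~(2) and the strong Markov property already \emph{self-improves} to a geometric tail bound. Fix $b_0$ with $p:=\sup_{x,t}\Pp^x(F_t\ge b_0\varphi(t))<1$, and for a given $t$ set $\sigma_0=0$, $\sigma_k=\inf\{s:\,F_s\ge kb_0\varphi(t)\}$. On $\{\sigma_k\le t\}$ one has, by continuity and~(2), $F_{t-\sigma_{k-1}}\circ\theta_{\sigma_{k-1}}\ge F_t-F_{\sigma_{k-1}}\ge b_0\varphi(t)\ge b_0\varphi(t-\sigma_{k-1})$, whence strong Markov gives $\Pp^x(\sigma_k\le t)\le p\,\Pp^x(\sigma_{k-1}\le t)$ and therefore
\[
\sup_{x,t}\Pp^x\bigl(F_t\ge kb_0\varphi(t)\bigr)\le p^{\,k}\qquad\text{for every }k\ge 1.
\]
This exponential estimate is what your union bound lacks. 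To close the LIL, run the same stopping-time scheme with threshold $b_0\varphi(s_n)$: set $\tau_0=0$, $\tau_k=\inf\{s>\tau_{k-1}:F_s-F_{\tau_{k-1}}\ge b_0\varphi(s_n)\}$. Then $\{F_{t_n}\ge Kb_0\varphi(s_n)\}\subset\{\tau_K\le t_n\}$, and each gap satisfies $\Pp^x(\tau_k-\tau_{k-1}<s_n\mid\mathcal F_{\tau_{k-1}})\le p$. Hence on $\{\tau_K\le t_n=\ell_n s_n\}$ at most $\ell_n$ of the $K$ gaps can exceed $s_n$, while stochastically at least $\mathrm{Bin}(K,1-p)$ of them do; taking $K=C'\ell_n$ with $C'(1-p)>1$ and using a Chernoff bound gives $\Pp^x(F_{t_n}\ge C'b_0\,\ell_n\varphi(s_n))\le e^{-c\ell_n}\asymp n^{-c}$, which is summable once $b_0$ (hence $p$) and $C'$ are chosen so that $c>1$. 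Borel--Cantelli then finishes exactly as you describe. In short, no rate from outside is needed; the point is to replace your pigeonhole union bound by the iterated stopping-time count.
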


\begin{remark}
Similar to the remark after the proof of \cite[Theorem 3.1]{BK}, Proposition \ref{U-LIL} can be used to derive upper bounds for LIL of $L^*(t)=\sup_{x\in M} l(x,t)$ and the range $R(t)=\mu(X([0,t]))$ of jump processes. Note that, in our setting the continuity of $L^*(t)$ is a consequence of Proposition \ref{p-local}, the strong Markov property and the Borel-Cantelli lemma; while one can use Theorem \ref{inf1} and the fact $R(t)\le c_1V\big(\sup_{0\le s \le t} d(X_s,x)\big)$ for all $t>0$ and some constant $c_1>0$ to obtain the continuity of $R(t).$  \end{remark}

\begin{proposition}
\label{space-c} Let  $(M,d,\mu)$ be a connected metric measure space such that
diam $M=\infty$ and
the volume doubling
condition holds, i.e. there exists $c_1>0$ such that
\[
\mu(B(x,2r))\le c_1\mu (B(x,r)),\quad x\in M, r>0.
\]
Then, for each $x_0\in M$ and $R>0$, there exists a sequence $\{A_i\}_{i=0}^\infty$ such that
each $A_i$ is a ball of radius $R$, $\lim_{i\to\infty}d(x_0,A_i)=\infty$, and the following hold:
\[
x_0\in A_0,~\,~~~ A_i\cap A_{i+1}\ne \emptyset~~\mbox{ for all }~i\in {\mathbb N},\,~~~
A_i\cap A_j=\emptyset ~~\mbox{ for all }~ |i-j|\ge 2.
\]
\end{proposition}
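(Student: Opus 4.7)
The plan is to construct the centers $z_i$ of $A_i:=B(z_i,R)$ inductively with $z_0=x_0$ along an $\epsilon$-chain escaping to infinity, using two preliminary consequences of connectedness. First, any two points of $M$ admit an $\epsilon$-chain (a finite sequence with consecutive distances $<\epsilon$) for every $\epsilon>0$: the relation ``joined by an $\epsilon$-chain'' is an open equivalence relation, and connectedness forces a single class. Second, $\rho(\cdot):=d(x_0,\cdot)$ is surjective onto $[0,\infty)$, since $\rho(M)$ is a connected subset of $[0,\infty)$ containing $0$ with unbounded supremum (using $\textrm{diam}\,M=\infty$).

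Fix $\epsilon\in(0,R/20)$, pick beacons $y_n$ with $\rho(y_n)=nR$ for $n\ge 1$, and concatenate $\epsilon$-chains $x_0\leadsto y_1\leadsto y_2\leadsto\cdots$ into an infinite $\epsilon$-chain $(p_j)_{j\ge 0}$ with $p_0=x_0$ and $\rho(p_j)\to\infty$. The basic extraction is greedy: set $z_0=x_0$ and, given $z_i=p_{k_i}$, take $k_{i+1}$ to be the smallest index $k>k_i$ satisfying $d(z_i,p_k)\ge 3R/2$. The $\epsilon$-chain property then yields $d(z_i,z_{i+1})\in[3R/2,3R/2+\epsilon)\subset[3R/2,2R)$, so $A_i\cap A_{i+1}\ne\emptyset$.

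The crux of the proof is to upgrade the greedy selection to ensure simultaneously $d(z_i,z_j)\ge 2R$ for all $|i-j|\ge 2$ and $d(x_0,A_i)\to\infty$; neither is automatic from the basic greedy rule, since an $\epsilon$-chain in a general connected metric space may fold back. I would impose the additional constraint $\rho(z_{i+1})\ge \rho(z_i)+R$ at each step. Under this refinement one has $\rho(z_j)-\rho(z_i)\ge (j-i)R$, and the reverse triangle inequality then gives $d(z_i,z_j)\ge (j-i)R\ge 2R$ whenever $|i-j|\ge 2$, while $\rho(z_i)\to\infty$ yields $d(x_0,A_i)\to\infty$.

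The main obstacle is realizing this refinement: at each inductive step I need a $p_k$ on the chain satisfying both $d(z_i,p_k)\in[3R/2,2R)$ and $\rho(p_k)\ge \rho(z_i)+R$. I would arrange this by choosing the next beacon $y^\ast$ with $\rho(y^\ast)=\rho(z_i)+2R$, so that any $\epsilon$-chain from $z_i$ to $y^\ast$ must raise $\rho$ by at least $2R$ and, by the reverse triangle inequality, satisfies $d(z_i,y^\ast)\ge 2R$, hence exits the ball $B(z_i,3R/2)$. I then invoke the volume doubling assumption on the shell $\{3R/2\le d(z_i,\cdot)<2R\}$ to preclude the degenerate case in which every chain's first exit from $B(z_i,3R/2)$ lies entirely in the low-$\rho$ part of the shell $\{\rho<\rho(z_i)+R\}$; the point is that this ``low-$\rho$'' region cannot consume the full doubling-controlled measure of the shell, leaving room to route the chain through a point of the shell with $\rho\ge \rho(z_i)+R$. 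Once this local refinement is in place, the inductive construction proceeds and all three required properties follow.
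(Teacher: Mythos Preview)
Your approach diverges from the paper's and contains a genuine gap at the step you yourself flag as the ``main obstacle.'' You want, given $z_i$, a point $p$ with $d(z_i,p)\in[3R/2,2R)$ \emph{and} $\rho(p)\ge\rho(z_i)+R$, and you appeal to volume doubling on the shell to rule out the possibility that every accessible shell point has small $\rho$. But volume doubling gives no such conclusion: it controls ratios of ball volumes, not how the function $\rho=d(x_0,\cdot)$ distributes over a shell centred at $z_i$. Concretely, take $M=[0,\infty)$ with a segment of length $10$ glued at the point $5$, path metric, $1$-dimensional Hausdorff measure, $x_0=0$, $R=1$. This is connected, unbounded, and volume doubling. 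Your rule allows (indeed, after $z_4$ lands on the attached segment, \emph{forces}) the $z_i$ to march out the dead-end: once $z_5$ is on the segment, the only points at distance in $[3/2,2)$ with $\rho\ge\rho(z_5)+1$ lie further along the segment, and after a few steps $z_i$ reaches the tip. At that stage every point within $2R$ of $z_i$ has $\rho<\rho(z_i)$, so no admissible $z_{i+1}$ exists. Your measure-of-the-shell heuristic does not detect this; the low-$\rho$ region genuinely exhausts the shell.

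The paper's argument avoids this trap by a different mechanism. It first takes an open cover $\{B(z_i,R)\}$ of $M$ with bounded overlap $N_0$ (this is where doubling enters). Calling a subset of centres \emph{linked} if its $R$-balls form a chain, the paper shows that for every $k$ there is an \emph{infinite} maximal linked set $G_k\subset\{z_i\}\cap B(x_0,4kR)^c$: if not, connectedness would produce infinitely many disjoint finite maximal linked components each of which must touch $\partial B(x_0,4kR)$, yielding infinitely many disjoint $R$-balls near that sphere --- impossible under doubling. One then threads a chain through the nested $G_k$'s and applies a ``maximal jump'' refinement to enforce the non-adjacency disjointness. The key point is that doubling is used globally (to bound how many disjoint components can abut a fixed ball), not locally on a shell around the current point; this is what lets the construction escape dead-ends.
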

\begin{proof}
First, by \cite[Lemma 3.1 (i)]{KSt}, there exists a constant $N_0\in {\mathbb N}$ such that
for each $R>0$, there exists an open covering $\{B(z_i,R)\}_{i=0}^\infty$ of $M$ with the property
that no point in $M$ is more than $N_0$ of the balls. We say a subset $\Lambda$ of
$\{z_i\}_i$ is linked if for each $z_i,z_j\in \Lambda$, there is a chain $z^0=z_i, z^1,\cdots, z^l=z_j\in \Lambda$ such that $z^k\sim z^{k+1}$ (by which we mean $B(z^k,R)\cap B(z^{k+1},R)\ne \emptyset$) for all $k=0,1,\cdots, l-1$.
Take $x_0\in M$. We may assume without loss of generality that
$x_0=z_0$. For each $k\in {\mathbb N}$, we may take a linked set $G_k\subset\{z_i\}_i\cap B(x_0, 4kR)^c$
such that $\sharp G_k=\infty$. (Indeed, if there is no such linked sets, then because diam $M=\infty$ and $M$ is connected, there are infinite number of mutually disjoint and non-empty linked sets $\{L_j\}$
such that
$\sharp L_j<\infty$ and
$L_j\subset\{z_i\}_i\cap B(x_0, 4kR)^c$. We may assume that each $L_j$ is maximal
(i.e. no elements in $\{z_i\}_i\cap B(x_0, 4kR)^c\cap L_j^c$ is linked to $L_j$). Because
$M$ is connected, from each $L_j$, there exists $\hat x_j\in L_j$ such that
$B(\hat x_j,R)\cap B(x_0, 4kR)\ne \emptyset$. By construction, $\{B(\hat x_j,R)\}_j$ are mutually disjoint, but this contradicts to the volume doubling assumption.)
We fix one such a linked set $G_k$ which is maximal; we may choose $G_k\supset G_{k+1}\supset \cdots$.
Set $G_0=\{z_i\}_i$.

We now construct a desired chain inductively that contains a sequence $\{z_{m_k}\}_{k=0}^\infty\subset \{z_i\}$. Take $z_{m_0}=x_0$. For each $k\ge 0$, given $z_{m_k}\in G_k\cap B(x_0, (4k+2)R)$,
take a chain $y^k_0=z_{m_k},y^k_1,\cdots, y^k_{s_k}$ such that $y^k_i\sim y^k_{i+1}$ for $i=0,\cdots s_k-1$,
$y^k_j\in G_k\setminus G_{k+1}$, $j=0,\cdots s_k-1$ and $y^k_{s_k}=:z_{m_{k+1}}\in G_{k+1}$. Then it holds
that $z_{m_{k+1}}\in B(x_0, (4(k+1)+2)R)$. Now let $\tilde y^k_0=y^k_0$
and define $\tilde y^k_i$, $i\ge 1$ inductively as the maximum $j$ such that $y^k_j\sim
\tilde y^k_{i-1}$.
Then we have a sequence $\tilde y^k_0=z_{m_k}, \tilde y^k_1, \cdots, \tilde y^k_{s_k'}=z_{m_{k+1}}$ such that
$\tilde y^k_i\sim \tilde y^k_{i+1}$ and $\tilde y^k_i\not\sim\tilde y^k_j$ if $|i-j|\ge 2$.
By doing this procedure iteratively, and doing the same procedure (i.e. procedure to produce
$\{\tilde y^k_i\}$ from $\{y^k_i\}$) again
for each adjacent sequences (this is necessary because the sequences of balls made
by the adjacent sequences $\{\tilde y^k_0=z_{m_k}, \tilde y^k_1, \cdots, \tilde y^k_{s_k'}=z_{m_{k+1}}\}$
and $\{\tilde y^{k+1}_0=z_{m_{k+1}}, \tilde y^{k+1}_1, \cdots, \tilde y^{k+1}_{s'_{k+1}}=z_{m_{k+2}}\}$ could overlap many times),
we have the desired chain.
\end{proof}

\bigskip

{\bf Acknowledgement.}
Our first proof of Proposition \ref{thm:estlochold} was under assumption of some scaling property on the space.
We thank D.\ Croydon, C.\ Nakamura and Y.\ Shiozawa  for useful comments, 
and we also thank Professor Zhen-Qing Chen for the proof of Proposition \ref{con}. The authors are also indebted to two referees for their helpful comments and careful corrections.

\vskip 0.1truein

{\footnotesize {\bf Panki Kim}

Department of Mathematical Sciences and Research Institute of Mathematics,

Seoul National University,
Building 27, 1 Gwanak-ro, Gwanak-gu,
Seoul 08826,
Republic of Korea

E-mail: pkim@snu.ac.kr

\bigskip

{\bf Takashi Kumagai}

Research Institute for Mathematical Sciences,
Kyoto University, Kyoto 606-8502, Japan

E-mail: kumagai@kurims.kyoto-u.ac.jp

\bigskip

{\bf Jian Wang}

School of Mathematics and Computer Science, Fujian Normal University, 350007 Fuzhou,

P.R. China.

E-mail: jianwang@fjnu.edu.cn
}
\vfill


\begin{thebibliography}{99}
\bibitem{ADS}
Aurzada, F., D\"{o}ring, F. and Savov, M.: Small time Chung-type LIL for L\'{e}vy processes, \emph{Bernoulli}
\textbf{19} (2013), 115--136.

\bibitem{Ba1}
Barlow, M.T.: Diffusions on fractals,
\emph{Lect. Notes in Math.} {\bf 1690},
Ecole d'\'{e}t\'{e} de probabilit\'{e}s de Saint-Flour XXV--1995,
Springer, New York 1998.

\bibitem{bbcan} Barlow, M.T. and Bass, R.F.:
Brownian motion and harmonic analysis on Sierpinski carpets,
\emph{Canadian Journal of Math.} \textbf{51} (1999), 673--744.

\bibitem{baba3}
Barlow, M.T. and Bass, R.F.:
Transition densities for Brownian motion on the Sierpinski carpet,
\emph{Probab. Theory Relat. Fields} \textbf{91}
(1992), 307--330.



\bibitem{BBK}
Barlow, M.T., Bass, R.F. and Kumagai, T.: Parabolic Harnack inequality and heat kernel estimates for random walks with long range jumps, \emph{Math. Z.} \textbf{261} (2009), 297--320.

\bibitem{BGK} Barlow, M.T., Grigor'yan A. and Kumagai, T.: {Heat kernel upper bounds for jump processes and the first exit time,}
 \emph{J. Reine Angew. Math.\ } \textbf{626} (2009), 135--157.

\bibitem{bp}
Barlow, M.T. and Perkins, E.A.:
Brownian motion on the Sierpinski gasket,
\emph{Probab. Theory Relat. Fields} {\bf 79} (1988), 543--624.



\bibitem{BK} Bass, R.F. and Kumagai, T.:
Laws of the iterated logarithm for symmetric diffusion processes,
\emph{Osaka J. Math.}
 \textbf{37} (2000), 625--650.


\bibitem{BulG}
Blumenthal, R.M. and Getoor, R.K.:\ \emph{Markov Processes and Potential Theory},
Academic Press, Reading 1968.


\bibitem{BM}
Buchmann, B. and Maller, R.: The small-time Chung-Wichura law for L\'{e}vy processes with non-vanishing Brownian component, \emph{Probab. Theory Relat. Fields}
\textbf{149} (2011), 303--330.

\bibitem{BMM}
Buchmann, B., Maller, R. and Mason, D.: Laws of the iterated logarithm for self-normalised L\'{e}vy processes at zero, \emph{Trans. Amer. Math. Soc.}
\textbf{367} (2015), 1137--1770.

\bibitem{CKK3} Chen, Z.-Q., Kim, P. and Kumagai, T.:
On heat kernel estimates and parabolic Harnack inequality for jump processes on metric measure spaces,
\emph{Acta Mathematica Sinica, English Series}
\textbf{25} (2009), 1067--1086.

\bibitem{CK}
Chen, Z.-Q. and Kumagai, T.: Heat kernel estimates for stable-like processes on $d$-sets,
\emph{Stoch. Proc. Appl.} \textbf{108} (2003), 27--62.

\bibitem{CK1} Chen, Z.-Q. and Kumagai, T.:
Heat kernel estimates for jump processes of mixed types on metric measure spaces,
\emph{Probab. Theory Relat. Fields} \textbf{140} (2008), 277--317.

\bibitem{CKW} Chen, Z.-Q., Kumagai, T. and Wang, J.:
Stability of heat kernel estimates and parabolic Harnack inequalities for jump processes on metric measure spaces, in preparation.

\bibitem{Cro}
Croydon, D.:
Moduli of continuity of local times of random walks on graphs in terms of the resistance metric,
{\it Trans. London Math. Soc.}
\textbf{2} (2015), no.\ 1, 57--79.

\bibitem{DV}
 Donsker, M.D. and   Varadhan, S.R.S.: On laws of the iterated logarithm for local
times, {\it Comm. Pure Appl. Math.} \textbf{30} (1977), 707--753.

\bibitem{Du} Dupuis, C.:
Mesure de Hausdorff de la trajectoire de certains processus \`{a} accroissenments ind\'{e}pendants et stationnaires, in:\ \emph{Lect. Notes in Math.} {\bf 381}, S\'{e}minaire de Prohabiliti\'{e}s VIII (1972/73), Springer, Berlin, 1974, pp. 40--77.


\bibitem{FOT}
Fukushima, M.,  Oshima, Y. and Takeda, M.:
\emph{Dirichlet Forms and Symmetric Markov Processes}.
de Gruyter, Berlin, 2nd rev. and ext. ed., 2011.


\bibitem{fst}
Fukushima, M., Shima, T. and Takeda, M.:
Large deviations and related LIL's for Brownian motions on nested fractals,
\emph{Osaka J. Math.} \textbf {36} (1999), 497--537.


\bibitem{Gar}
Garsia, A.M.: Continuity properties of multi-dimensional Gaussian processes, 6th Berkeley Symposium
on Math.\, in: \emph{Statistical Probability}, vol.\ {\bf 2}, pp. 369--374. Berkeley: University of California
Press 1970.


\bibitem{GK}
Getoor, R.K. and  Kesten, H.:
Continuity of local times for Markov processes,
\emph{Compositio Math.} \textbf{24} (1972), 277--303.


\bibitem{Gr} Griffin, P.S.:
Laws of the iterated logarithm for symmetric stable processes,
\emph{Probab. Theory Relat. Fields} \textbf{68} (1985), 271--285.




\bibitem{GH11} Grigor'yan, A.  and  Hu, J.:
Upper bounds of heat kernels on doubling spaces,
{\em Mosco Math. J. \bf 14} (2014), 505--563.

\bibitem{H}
Heinonen, J.:
\emph{Lectures on Analysis on Metric Spaces,}
Springer-Verlag, New York 2001.




\bibitem{Kig12}
 Kigami, J.:
Resistance forms, quasisymmetric maps and heat kernel estimates,
{\em Mem. Amer. Math. Soc. \bf 216} (2012), no.\ 1015, vi+132 pp.

\bibitem{Kh}
Khinchin,  A.: \"{U}ber einen Satz der Wahrscheinlichkeitsrechnung,  \emph{Fundamenta Mathematica} \textbf{6} (1924), 9--20.

\bibitem{Kh1}
Khinchin,  A.: Zwei S\"{a}tze \"{u}ber stochastische prozess mit stabilen verteilungen,
{\em Mat. Sbornik} \textbf{3} (1938), 577--594.


\bibitem{KS}
Knopova, V. and Schilling, R.: On the small-time behavior of L\'{e}vy-type processes,
\emph{Stoch. Proc. Appl.} \textbf{124} (2014), 2249--2265.


\bibitem{TKjump}
Kumagai, T.:
Some remarks for stable-like jump processes on fractals,
{\em Fractals in {G}raz} 2001, Trends Math., Birkh\"auser, Basel, 2003, pp. 185--196.


\bibitem{K14}
Kumagai, T.:
Random walks on disordered media and their scaling limits,
\emph{Lect. Notes in Math. {\bf 2101}},
Ecole d'\'{e}t\'{e} de probabilit\'{e}s de Saint-Flour XL--2010,
Springer, New York 2014.

\bibitem{KSt}
Kumagai, T. and Sturm, K.-T.:
Construction of diffusion processes on fractals, $d$-sets, and general metric measure spaces,
\emph{J. Math. Kyoto Univ.} {\bf 45} (2005), 307--327.

\bibitem{MR} Marcus, M.B. and Rosen, J.:
Sample path properties of local times for strongly symmetric Markov processes via Gaussian processes,
\emph{Ann. Probab.} \textbf{20} (1992), 1603--1684.



\bibitem{MR1} Marcus, M.B. and Rosen, J.:
\emph{Markov Processes, Gaussian Processes, and Local Times},
 Cambridge Univ. Press, Cambridge 2006.

\bibitem{P}
Petrov, V.V.:
A note on the Borel-Cantelli lemma,
\emph{Stat. Prob. Lett.} \textbf{58} (2002), 283--286.


\bibitem{sato}
Sato, K.:
{\em L\'evy Processes and Infinitely Divisible Distributions,}
Cambridge Univ. Press, Cambridge 1999.


\bibitem{Sav}
Savov, M.: Small time two-sided LIL behavior for L\'{e}vy processes at zero, \emph{Probab. Theory Relat. Fields}
\textbf{144} (2009), 79--98.

\bibitem{Si}
Simon, B.: Schr\"{o}dinger semigroups, \emph{Bull. Amer. Math. Soc.}
\textbf{7} (1982), 447--526.

\bibitem{T}
Taylor, S.J.:
Sample path properties of a transient stable process,
{\em J. Math. Mech.} {\bf 16} (1967) 1229--1246.


\bibitem{Wee1} Wee, I.S.:
Lower functions for processes
with stationary independent increments,
\emph{Probab. Theory Relat. Fields}
\textbf{77} (1988), 551--566.

\bibitem{Wee} Wee, I.S.:
The law of the iterated logarithm for local time of a L\'{e}vy proces,
\emph{Probab. Theory Relat. Fields}
\textbf{93} (1992), 359--376.



\bibitem{Xi}
Xiao, Y.:
Random fractals and Markov processes, in:\ \emph{Fractal Geometry and Applications: A Jubilee of Beno\^{i}t Mandelbrot}, Part II, Sympos. Pure Math.\ {\bf 72},  Amer. Math. Soc., Providence, 2004, pp. 261--338.

\bibitem{Y}
Yan, J.-A.: A simple proof of two generalized Borel-Cantelli lemmas, \emph{Lect. Notes in Mathe.}  \textbf{1874} (2006), 77--79.






\end{thebibliography}
\end{document}